%% file: paper.tex
\documentclass{article}
\usepackage{amsmath,amssymb}
\usepackage{enumerate,a4}
\usepackage{xcolor,colortbl}
\usepackage{hyperref,color}
\usepackage{graphicx, here}
\usepackage{multicol}
\usepackage[font=footnotesize,labelfont=bf]{caption}
\usepackage{booktabs}
\usepackage{multirow}
\usepackage{tabularx}
\usepackage{siunitx}
\usepackage{tikz}
\usepackage{amsthm}
\usepackage{amsmath}
\usepackage{adjustbox}
\usepackage{comment}
\usepackage{arydshln}
\usepackage{verbatim}
\usepackage{afterpage}
\usepackage{algpseudocode}
\usepackage{array}
\usepackage{arydshln}
\usepackage[numbers,sort&compress]{natbib}

 \usepackage{pgfplots}
 \usepgfplotslibrary{groupplots}

\usepackage[ruled,vlined,linesnumbered]{algorithm2e}
\SetKwProg{Fn}{Function}{:}{}

\usepackage{graphicx}

\theoremstyle{plain}
\newtheorem{theorem}{Theorem}
\newtheorem{proposition}[theorem]{Proposition}
\newtheorem{lemma}[theorem]{Lemma}

\theoremstyle{definition}
\newtheorem{definition}[theorem]{Definition}

\newtheoremstyle{remarkstyle}
  {3pt}   
  {3pt}   
  {\normalfont} 
  {}      
  {\itshape} 
  {.}     
  {.5em}  
  {\thmname{#1}~\thmnumber{#2}\thmnote{~(#3)}}

\theoremstyle{remarkstyle}
\newtheorem{remark}[theorem]{Remark}

\DeclareMathOperator*{\argmin}{arg\,min}

\title{Improving Upper Bounds for the Maximum Clique Problem using Reduction Rules}
\author{
	{Alja\v z Krpan} \thanks{Faculty of mathematics and physics, University of Ljubljana, Slovenia {\tt krpan.aljaz@gmail.com}} \thanks{Rudolfovo, Science and Technology Centre Novo mesto, Slovenia}
	\and {Janez Povh} \thanks{ Rudolfovo, Science and Technology Centre Novo mesto, Slovenia {\tt janez.povh@rudolfovo.eu}}
    }
	
\date{}

\begin{document}

\newcommand{\Arrow}[1]{
    \parbox{.15cm}{\tikz{\draw[->](0,0)--(.15cm,0);}}
}

\maketitle

\begin{abstract}
    \noindent

    We study the interaction between reduction rules and upper-bound functions for the Maximum Clique Problem (MCP). We show how MCP upper-bound functions can strengthen classical core and truss reductions by replacing local size conditions with upper-bound tests. This leads to the \((k,\omega^u)\)-core, the \((k,\omega^u)\)-truss, and the more general \((k,d,\omega^u)\)-truss, where the parameter \(d\) controls the trade-off between stronger reductions and additional computational cost. For each of these notions, we prove clique-preservation properties, correctness of the corresponding peeling algorithm, and running-time bounds. Based on these reductions, we introduce a general framework for improving upper-bound values for MCP. We give two concrete instantiations of the framework: one that uses only the combined truss and core reductions, and one that combines the truss and core reductions with repeated applications of structions. Computational experiments on 73 benchmark graphs show that the proposed reductions can substantially improve several standard upper-bound functions and that combining multiple reduction methods can be beneficial in practice. In particular, the combination of structions, truss and core reductions with a DSatur-based bound often reached SDP-level upper-bound values faster than direct SDP computation; on the tested graphs with edge density below \(0.7\), it did so in every case. Using the truss and core reduction with the Lov\'asz theta upper-bound function, we also improve the previously best certified integer upper-bound values for three difficult DIMACS instances whose exact clique numbers are not known. In particular, we improve upper-bound values for graph
    \texttt{C500.9} from 83 to 73, for graph \texttt{C1000.9} from 122 to 115, and for graph \texttt{C2000.9} from 177 to 168.
    \\[1mm]
    \noindent\textbf{Keywords:} Maximum Clique Problem; upper bounds; reductions; core decomposition; truss decomposition; structions; DIMACS benchmarks.\\[1mm]
    \noindent\textbf{MSC 2020:} Primary 05C85; Secondary 05C69, 90C27, 68R10, 68W40.
\end{abstract}

\section{Introduction}\label{intro}

\subsection{Motivation}
The \emph{Maximum Clique Problem} (MCP) is among the most extensively studied problems in graph theory and combinatorial optimization. Given a graph \(G\), the MCP asks to determine the clique number \(\omega(G)\), that is, the size of a largest subset of pairwise adjacent vertices. In the constructive variant, one also asks to output such a clique. Since MCP is NP-hard~\cite{Karp1972}, it has received sustained attention in both theoretical and applied research. Practical instances arise in numerous areas, including bioinformatics (e.g., protein--protein interaction prediction)~\cite{yang2014ppi}, social network analysis (e.g., community detection)~\cite{goswami2022community}, and structural biology or drug design (e.g., local structure alignment)~\cite{lee2016glosa}.

Since MCP is NP-hard, practical algorithms often rely on techniques that reduce the search space. Two important tools for this purpose are reduction rules and upper-bound functions. Reduction rules simplify the input graph while preserving the clique sizes relevant to the problem, and are widely used in preprocessing, exact solvers, and local-search algorithms~\cite{SANSEGUNDO201681,Walteros2020CliqeEasy,Dahlum2016AcceleratingLocalSearch}. On the other hand, upper-bound functions certify that a graph cannot contain a clique larger than a certain value, without necessarily finding a maximum clique. A common example of this is pruning in branch-and-bound algorithms.

In this paper, we study the interaction between reductions and upper-bound functions. On one hand, upper-bound functions can make reductions stronger. For example, the usual \(k\)-core and \(k\)-truss reductions use neighborhood sizes as simple upper bounds when deciding which vertices or edges to keep. We replace these simple estimates by general MCP upper-bound functions, which can make the reductions more selective. On the other hand, reductions can improve upper-bound values. If a reduction preserves all cliques of a target size, then computing an upper bound on the reduced graph can certify that the original graph has no clique of that size. Since the reduced graph is usually smaller or structurally simpler, the upper bound may also be tighter or easier to compute. Together, these observations show that reductions and upper-bound functions can interact in useful ways. In this paper, we use this connection, on one hand, to strengthen reduction rules and, on the other hand, to design reduction-based procedures for improving upper-bound values.

\subsection{Our contribution}
In this paper, we study the relationship between reduction rules and upper-bound functions for MCP. We analyze how tighter upper-bound values can produce stronger reductions and how reduction rules can, in turn, improve existing upper-bound values. Based on this relationship, we propose a general framework for improving MCP upper-bound values. We support our theoretical results with an extensive computational study.

The main contributions of this work are summarized as follows:

\begin{itemize}
    \item \textbf{Strengthening reduction rules using upper-bound functions:}
          We show how existing upper-bound functions can be used inside the \(k\)-core and \(k\)-truss reductions. For cores, this fits into the known framework of generalized cores~\cite{Batagelj2011GeneralizedCores}; the contribution here is to use this viewpoint with MCP upper-bound functions, giving the \((k,\omega^u)\)-core, and to connect it to clique-preserving reductions used for upper-bound value improvement. For trusses, we transfer the same idea from vertices to edges, obtaining the \((k,\omega^u)\)-truss, which can provide stronger reductions. We further extend the \((k,\omega^u)\)-truss to the \((k,d,\omega^u)\)-truss. The additional parameter \(d\) allows us to trade additional computational time for stronger reductions. For these three notions, we prove clique-preservation properties, correctness of the corresponding peeling algorithms, and running-time bounds.

    \item \textbf{Improving MCP upper-bound values using reduction rules:}
          We propose a general algorithmic framework for improving upper-bound values for MCP using reduction rules. This framework is an important contribution because it gives a systematic way to identify and construct new upper-bound improvement algorithms from suitable reductions and upper-bound functions. We demonstrate this by giving two concrete instantiations, one based on truss and core reductions and one combining truss and core reductions with a known reduction called structions. By comparing the two instantiations, we observe that combining different types of reductions can be beneficial in practice.

    \item \textbf{Algorithmic development and empirical evaluation:}
          We implement the proposed reductions and upper-bound improvement framework as practical algorithms. We conduct an extensive computational study on 73 benchmark graphs, showing that the proposed reductions can substantially improve existing upper-bound values. Across all benchmark graphs, this instantiation often reaches SDP-level bounds faster than computing the corresponding SDP bound directly; on graphs with density below \(0.7\), it does so in all tested cases. The strongest experimental outcome is that, using the truss and core instantiation with the Lov\'asz theta upper-bound function, we improved the previously best known upper-bound values for three difficult DIMACS instances whose optimal clique numbers are not known.

\end{itemize}

\subsection{Notation}

In this paper, we let \(\mathcal{G}\) denote the class of all finite, simple, undirected graphs. We use the convention \(\mathbb{N}_0=\{0,1,2,\ldots\}\).

Throughout the paper, \(G=(V(G),E(G))\) denotes an element of \(\mathcal{G}\), where \(|V(G)|=n\) and \(|E(G)|=m\). When the graph is clear from the context, we simply write \(G=(V,E)\). We write \(\overline{uv}\) for the edge joining vertices \(u\) and \(v\). For a vertex \(v\in V(G)\), its neighborhood in \(G\) is
\[
    N_G(v):=\{u\in V(G) \mid \overline{uv}\in E(G)\}.
\]
When \(G\) is clear from the context, we write \(N(v)\) instead of \(N_G(v)\).

We write \(H\subseteq G\) if \(H\) is a subgraph of \(G\). For \(S\subseteq V(G)\), we denote by \(G[S]\) the subgraph of \(G\) induced by \(S\). The graph obtained by deleting a vertex \(v\) is denoted by \(G\setminus v\), and the graph obtained by deleting an edge \(\overline{uv}\) is denoted by \(G\setminus \overline{uv}\). The complement of \(G\) is denoted by \(\overline{G}\).

We assume that every graph considered in this paper has vertices labeled by distinct natural numbers, and we use the natural order of these labels as the fixed vertex ordering. This assumption is without loss of generality, since any finite graph can be labeled in this way. Subgraphs inherit the labels and hence the ordering. If \(v\in V(G)\), we define the forward neighborhood of \(v\) by
\[
    N_G^+(v):=\{u\in N_G(v)\mid v<u\}.
\]
When \(G\) is clear from the context, we write \(N^+(v)\) instead of \(N_G^+(v)\). If a different ordering is used, this will be stated explicitly.

Throughout the paper, \textsc{pick\_and\_remove}\((Q)\) denotes an operation that removes and returns an element from a given set \(Q\). In the analysis of our algorithms, we assume that this operation takes \(\mathcal{O}(1)\) time, which can be achieved, for example, by using a stack or a queue.

\subsection{Outline}
The rest of the paper is organized as follows. In Section~\ref{sec:related_work}, we discuss selected related work on reduction techniques, upper-bound functions, and methods for improving upper-bound values. In Section~\ref{sec:upper_bounds}, we recall the MCP upper-bound functions used later in the paper. Sections~\ref{sec:core} and~\ref{sec:truss} introduce the \((k,\omega^u)\)-core, the \((k,\omega^u)\)-truss, and the \((k,d,\omega^u)\)-truss, together with results on existence, uniqueness, clique preservation, and correctness and complexity of the corresponding algorithms. The longer correctness and complexity proofs for the \((k,d,\omega^u)\)-truss are given separately in Section~\ref{sec:truss-correctness-complexity}. In Section~\ref{sec:practical_reductions}, we describe the practical reduction procedures used in the experiments, including the combined truss and core reduction and the repeated use of structions. Section~\ref{sec:improving_bounds} presents the general framework for improving MCP upper-bound values using reductions, together with two concrete instantiations. In Section~\ref{sec:comp_results}, we evaluate these instantiations on several upper-bound functions and benchmark graphs, and we also report improved upper-bound values for three DIMACS instances whose exact clique numbers are not known. Finally, Section~\ref{sec:conclusions} summarizes the main findings and discusses directions for future work.

\section{Related Work}\label{sec:related_work}
This section is not intended to be a comprehensive survey. Instead, we discuss selected works on reduction techniques, upper-bound functions, and methods for improving upper-bound values that are closely related to the topics considered in this paper. For a broader survey of recent approaches to MCP, we refer the reader to~\cite{marino2024shortreviewnovelapproaches}.

\subsection{Reduction techniques}

Reduction techniques for MCP are typically studied alongside those for the Maximum Independent Set (MIS) and Minimum Vertex Cover (MinVC) problems, since standard complement-based transformations yield polynomial-time equivalences between these three problems. A comprehensive survey of reduction techniques for the maximum weighted independent set problem is provided in~\cite{grosmann2024comprehensive}. Through the standard complement transformation between MCP and MIS, many of these reductions are also relevant to MCP. Although that survey focuses on weighted variants, its results directly apply to the unweighted setting by assigning weight~1 to every vertex.

The comprehensive survey on MCP contains an extensive list of reduction and kernelization techniques for MCP and related problems~\cite[Section~2.2]{marino2024shortreviewnovelapproaches}. One work from this line of research that we want to highlight is~\cite{Hespe2017scalable}, which develops a scalable preprocessing framework for the Maximum Independent Set problem by combining several reduction rules. The efficiency of their approach comes not only from parallelization, but also from implementation techniques such as dependency checking and reduction tracking, which avoid unnecessary reduction tests and stop kernelization when further reductions become less productive. A different application-oriented use of preprocessing appears in~\cite{math10050697}, where MCP is used to solve a scheduling problem and a large number of simple preprocessing techniques is applied to the resulting MCP instances before running the solver. Reduction rules were also used dynamically inside solving procedures, rather than only as preprocessing, for example in branch-and-reduce algorithms for the related Minimum Vertex Cover problem~\cite{akiba2016branchreduce}.

Two concepts that are often used as reduction rules are the \(k\)-core and the \(k\)-truss. The \(k\)-core was introduced in~\cite{Seidman1983network} and has since become a standard tool in network analysis. For broader discussions of \(k\)-core decompositions, we refer the reader to~\cite{Kong2019kcoreTheories,Sariyuce2016kcore,Malliaros2020CoreDecomposition}. In MCP, degeneracy and core-based bounds are used to reduce the search space in exact algorithms~\cite{SANSEGUNDO201681,Walteros2020CliqeEasy}. Generalized cores, based on vertex property functions, were introduced in~\cite{Batagelj2011GeneralizedCores}. The \((k,\omega^{u})\)-core used in this paper can be viewed as an MCP-specific instance of this framework when \(\omega^{u}\) is increasing.

The \(k\)-truss is a triangle-based cohesive subgraph notion. A closely related notion, called \(k\)-dense subgraphs, was introduced by Saito and Yamada~\cite{Saito2006_kDense}. The \(k\)-truss model was later introduced by Cohen~\cite{Cohen2008trusses}. Similar to \(k\)-core, the \(k\)-truss is also used as a reduction tool in MCP algorithms, for example in~\cite{Lu2017Massive}. Efficient truss-decomposition algorithms and implementations have been studied in~\cite{Wang2012TrussDecomposition} and \cite{Kabir2017ParallelTruss}. A general clique-based framework is given by nucleus decompositions~\cite{Sariyuce2017Nucleus}. In this framework, lower-order cliques are required to be contained in sufficiently many higher-order cliques, and both the \(k\)-core and the \(k\)-truss appear as special cases. Other generalizations of truss decompositions also exist, for example generalizations based on higher-order neighborhoods~\cite{Chen2022HigherOrderTruss}.

Our \((k,\omega^{u})\)-truss and \((k,d,\omega^{u})\)-truss are different from nucleus decompositions and from these other truss generalizations. Our \((k,\omega^{u})\)-truss and \((k,d,\omega^{u})\)-truss differ from existing truss generalizations because they incorporate an increasing upper-bound function. Together with the corresponding core-based notions, they yield reductions for improving MCP upper bounds.
\subsection{Improving upper-bound values}

Improved upper-bound values for MCP are one of the main contributions of our paper. The work most closely related to ours is the study~\cite{GENDRON2008Sequential} from 2008, where the Sequential Elimination Algorithm (SEA) was introduced. Given any upper-bound function, SEA iteratively removes vertices according to upper bounds computed on their neighborhoods and updates a global upper-bound value for \(\omega(G)\). In~\cite{Curzi2012RepeatedSequential}, the authors build directly on this framework and propose a repeated version of SEA, in which the original sequential scheme is applied multiple times on suitable subgraphs, yielding empirically stronger upper-bound values. A closely related formulation was later given in~\cite{szabo2021discarding} under the name DISCARDING. It uses the same neighborhood-based elimination principle to construct a vertex deletion order and derive a global upper bound, with particular emphasis on coloring-based bounds and parallel implementation.

Other related work mainly focuses on strengthening particular upper-bound functions by exploiting their specific structural properties. For example, the coloring upper-bound function has been strengthened by infra-chromatic upper bounds~\cite{SANSEGUNDO2015Infra, SANSEGUNDO2016InfraImproved}, which can yield tighter upper-bound values. Another line of research aims to use upper-bound information more efficiently within branch-and-bound solvers by reusing previously computed information or by strengthening the pruning step; examples include incremental MaxSAT reasoning~\cite{Jiang2016EfficientPreprocessing} and the incremental upper bound IncUB proposed in~\cite{Li2018Inc}.

For bounds themselves, a wide variety of results for the clique number and the independence number has been proposed in the literature. For overviews of different families of such bounds, we refer the reader to~\cite{Brimkov2024BoundsCertainClasses, larson2012alphabounds, Margenov2019ApplicationsUltrascale, Willis2011Bounds}.

\section{Upper-bound functions for the Maximum Clique Problem}\label{sec:upper_bounds}

In this section, we first formally define the Maximum Clique Problem (MCP). We then introduce several upper-bound functions for its optimal value, which will be used later in the paper.

\begin{definition}
    Let \(G=(V,E)\in\mathcal{G}\). A set \(S\subseteq V\) is a clique in \(G\) if every two distinct vertices in \(S\) are adjacent, that is,
    \[
        \overline{uv}\in E \qquad \text{for all distinct } u,v\in S.
    \]
    The clique number of \(G\) is defined as
    \[
        \omega(G) := \max_{\substack{S\subseteq V \\ S \text{ is a clique}}} |S|.
    \]
    The Maximum Clique Problem asks for a clique \(S\subseteq V\) such that \(|S|=\omega(G)\).
\end{definition}

We now define upper-bound functions and upper-bound values for the clique number.
\begin{definition}
    \label{def:upper_bound}
    A function \(\omega^u:\mathcal{G}\rightarrow\mathbb{N}_0\) is an MCP upper-bound function if, for every graph \(G=(V,E)\in\mathcal{G}\),
    \[
        \omega(G) \leq \omega^u(G) \leq |V|.
    \]
    Any integer \(U\) satisfying \(\omega(G)\leq U\leq |V|\) is called an upper-bound value on \(\omega(G)\).
\end{definition}

The restriction \(\omega^u(G)\in\mathbb{N}_0\) is assumed without loss of generality. If an upper-bound function \(\omega^u_0\) returns real values, we can replace it by \(\lfloor \omega^u_0(G)\rfloor\), since the clique number \(\omega(G)\) is an integer. Similarly, the condition \(\omega^u(G)\leq |V|\) is assumed without loss of generality, since \(|V|\) is the trivial upper-bound value for the clique number. Therefore, if an upper-bound function \(\omega^u_0\) does not satisfy these conditions, we can replace it in constant time by
\[
    \omega^u(G) := \min\{\lfloor\omega^u_0(G)\rfloor, |V|\}.
\]
The resulting upper-bound function is never weaker than \(\omega^u_0\).

In the generalized core and truss definitions introduced later, uniqueness of the largest subgraph is not automatic for an arbitrary upper-bound function. To ensure that these definitions are well-defined, we require the upper-bound function to be monotone with respect to taking subgraphs.

\begin{definition}
    Let \(\omega^u:\mathcal{G}\rightarrow\mathbb{N}_0\) be an MCP upper-bound function. We say that \(\omega^u\) is increasing if, for all graphs \(G,G'\in\mathcal{G}\),
    \[
        G\subseteq G' \implies \omega^u(G) \leq \omega^u(G').
    \]
\end{definition}

We also use an edge-sensitivity property to distinguish upper-bound functions whose value can change under edge deletion; this will allow us to state more precise running-time bounds for the corresponding reduction algorithms.

\begin{definition}
    Let \(\omega^u\) be an MCP upper-bound function. We say that \(\omega^u\) is edge-sensitive if there exists a graph \(G=(V,E)\in\mathcal{G}\) and an edge \(\overline{uv}\in E\) such that
    \[
        \omega^u(G) \neq \omega^u(G\setminus \overline{uv}).
    \]
\end{definition}

In the following, we list the MCP upper-bound functions that will be used in this paper. They are ordered approximately by their computational cost.

\begin{enumerate}
    \item \textbf{Trivial:} The trivial upper-bound function returns \(n=|V|\), since \(\omega(G)\leq n\). If \(n=|V|\) is already stored, this value is computed in \(\mathcal{O}(1)\) time; otherwise, computing it requires \(\mathcal{O}(n)\) time.

    \item \textbf{Density:} Since every clique of size \(\omega(G)\) contains \(\binom{\omega(G)}{2}\) edges, we have \(\binom{\omega(G)}{2}\leq m\). Hence,
          \[
              \omega(G)\leq \left\lfloor \frac{1+\sqrt{1+8m}}{2} \right\rfloor.
          \]
          If \(m=|E|\) is already stored, this upper-bound value is computed in \(\mathcal{O}(1)\) time; otherwise, computing it requires \(\mathcal{O}(n+m)\) time.

    \item \textbf{Degree:} Every vertex in a maximum clique has at least \(\omega(G)-1\) neighbors. Therefore,
          \[
              \omega(G)\leq \Delta(G)+1.
          \]
          If the maximum degree \(\Delta(G)\) is not already stored, this upper-bound value can be computed in \(\mathcal{O}(n+m)\) time.

    \item \textbf{DSatur coloring:} Any proper coloring of \(G\) with \(c\) colors gives an upper-bound value \(c\), since all vertices of a clique must receive distinct colors. We compute such a coloring using the DSatur heuristic introduced in~\cite{brelaz1979new}. A straightforward implementation has time complexity \(\mathcal{O}(n^2)\). Although priority-queue implementations may be preferable on sparse graphs, we use the straightforward implementation in our experiments, since our most difficult instances are dense.

    \item \textbf{Lov\'asz theta:} The Lov\'asz theta function gives an upper-bound function \(\vartheta(G)\) for the independence number \(\alpha(G)\); see~\cite{lovasz1979shannon}. For MCP, we therefore obtain
          \(\omega(G)=\alpha(\overline{G})\leq \lfloor \vartheta(\overline{G}) \rfloor\).
          Computing this upper-bound value requires solving a semidefinite programming problem and is therefore computationally expensive. Let \(\overline{m}=|E(\overline{G})|\). Using standard primal-dual interior-point methods, an \(\varepsilon\)-accurate solution can be computed in polynomial time; for example, short-step path-following methods have complexity bounds of the order
          \(\mathcal{O}((\overline{m}^3+\overline{m}^2n^2+\overline{m}n^3)\sqrt{n}\log(1/\varepsilon))\)
          floating-point operations; see, for example,~\cite{monteiro1997primal}. Since \(\overline{m}=\mathcal{O}(n^2)\) in the worst case, this gives the worst-case bound
          \(\mathcal{O}(n^{6.5}\log(1/\varepsilon))\).

    \item \textbf{Vector coloring:} Another SDP-based upper-bound function for the MCP arises from the vector chromatic number \(\chi_v(G)\), introduced by Karger, Motwani, and Sudan~\cite{Karger1998ApproximateGraphColoring}. It satisfies
          \(
          \omega(G)\leq \chi_v(G)\leq \vartheta(\overline{G}),
          \)
          where the second inequality follows from the fact that the strict vector chromatic number coincides with the Lov\'asz theta function of the complement graph~\cite[Theorem 8.2]{Karger1998ApproximateGraphColoring}. The first inequality is immediate, since any clique of size \(k\) requires \(k\) mutually constrained vectors in any vector \(k\)-coloring of \(G\). Computing \(\chi_v(G)\) also requires solving a semidefinite programming problem. Here the linear constraints are associated with the edges of \(G\), so let \(m=|E(G)|\). Using the same class of interior-point methods as for the Lov\'asz theta function, an \(\varepsilon\)-accurate solution can be computed in
          \(\mathcal{O}((m^3+m^2n^2+mn^3)\sqrt{n}\log(1/\varepsilon))\)
          floating-point operations. Since \(m=\mathcal{O}(n^2)\) in the worst case, this again gives the worst-case bound
          \(\mathcal{O}(n^{6.5}\log(1/\varepsilon))\).

          Thus, vector coloring can give an upper-bound value for \(\omega(G)\) at least as tight as the Lov\'asz theta value. In practice, however, the relative cost depends on graph density: the Lov\'asz theta bound for MCP is computed on \(\overline{G}\), while vector coloring is computed on \(G\). Hence, Lov\'asz theta is often more feasible on dense graphs, whereas vector coloring can be preferable on sparse graphs.

\end{enumerate}

All upper-bound functions considered in this paper are edge-sensitive, with the exception of the trivial upper-bound function.

It is immediate that the trivial, density, and degree upper-bound functions are increasing. The upper-bound function \(\omega^u(G)=\lfloor \vartheta(\overline{G}) \rfloor\) is increasing with respect to taking subgraphs. Indeed, Lovász observes in~\cite[Section~11.1]{lovasz2019graphs} that if \(G'\) is an induced subgraph of \(G\), then \(\vartheta(G')\leq \vartheta(G)\), while if \(G'\) is a spanning subgraph of \(G\), then \(\vartheta(G')\geq \vartheta(G)\). Let \(H\subseteq G\) and let \(S=V(H)\). Since \(\overline{G[S]}\) is a spanning
subgraph of \(\overline{H}\), we have
\(
\vartheta(\overline{H})\leq \vartheta(\overline{G[S]}).
\)
Moreover, \(\overline{G[S]}\) is an induced subgraph of \(\overline{G}\), and hence
\(
\vartheta(\overline{G[S]})\leq \vartheta(\overline{G}).
\)
Therefore,
\[
    \omega^u(H)
    =
    \lfloor\vartheta(\overline{H})\rfloor
    \leq
    \lfloor\vartheta(\overline{G[S]})\rfloor
    \leq
    \lfloor\vartheta(\overline{G})\rfloor
    =
    \omega^u(G).
\]

The upper-bound function \(\omega^u(G)=\lfloor \chi_{v}(G) \rfloor\) is also increasing with respect
to taking subgraphs. Indeed, in the SDP formulation of vector coloring, the matrix variables are indexed by vertices, while the linear constraints are imposed only on entries corresponding to adjacent pairs of vertices. Thus, if \(H\subseteq G\), the formulation for \(H\) is obtained by keeping only the rows and columns indexed by \(V(H)\), together with the edge constraints corresponding to \(E(H)\). Equivalently, any feasible solution for \(G\) restricts to a principal submatrix feasible for \(H\); this principal submatrix remains positive semidefinite, and all edge constraints of \(H\) are inherited from \(G\). Hence \(\chi_v(H)\leq \chi_v(G)\), and therefore \(\omega^u(H)\leq \omega^u(G)\).

In contrast, the DSatur upper-bound function is not guaranteed to be increasing. Because DSatur is a heuristic, adding edges may change the coloring order and can lead to a coloring with fewer colors, and therefore to a smaller upper-bound value.

\section{The \(k\)-core and \((k,\omega^u)\)-core}
\label{sec:core}
In this section, we first recall the classical \(k\)-core reduction and its connection to the Maximum Clique Problem. We then introduce a stronger variant, called the \((k,\omega^u)\)-core, where the degree condition is replaced by a general upper-bound function for the clique number of the neighborhood.

\subsection{The \(k\)-core}

The concept of a \(k\)-core is well known and was originally introduced in~\cite{Seidman1983network}. Its connection to MCP follows from a simple necessary condition: every vertex in a clique of size \(k\) has at least \(k-1\) neighbors among the other vertices of the clique.

\begin{definition}
    Let \(G = (V,E)\) be a graph and let \(k \geq 0\) be an integer.
    A \(k\)-core of \(G\) is a largest induced subgraph \(G' = G[V']\)
    such that \(|N_{G'}(v)| \geq k\) for every vertex \(v \in V'\).
\end{definition}

Existence is immediate, since \(G\) is finite and the empty induced subgraph satisfies the condition vacuously. Uniqueness follows from the following proposition:

\begin{proposition}
    \label{prop:k-core_unique}
    A \(k\)-core of a graph is unique.
\end{proposition}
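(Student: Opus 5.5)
The plan is to show that the family of vertex sets satisfying the \(k\)-core condition is closed under union. Uniqueness of a largest such set then follows immediately. Call \(S\subseteq V\) \emph{admissible} if \(|N_{G[S]}(v)|\ge k\) for every \(v\in S\). A \(k\)-core is \(G[V']\) for an admissible \(V'\) of maximum cardinality. Since an induced subgraph is determined by its vertex set, it suffices to show that the maximum-cardinality admissible set is unique.

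The key step is the union lemma: if \(S_1\) and \(S_2\) are admissible, then \(S_1\cup S_2\) is admissible. Take \(v\in S_1\cup S_2\), say \(v\in S_1\). Because \(G[S_1]\) is an induced subgraph of \(G[S_1\cup S_2]\), we have \(N_{G[S_1]}(v)\subseteq N_{G[S_1\cup S_2]}(v)\). This gives
\[
|N_{G[S_1\cup S_2]}(v)|\ge |N_{G[S_1]}(v)|\ge k .
\]
The case \(v\in S_2\) is symmetric. The monotonicity of neighborhoods under enlarging the induced vertex set is what makes this work, and it is essentially the only ingredient.

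To conclude, suppose \(G[V_1]\) and \(G[V_2]\) are both \(k\)-cores, so \(|V_1|=|V_2|\) is the maximum size of an admissible set. By the lemma, \(V_1\cup V_2\) is admissible. Maximality forces \(|V_1\cup V_2|\le|V_1|\), and since \(V_1\subseteq V_1\cup V_2\), we get \(V_1\cup V_2=V_1\). Hence \(V_2\subseteq V_1\), and by symmetry \(V_1=V_2\), so the two \(k\)-cores coincide. Equivalently, the \(k\)-core is the induced subgraph on the union of all admissible sets. This union is finite and admissible by iterating the lemma, and it contains every admissible set.

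I do not expect a real obstacle here; the argument is elementary. The one point to state explicitly is that ``largest'' means largest by number of vertices among induced subgraphs, so comparing vertex sets suffices. The empty set is admissible, so the maximum is taken over a nonempty family.
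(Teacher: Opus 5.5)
Your proposal is correct and follows essentially the same route as the paper's proof. Both show that \(G[V_1\cup V_2]\) still has minimum degree at least \(k\) because neighborhoods only grow when the induced vertex set is enlarged, and then use maximum order to conclude \(V_1=V_2\).
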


\begin{proof}
    Let \(C_1 = G[V_1]\) and \(C_2 = G[V_2]\) be two \(k\)-cores of \(G\).
    Then \(G[V_1 \cup V_2]\) also has minimum degree at least \(k\), since each
    vertex keeps all its neighbors from either \(C_1\) or \(C_2\). Because
    \(C_1\) and \(C_2\) have maximum order among such induced subgraphs, we have
    \(|V_1 \cup V_2| = |V_1| = |V_2|\). Hence \(V_1 = V_2\), and therefore
    \(C_1 = C_2\).
\end{proof}

By Proposition~\ref{prop:k-core_unique}, we may refer to a \(k\)-core of \(G\) as the \(k\)-core of \(G\). The \(k\)-core can be computed by the standard peeling procedure shown in Algorithm~\ref{alg:k-core}, which repeatedly removes vertices whose current degree is smaller than \(k\).

\begin{algorithm}[ht]
    \caption{\(k\)-core computation}
    \label{alg:k-core}

    \DontPrintSemicolon
    \small
    \SetAlgoLined
    \LinesNumbered
    \SetAlgoNlRelativeSize{-1}
    \SetKwInOut{Input}{Input}
    \SetKwInOut{Output}{Output}
    \SetKwInOut{Global}{Global}

    \SetKwProg{Proc}{ }{}{ }

    \Input{Graph \(G=(V,E)\), integer \(k \geq 0\)}
    \Output{The \(k\)-core of \(G\)}

    \BlankLine
    \BlankLine

    \Proc{\textsc{core}\((G, k)\)}{
        \(Q_V \gets V\)\;
        \While{\(Q_V \neq \emptyset\)}{
            \(v \leftarrow \textsc{pick\_and\_remove}(Q_V)\)\;
            \If{\(|N_G(v)| < k\)}{
                \(Q_V \gets Q_V \cup N_G(v)\)\;
                \(G \gets G \setminus v\)\;
            }
        }
        \Return \(G\)\;
    }
\end{algorithm}

\begin{proposition}
    \label{prop:k-core_corr_comp}
    The function \textsc{core}\((G, k)\) computes the \(k\)-core of the graph \(G\) in time \(\mathcal{O}(n+m)\).
\end{proposition}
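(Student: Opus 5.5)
The proof has two parts, correctness and running time.

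\textbf{Correctness.} Let \(C=G[V_C]\) be the \(k\)-core, which exists and is unique by Proposition~\ref{prop:k-core_unique}. We show two facts about the output \(G_{\mathrm{out}}=G[V_{\mathrm{out}}]\).

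First, \(V_C\subseteq V_{\mathrm{out}}\), that is, no vertex of the core is ever deleted. Suppose not, and let \(v\) be the first vertex of \(V_C\) that the algorithm removes. At that moment every vertex of \(V_C\) is still present. Since deletions only remove vertices, the current graph is an induced subgraph containing \(C\), so \(|N_G(v)|\ge |N_C(v)|\ge k\). This contradicts the removal test.

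Second, \(G_{\mathrm{out}}\) has minimum degree at least \(k\). For this I use the invariant that at every point, every vertex \(w\) currently in the graph with \(|N_G(w)|<k\) belongs to \(Q_V\).
\begin{itemize}
\item Initially the invariant holds, since \(Q_V=V\).
\item The degree of a vertex \(w\) drops only when a neighbor \(v\) is deleted. At that moment \(w\in N_G(v)\) is added to \(Q_V\), so \(w\) is queued whenever its degree changes.
\item A vertex \(w\) leaves \(Q_V\) without being deleted only when the test \(|N_G(w)|<k\) fails. It then has degree at least \(k\) until one of its neighbors is removed, and that removal requeues it.
\end{itemize}
At termination \(Q_V=\emptyset\), so all remaining vertices have degree at least \(k\). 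Hence \(V_{\mathrm{out}}\) is a vertex set satisfying the core condition, so \(|V_{\mathrm{out}}|\le|V_C|\) by the maximality in the definition. Combined with \(V_C\subseteq V_{\mathrm{out}}\), this gives \(V_{\mathrm{out}}=V_C\).

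\textbf{Complexity.} I charge each operation to a vertex or an edge. A vertex enters \(Q_V\) once initially and once more for each deletion of one of its neighbors. Each edge \(\overline{uv}\) therefore causes at most one insertion, namely when the first of \(u,v\) is deleted, after which the edge is gone. So the total number of insertions, and hence of loop iterations, is at most \(n+m\).

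I store \(Q_V\) as a set with a membership flag (a stack plus a boolean array), so that re-adding an element already present costs \(\mathcal{O}(1)\) and makes no duplicate. I also maintain degrees in an array, so the test costs \(\mathcal{O}(1)\). Deleting \(v\) then takes \(\mathcal{O}(1+\deg(v))\) time: traverse its adjacency list, decrement neighbor degrees, mark \(v\) as removed, and enqueue its neighbors. Each vertex is deleted at most once, so all deletions together cost \(\mathcal{O}(n+m)\). Adding the \(\mathcal{O}(n+m)\) loop iterations gives \(\mathcal{O}(n+m)\) overall.

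The main obstacle is the data-structure detail. Physically removing \(v\) from the neighbors' adjacency lists would not be \(\mathcal{O}(\deg v)\) without extra machinery. I plan to avoid this with lazy deletion: mark \(v\) as removed and let later traversals skip marked vertices. Each adjacency entry is then scanned only when its owner is deleted, so the scan costs stay within \(\mathcal{O}(m)\). The degree counts remain exact because they are decremented explicitly at each deletion, so the test \(|N_G(v)|<k\) reads the correct current degree.
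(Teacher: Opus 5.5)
Your proof is correct and follows the paper's argument. It uses the same first-removed-core-vertex contradiction for $V_C\subseteq V_{\mathrm{out}}$, the same requeue-the-neighbors reasoning for the minimum-degree property (which you make explicit as a loop invariant), and the same charging of queue insertions, with your count of $n+m$ insertions and your explicit data structures (maintained degrees, membership flags, lazy deletion) being only harmless refinements of the paper's sketch.
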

\begin{proof}
    The algorithm removes exactly those vertices whose degree in the current graph is smaller than \(k\). Whenever a vertex is removed, only its neighbors can have their degree decreased, and therefore these vertices are inserted back into \(Q_V\). Hence, when the algorithm terminates, every remaining vertex has degree at least \(k\).

    It remains to show that no vertex belonging to a \(k\)-core is removed. Let \(L\) be the largest induced subgraph with minimum degree at least \(k\), and suppose that the algorithm removes a vertex of \(L\). Let \(w\) be the first such vertex, and let \(H\) be the current graph right before the moment when \(w\) is removed. Since no vertex of \(L\) was removed before \(w\), we have \(N_L(w) \subseteq N_H(w)\). Thus \(|N_H(w)| \geq |N_L(w)| \geq k\), which contradicts the rule by which \(w\) is removed from \(H\). Therefore all vertices of \(L\) remain in \(G'\). Since every vertex of \(G'\) has degree at least \(k\) in \(G'\), and \(L\) is a largest induced subgraph with this property, \(G'\) cannot contain any vertex outside \(L\). Hence \(G'=L\).

    For the running time, assume that the current degree of each vertex is maintained throughout the algorithm; initially, \(Q_V\) contains \(n\) vertices. Whenever a vertex \(v\) is removed, all vertices in \(N(v)\) are inserted into \(Q_V\). Thus the total number of insertions caused by removals is at most \(\sum_{v \in V} |N(v)| = 2m\). We charge each such insertion to the later iteration in which the same vertex is processed by \textsc{pick\_and\_remove}. Hence the cost of inserting a vertex into \(Q_V\) and later processing it is counted together as \(\mathcal{O}(1)\). Therefore the total running time is \(\mathcal{O}(n+m)\).

\end{proof}

The following proposition gives the reduction property that makes the \(k\)-core useful for MCP:

\begin{proposition}
    \label{prop:k-core}
    Let \(G = (V,E)\) be a graph, let \(K \subseteq V\) be a set of size \(k\geq 1\), and let \(G' = \textsc{core}(G,k-1)\). Then:
    \[K \text{ is a clique in } G \Longleftrightarrow K \text{ is a clique in } G'.\]

\end{proposition}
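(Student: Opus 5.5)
The plan is to prove the two implications separately. The reverse implication is immediate, and the forward implication reduces to showing that the vertex set of a clique survives the peeling. We rely on Proposition~\ref{prop:k-core_corr_comp}, which says \(G'=\textsc{core}(G,k-1)\) is the \((k-1)\)-core of \(G\), and on Proposition~\ref{prop:k-core_unique}, which says this core is the unique largest induced subgraph of \(G\) with minimum degree at least \(k-1\). In particular, \(G'=G[V']\) is an induced subgraph of \(G\).

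\emph{Direction \((\Leftarrow)\).} Suppose \(K\) is a clique in \(G'\). Then \(K\subseteq V'\subseteq V\). Since \(E(G')\subseteq E(G)\), every pair of distinct vertices of \(K\) that is adjacent in \(G'\) is also adjacent in \(G\). Hence \(K\) is a clique in \(G\).

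\emph{Direction \((\Rightarrow)\).} Suppose \(K\) is a clique in \(G\) with \(|K|=k\geq 1\). The key claim is that \(K\subseteq V'\). Once this holds, \(G'\) is the induced subgraph \(G[V']\), so it contains every edge of \(G\) between vertices of \(K\), and \(K\) is a clique in \(G'\).

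To prove the claim, consider the induced subgraph \(G[K]\). It is complete, so every \(v\in K\) satisfies \(|N_{G[K]}(v)|=k-1\). Now take \(L\) to be the \((k-1)\)-core \(G'=G[V']\) and form the induced subgraph \(G[V'\cup K]\). Each vertex of \(V'\) keeps its at least \(k-1\) neighbours from \(V'\), and each vertex of \(K\) keeps its \(k-1\) neighbours from \(K\). So \(G[V'\cup K]\) has minimum degree at least \(k-1\). This is the same union argument used in the proof of Proposition~\ref{prop:k-core_unique}.

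Because \(G'\) is a \emph{largest} induced subgraph with this property, we get \(|V'\cup K|\leq |V'|\), and hence \(K\subseteq V'\). Alternatively, one could argue directly on Algorithm~\ref{alg:k-core}, as in the proof of Proposition~\ref{prop:k-core_corr_comp}. Let \(w\) be the first vertex of \(K\) that the algorithm removes. At that moment \(w\) still has all \(k-1\) of its clique neighbours, so its degree is at least \(k-1\), and the removal test \(|N(w)|<k-1\) fails. This is a contradiction.

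The only mildly delicate step is the edge case \(k=1\). There the condition ``degree at least \(0\)'' holds vacuously, so \(G'=G\) and both directions are trivial; the argument above covers this case without change. Beyond that, I expect no real obstacle. The main point is to make explicit that the core is an \emph{induced} subgraph, so that adjacency between surviving vertices is preserved.
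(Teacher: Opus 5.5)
Your proof is correct, but your main route differs from the paper's. You first identify \(G'\) with the \((k-1)\)-core via Proposition~\ref{prop:k-core_corr_comp}. You then reuse the union argument from Proposition~\ref{prop:k-core_unique}: \(G[V'\cup K]\) is an induced subgraph of minimum degree at least \(k-1\), so maximality of the core forces \(K\subseteq V'\).

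The paper uses only what you list as the alternative. It takes the first vertex of \(K\) removed by Algorithm~\ref{alg:k-core}, notes that this vertex still has its \(k-1\) clique neighbours at that moment, and gets a contradiction with the removal rule. It then concludes using that \(G'\) is induced. This never needs the correctness of the algorithm or any maximality property.

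Your structural version buys a slightly stronger statement: every \(k\)-clique lies in the \((k-1)\)-core as an object, regardless of how that core is computed. The paper's version is more economical, and it is the one that transfers to later results. For the \((k,\omega^u)\)-core with a non-increasing \(\omega^u\) (Remark~\ref{rem:core-preservation-without-increasing}), the output is no longer known to be a largest subgraph with the defining property. The same holds for \textsc{truss\_core} with the order-dependent \textsc{cond\_mod}. Even your union step would then need \(\omega^u\) to be increasing to handle the vertices of \(V'\). The first-removed-element argument uses only the removal rule and the fact that \(\omega^u\) is a valid upper bound.
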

\begin{proof}
    The implication \((\Leftarrow)\) follows directly from \(G' \subseteq G\). For \((\Rightarrow)\), suppose that \(K\) is a clique in \(G\), but \(K\) is not contained in \(G'\). Let \(v \in K\) be the first vertex of \(K\) removed by the algorithm. At the moment when \(v\) is removed, all other vertices of \(K\) are still present. Hence \(v\) has at least \(k-1\) neighbors in the current graph, contradicting the rule that only vertices with fewer than \(k-1\) neighbors are removed. Therefore \(K \subseteq V(G')\), and since \(G'\) is an induced subgraph of \(G\), \(K\) is a clique in \(G'\).
\end{proof}

By computing the \((k-1)\)-core of a graph, we can reduce the graph while preserving all cliques of size \(k\). Consequently, all cliques of size at least \(k\) are also preserved.

\subsection{The \((k, \omega^u)\)-core}
We first consider an MCP-specific instance of generalized cores~\cite{Batagelj2011GeneralizedCores}, obtained by replacing the usual degree condition with an upper-bound condition on the neighborhood. We observe that the condition \(|N(v)|<k\) uses the size of the neighborhood \(N(v)\) as a trivial upper-bound value for the clique number of the induced subgraph \(G[N(v)]\). Indeed, if a clique of size \(k+1\) contains a vertex \(v\), then the remaining \(k\) vertices must form a clique inside \(G[N(v)]\).

This observation suggests a natural improvement: instead of using \(|N(v)|\), we can apply an MCP upper-bound function, as defined in Definition~\ref{def:upper_bound}, to the neighborhood subgraph \(G[N(v)]\), denoted by \(\omega^u(G[N(v)])\). In this way, we may obtain a more selective core and therefore a stronger reduction. For simplicity, when the graph \(G\) is clear from the context, we write \(\omega^u(N(v))\), instead of \(\omega^u(G[N_G(v)])\). This idea leads to the definition of the \((k,\omega^u)\)-core.

\begin{definition}
    Let \(G = (V,E)\) be a graph, let \(k \geq 0\) be an integer, and let \(\omega^u\) be an MCP upper-bound function. A \((k,\omega^u)\)-core of \(G\) is a largest induced subgraph \(G' = G[V']\) such that \(\omega^u(G'[N_{G'}(v)]) \geq k\) for every vertex \(v \in V'\).
\end{definition}

Thus, when \(\omega^u\) is increasing, the \((k,\omega^u)\)-core is an instance of the generalized-core framework~\cite{Batagelj2011GeneralizedCores}. We nevertheless state the definition and prove the needed properties in our setting, both for completeness and because similar ideas are later used to generalize the \(k\)-truss.

As before, existence is immediate, since \(G\) is finite and the empty induced subgraph satisfies the condition vacuously. If \(\omega^u\) is increasing, uniqueness follows from the following proposition:

\begin{proposition}
    \label{prop:k,omega-core_unique}
    If \(\omega^u\) is increasing, then a \((k,\omega^u)\)-core of a graph is unique.
\end{proposition}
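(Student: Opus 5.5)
We mirror the proof of Proposition~\ref{prop:k-core_unique}. Let \(C_1=G[V_1]\) and \(C_2=G[V_2]\) be two \((k,\omega^u)\)-cores of \(G\), and set \(U=V_1\cup V_2\) and \(C=G[U]\). The goal is to show that \(C\) also satisfies the defining condition, namely \(\omega^u(C[N_C(v)])\geq k\) for every \(v\in U\). Maximality of \(C_1\) and \(C_2\) then gives \(|U|\leq |V_1|=|V_2|\). Since \(V_1,V_2\subseteq U\), this forces \(V_1=U=V_2\), and hence \(C_1=C_2\).

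To verify the condition on \(C\), fix \(v\in U\). Without loss of generality \(v\in V_1\); the case \(v\in V_2\) is symmetric. We compare the two neighborhood subgraphs. Because \(C_1\) and \(C\) are both induced subgraphs of \(G\) with \(V_1\subseteq U\), we have
\[
N_{C_1}(v)=N_G(v)\cap V_1\subseteq N_G(v)\cap U=N_C(v).
\]
It follows that \(C_1[N_{C_1}(v)]=G[N_G(v)\cap V_1]\) is an induced subgraph of \(C[N_C(v)]=G[N_G(v)\cap U]\), and in particular a subgraph of it. We now apply that \(\omega^u\) is increasing:
\[
\omega^u\bigl(C[N_C(v)]\bigr)\geq \omega^u\bigl(C_1[N_{C_1}(v)]\bigr)\geq k,
\]
where the last inequality holds because \(C_1\) is a \((k,\omega^u)\)-core.

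The main point needing care is this subgraph-inclusion step. It relies on all the graphs involved being induced subgraphs of \(G\), so that the neighborhood subgraphs are nested as subgraphs in the sense required by the definition of an increasing function. This is exactly where the increasing hypothesis replaces the trivial monotonicity of \(|N(v)|\) used in the classical proof. Everything else is the same cardinality argument as in Proposition~\ref{prop:k-core_unique}.
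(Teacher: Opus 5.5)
Your proof is correct and follows essentially the same route as the paper's: you take \(C=G[V_1\cup V_2]\), use that \(\omega^u\) is increasing to show it satisfies the \((k,\omega^u)\)-core condition, and conclude \(V_1=V_2\) by maximality. Your explicit check that \(N_{C_1}(v)=N_G(v)\cap V_1\subseteq N_G(v)\cap U=N_C(v)\) makes the subgraph inclusion slightly more explicit than the paper does, but the argument is the same.
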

\begin{proof}
    Let \(C_1 = G[V_1]\) and \(C_2 = G[V_2]\) be two \((k,\omega^u)\)-cores of \(G\). Consider \(C = G[V_1 \cup V_2]\). For every \(v \in V_1\), we have \(C_1[N_{C_1}(v)] \subseteq C[N_C(v)]\), and since \(\omega^u\) is increasing, \(\omega^u(C[N_C(v)]) \geq \omega^u(C_1[N_{C_1}(v)]) \geq k\). The same argument holds for every \(v \in V_2\) as well. Hence \(C\) satisfies the \((k,\omega^u)\)-core condition. Since \(C_1\) and \(C_2\) have maximum order among induced subgraphs satisfying this condition, \(|V_1 \cup V_2| = |V_1| = |V_2|\). Therefore \(V_1 = V_2\), and so \(C_1 = C_2\).
\end{proof}

When \(\omega^u\) is increasing, Proposition~\ref{prop:k,omega-core_unique} allows us to refer to a \((k,\omega^u)\)-core of \(G\) as the \((k,\omega^u)\)-core of \(G\). The \((k,\omega^u)\)-core can be computed by the peeling procedure shown in Algorithm~\ref{alg:k,omega-core}, where the degree test \(|N_G(v)| < k\) from Algorithm~\ref{alg:k-core} is replaced by the upper-bound value test \(\omega^u(N_G(v)) < k\).

\begin{algorithm}[ht]
    \caption{\((k,\omega^u)\)-core computation}
    \label{alg:k,omega-core}

    \DontPrintSemicolon
    \small
    \SetAlgoLined
    \LinesNumbered
    \SetAlgoNlRelativeSize{-1}
    \SetKwInOut{Input}{Input}
    \SetKwInOut{Output}{Output}
    \SetKwInOut{Global}{Global}

    \SetKwProg{Proc}{ }{}{ }

    \Input{Graph \(G=(V,E)\), integer \(k \geq 0\), MCP upper-bound function \(\omega^u\)}
    \Output{\((k,\omega^u)\)-core of \(G\), if \(\omega^u\) is increasing}

    \BlankLine
    \BlankLine

    \Proc{\textsc{core}\((G, k, \omega^u)\)}{
        \(Q_V \gets V\)\;
        \While{\(Q_V \neq \emptyset\)}{
            \(v \leftarrow \textsc{pick\_and\_remove}(Q_V)\)\;
            \If{\(\omega^u(N_G(v)) < k\)}{
                \(Q_V \gets Q_V \cup N_G(v)\)\;
                \(G \gets G \setminus v\)\;
            }
        }
        \Return \(G\)\;
    }
\end{algorithm}

\begin{proposition}
    \label{prop:k,omega-core_corr_comp}
    Let \(G' = \textsc{core}(G,k,\omega^u)\). Then the following holds:
    \begin{itemize}
        \item If \(\omega^u\) is increasing, then \(G'\) is the \((k,\omega^u)\)-core of \(G\).
        \item The running time is \(\mathcal{O}(n+wm)\), where \(w\) is the amortized time for one computation of \(\omega^u\).
    \end{itemize}
\end{proposition}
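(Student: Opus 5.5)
The plan is to follow the structure of the proof of Proposition~\ref{prop:k-core_corr_comp}, replacing the degree test by the upper-bound test throughout. The proof has three parts: first that every remaining vertex satisfies the core condition, then that no vertex of the \((k,\omega^u)\)-core is ever removed, and finally the running-time count.

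\emph{Every remaining vertex satisfies the condition.} Whenever a vertex \(v\) is deleted, the only neighborhood subgraphs \(G[N_G(u)]\) that can change are those with \(u\in N_G(v)\). Every such \(u\) is reinserted into \(Q_V\). I would state this as a loop invariant: every vertex of the current graph that is not in \(Q_V\) satisfied \(\omega^u(N_G(u))\geq k\) when it was last tested, and its neighborhood subgraph has not changed since then. This needs no monotonicity of \(\omega^u\), because an unchanged neighborhood subgraph gives an unchanged value. At termination \(Q_V=\emptyset\), so the returned graph \(G'\) is an induced subgraph of the input with \(\omega^u(G'[N_{G'}(v)])\geq k\) for all \(v\in V(G')\).

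\emph{No core vertex is removed.} This is the step that uses the assumption that \(\omega^u\) is increasing. Let \(L=G[V_L]\) be the \((k,\omega^u)\)-core, which is unique by Proposition~\ref{prop:k,omega-core_unique}. Suppose some vertex of \(V_L\) is removed, and let \(w\) be the first one, with \(H\) the current graph just before its removal. Since \(H\) is induced and contains \(V_L\), we have \(L[N_L(w)]\subseteq H[N_H(w)]\). Monotonicity then gives \(\omega^u(N_H(w))\geq \omega^u(L[N_L(w)])\geq k\), which contradicts the removal test. Hence \(V_L\subseteq V(G')\). By the first part, \(G'\) is an induced subgraph satisfying the condition, so maximality of \(L\) gives \(|V(G')|\leq |V_L|\). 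Therefore \(G'=L\).

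\emph{Running time.} I would count iterations of the while loop. There are \(n\) initial entries, and the removal of \(v\) causes at most \(|N(v)|\) insertions, each vertex being removed at most once. This gives at most \(n+2m\) iterations, and each iteration performs one evaluation of \(\omega^u\). The obstacle here is to reach \(\mathcal{O}(n+wm)\) rather than \(\mathcal{O}(w(n+m))\). I would handle this by noting that an isolated vertex has an empty neighborhood, so its test is \(\omega^u(\emptyset)=0<k\) (for \(k\geq1\)) and can be decided in \(\mathcal{O}(1)\). The \(n\) initial tests can therefore be charged either to \(\mathcal{O}(1)\) each or to the at least one incident edge of the tested vertex. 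In total there are \(\mathcal{O}(m)\) costly evaluations at amortized cost \(w\), plus \(\mathcal{O}(n)\) bookkeeping. The neighbor insertions and the deletion of \(v\) cost \(\mathcal{O}(|N(v)|)\) each, summing to \(\mathcal{O}(m)\). Since \(w\) is interpreted as an amortized cost, I would state explicitly that the cost of forming \(G[N_G(v)]\) is absorbed into \(w\).
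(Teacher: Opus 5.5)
Your proposal is correct and follows essentially the same argument as the paper. The structure matches: reinsertion of neighbors guarantees that the returned graph satisfies the condition, the first removed vertex of \(L\) yields a contradiction via monotonicity, and queue insertions are charged to edges. The one minor difference is in the running time. You decide tests on isolated vertices in \(\mathcal{O}(1)\) using \(\omega^u(\emptyset)=0\) and charge the other initial tests to incident edges. The paper instead removes isolated vertices in an \(\mathcal{O}(n+m)\) preprocessing step, so that \(n=\mathcal{O}(m)\), and treats \(k=0\) as a separate trivial case; both routes give the same \(\mathcal{O}(n+wm)\) bound.
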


\begin{proof}
    The algorithm removes exactly those vertices \(v\) for which \(\omega^u(N_G(v)) < k\) in the current graph. Whenever a vertex is removed, only its neighbors can have their neighborhoods changed, and therefore these vertices are inserted back into \(Q_V\). Hence, when the algorithm terminates, every remaining vertex \(v\) satisfies \(\omega^u(N_G(v)) \geq k\).

    It remains to show that, if \(\omega^u\) is increasing, no vertex belonging to a \((k,\omega^u)\)-core is removed. Let \(L\) be the largest induced subgraph satisfying the \((k,\omega^u)\)-core condition, and suppose that the algorithm removes a vertex of \(L\). Let \(w\) be the first such vertex, and let \(H\) be the current graph right before the moment when \(w\) is removed. Since no vertex of \(L\) was removed before \(w\), we have \(L[N_L(w)] \subseteq H[N_H(w)]\). Since \(\omega^u\) is increasing, \(\omega^u(N_H(w)) \geq \omega^u(N_L(w)) \geq k\), which contradicts the rule by which \(w\) is removed from \(H\). Therefore all vertices of \(L\) remain in \(G'\). Since \(G'\) satisfies \(\omega^u(G'[N_{G'}(v)])\geq k\) for every vertex \(v\in V(G')\), and \(L\) is a largest induced subgraph with this property, \(G'\) cannot contain any vertex outside \(L\). Hence \(G'=L\).

    For the running-time analysis, the case \(k=0\) is immediate, since no vertex is removed. Assume therefore that \(k>0\). Isolated vertices can be detected and removed in \(\mathcal{O}(n+m)\) time. After this preprocessing, \(n=\mathcal{O}(m)\). Initially, \(Q_V\) contains \(n\) vertices. Whenever a vertex \(v\) is removed, all vertices in \(N_G(v)\) are inserted into \(Q_V\). Thus the total number of insertions caused by removals is at most \(\sum_{v \in V} |N_G(v)| = 2m\). We charge each such insertion to the later iteration in which the same vertex is processed by \textsc{pick\_and\_remove}. Hence the cost of inserting a vertex into \(Q_V\), later processing it, and computing \(\omega^u\) is counted together as \(\mathcal{O}(w)\). The edge-based part therefore takes \(\mathcal{O}(wm)\) time. Since \(w=\Omega(1)\), adding the preprocessing term gives the total running time \(\mathcal{O}(n+wm)\).
\end{proof}

The following proposition gives the reduction property that makes the \((k,\omega^u)\)-core useful for MCP:

\begin{proposition}
    \label{prop:k,omega-core}
    Let \(G = (V,E)\) be a graph, let \(\omega^u\) be an MCP upper-bound function, let \(K \subseteq V\) be a set of size \(k\geq 1\), and let \(G' = \textsc{core}(G,k-1,\omega^u)\). Then:

    \[
        K \text{ is a clique in } G \Longleftrightarrow K \text{ is a clique in } G'.
    \]
\end{proposition}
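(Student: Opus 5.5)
The plan mirrors the proof of Proposition~\ref{prop:k-core}, with the degree test replaced by the upper-bound test. The implication \((\Leftarrow)\) is immediate: \(G'\) is produced by deleting vertices from \(G\), so \(G'\) is an induced subgraph of \(G\), and a clique in \(G'\) is a clique in \(G\). Note that monotonicity of \(\omega^u\) is not assumed in the statement, so the argument must use only the defining property \(\omega(H)\leq\omega^u(H)\) from Definition~\ref{def:upper_bound}, applied to the current graph at the relevant moment of Algorithm~\ref{alg:k,omega-core}.

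For \((\Rightarrow)\), suppose \(K\) is a clique in \(G\) with \(|K|=k\), but some vertex of \(K\) is deleted during the run of \(\textsc{core}(G,k-1,\omega^u)\). Since the algorithm deletes vertices one at a time, there is a first vertex \(v\in K\) that is deleted. Let \(H\) be the current graph just before \(v\) is deleted. Every vertex of \(K\setminus\{v\}\) is still present in \(H\), and \(H\) is an induced subgraph of \(G\). Hence \(K\setminus\{v\}\subseteq N_H(v)\), and \(K\setminus\{v\}\) is a clique of size \(k-1\) in \(H[N_H(v)]\). Therefore
\[
\omega^u(H[N_H(v)])\;\geq\;\omega(H[N_H(v)])\;\geq\;k-1 .
\]
This contradicts the removal rule, which deletes \(v\) only when \(\omega^u(N_H(v))<k-1\). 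So no vertex of \(K\) is ever deleted, and \(K\subseteq V(G')\). Since \(G'\) is an induced subgraph of \(G\), \(K\) is a clique in \(G'\).

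The only point that needs care is that \(G'\) is defined as the output of the algorithm, not as the \((k-1,\omega^u)\)-core. This matters because uniqueness (Proposition~\ref{prop:k,omega-core_unique}) may fail for a non-increasing \(\omega^u\). The argument therefore has to reason about the actual sequence of deletions via the first deleted vertex of \(K\), rather than about a largest subgraph. I would also check the edge case \(k=1\): here the test is \(\omega^u(\cdot)<0\), which never holds, so nothing is deleted and the statement is trivial.
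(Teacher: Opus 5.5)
Your proof is correct and follows the paper's argument: take the first vertex \(v\in K\) removed, note that \(K\setminus\{v\}\) is a \((k-1)\)-clique in \(H[N_H(v)]\), so \(\omega^u(N_H(v))\geq k-1\), contradicting the removal rule. You also correctly observe, as the paper does in its following remark, that only \(\omega\leq\omega^u\) is used and no monotonicity of \(\omega^u\) is needed.
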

\begin{proof}
    The implication \((\Leftarrow)\) follows directly from \(G' \subseteq G\). For \((\Rightarrow)\), suppose that \(K\) is a clique in \(G\), but \(K\) is not contained in \(G'\). Let \(v \in K\) be the first vertex of \(K\) removed by the algorithm, and let \(H\) be the current graph immediately before \(v\) is removed. At this moment, all other vertices of \(K\) are still present. Hence \(N_H(v)\) contains a clique of size \(k-1\), and therefore \(\omega^u(N_H(v)) \geq k-1\). This contradicts the rule that only vertices with \(\omega^u(N_H(v)) < k-1\) are removed. Therefore \(K \subseteq V(G')\), and since \(G'\) is an induced subgraph of \(G\), \(K\) is a clique in \(G'\).
\end{proof}
By computing the \((k-1,\omega^u)\)-core of a graph, we can reduce the graph while preserving all cliques of size \(k\). Consequently, all cliques of size at least \(k\) are also preserved.

\begin{remark}
    \label{rem:core-preservation-without-increasing}
    Proposition~\ref{prop:k,omega-core} does not assume that \(\omega^u\) is increasing. The increasing property is needed only to prove that Algorithm~\ref{alg:k,omega-core} returns the largest induced subgraph satisfying the \((k,\omega^u)\)-core condition. When the algorithm is used only as a reduction for MCP, it is sufficient that all cliques of size \(k\) are preserved. Hence any valid MCP upper-bound function can be used for this purpose, even if it is not increasing.
\end{remark}

\section{The \((k,\omega^u)\)-truss and \((k, d,\omega^u)\)-truss}
\label{sec:truss}
In this section, we introduce the \emph{\((k,d,\omega^u)\)-truss}, a natural generalization of the \((k,\omega^u)\)-truss. This structure serves as an important building block for the algorithms presented in the rest of the paper.

We start with the concept of the \emph{\(k\)-truss}, which was first introduced in~\cite{Saito2006_kDense} and later formalized in~\cite{Cohen2008trusses}. Closely related notions also appear under the names \(k\)-community~\cite{verma2015MCPsparse} and triangle-core~\cite{Rossi2014FastTriangleCore}. It is defined in a similar way to the \(k\)-core:

\begin{definition}
    Let \(G = (V,E)\) be a graph and let \(k \geq 0\) be an integer. A \(k\)-truss of \(G\) is a largest spanning subgraph \(G' = (V,E')\), where \(E' \subseteq E\), such that \(|N_{G'}(u) \cap N_{G'}(v)| \geq k\) for every edge \(\overline{uv} \in E'\).
\end{definition}
This parameterization differs from the standard one in the truss literature, where a \(k\)-truss usually requires each edge to be contained in at least \(k-2\) triangles. We use the shifted parameterization in order to keep the notation consistent with the \(k\)-core definition, where the parameter denotes the required local support.

As with the \(k\)-core, we can use \(\omega^u\) to obtain a more general definition:

\begin{definition}
    Let \(G = (V,E)\) be a graph, let \(k \geq 0\) be an integer, and let \(\omega^u\) be an MCP upper-bound function. A \((k,\omega^u)\)-truss of \(G\) is a largest spanning subgraph \(G' = (V,E')\), where \(E' \subseteq E\), such that \(\omega^u(G'[N_{G'}(u) \cap N_{G'}(v)]) \geq k\) for every edge \(\overline{uv} \in E'\).
\end{definition}

Instead of treating this case separately, we now introduce a more general notion and prove the main results for that general setting. The above \((k,\omega^u)\)-truss will then be recovered as the special case \(d=0\).

\subsection{The \((k,d,\omega^u)\)-truss}
Our second major contribution in this paper is a generalization of this notion, which we call the \((k,d,\omega^u)\)-truss. The idea is to test whether an edge can still participate in a clique of the target size by looking for a \((d+2)\)-clique containing the edge whose common neighborhood has a sufficiently large upper-bound value.

\begin{definition}
    \label{def:C}
    Let \(G = (V,E)\) be a graph, let \(\overline{uv} \in E\), let \(k,d \in \mathbb{N}_0\) with \(0 \leq d \leq k\), and let \(\omega^u\) be an MCP upper-bound function. For a \((d+2)\)-clique \(K=\{u,v,w_1,\ldots,w_d\}\), define
    \[
        W_G(K) := \bigcap_{x\in K} N_G(x).
    \]
    We define \(C_{G,k,d,\omega^u}(u,v)\) as the following condition:
    \begin{align*}
        C_{G,k,d,\omega^u}(u,v) :=\;
         & \exists (d+2)\text{-clique } K=\{u,v,w_1,\ldots,w_d\}:\;\; \omega^u(G[W_G(K)]) \geq k-d.
    \end{align*}
    If \(C_{G,k,d,\omega^u}(u,v)\) holds, then any such clique \(K\) is called a witnessing clique for the edge \(\overline{uv}\), and \(W_G(K)\) is called the witnessing common neighborhood of \(K\).
\end{definition}

Thus, \(C_{G,k,d,\omega^u}(u,v)\) holds if the edge \(\overline{uv}\) can be extended by \(d\) vertices, and the remaining common neighborhood still has upper-bound value at least \(k-d\).

\begin{definition}
    Let \(G = (V,E)\) be a graph, let \(k,d \in \mathbb{N}_0\) with \(0 \leq d \leq k\), and let \(\omega^u\) be an MCP upper-bound function. A \((k,d,\omega^u)\)-truss of \(G\) is a largest spanning subgraph \(G' = (V,E')\), where \(E' \subseteq E\), such that \(C_{G',k,d,\omega^u}(u,v)\) holds for every edge \(\overline{uv} \in E'\).
\end{definition}
By setting \(d=0\), the definition reduces to the \((k,\omega^u)\)-truss. The parameter \(d\) gives a trade-off between running time and the strength of the reduction: larger values of \(d\) may give stronger reductions, but require more computational time.

As in the core case, existence is immediate, since \(G\) is finite and the empty spanning subgraph satisfies the condition vacuously. If \(\omega^u\) is increasing, uniqueness follows from the following proposition:

\begin{proposition}
    If \(\omega^u\) is increasing, then a \((k,d,\omega^u)\)-truss of a graph is unique.
\end{proposition}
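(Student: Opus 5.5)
We follow the same union argument as in Propositions~\ref{prop:k-core_unique} and~\ref{prop:k,omega-core_unique}, now for edge sets instead of vertex sets. Let \(T_1=(V,E_1)\) and \(T_2=(V,E_2)\) be two \((k,d,\omega^u)\)-trusses of \(G\), and consider the spanning subgraph \(T=(V,E_1\cup E_2)\). Since \(E_1\cup E_2\subseteq E\), \(T\) is a spanning subgraph of \(G\). If we show that \(T\) satisfies \(C_{T,k,d,\omega^u}(u,v)\) for every edge \(\overline{uv}\in E_1\cup E_2\), then maximality of \(T_1\) and \(T_2\) (largest here meaning maximum number of edges) forces \(|E_1\cup E_2|=|E_1|=|E_2|\). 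This gives \(E_1=E_2\), and hence \(T_1=T_2\).

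The key step is a monotonicity lemma for the condition \(C\): if \(H\subseteq H'\) are spanning subgraphs and \(C_{H,k,d,\omega^u}(u,v)\) holds for an edge \(\overline{uv}\in E(H)\), then \(C_{H',k,d,\omega^u}(u,v)\) holds as well. To prove it, take a witnessing clique \(K=\{u,v,w_1,\ldots,w_d\}\) in \(H\). Every edge of \(H\) is an edge of \(H'\), so \(K\) is still a \((d+2)\)-clique in \(H'\). For each \(x\in K\) we have \(N_H(x)\subseteq N_{H'}(x)\), so \(W_H(K)\subseteq W_{H'}(K)\). Moreover, every edge of \(H[W_H(K)]\) is an edge of \(H'\) with both endpoints in \(W_{H'}(K)\). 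Hence \(H[W_H(K)]\) is a subgraph of \(H'[W_{H'}(K)]\), in general not induced, since \(H'\) may contain extra edges. Because \(\omega^u\) is increasing with respect to arbitrary subgraphs, we obtain
\[
\omega^u(H'[W_{H'}(K)])\geq \omega^u(H[W_H(K)])\geq k-d .
\]
So \(K\) is also a witnessing clique in \(H'\).

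We apply the lemma with \(H=T_1\) or \(H=T_2\), according to which edge set contains \(\overline{uv}\), and with \(H'=T\). This shows that every edge of \(T\) satisfies the condition, which completes the argument.

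The only delicate point is the subgraph containment \(H[W_H(K)]\subseteq H'[W_{H'}(K)]\). It holds for both vertices and edges, and it is exactly where the definition of "increasing" in terms of general subgraphs, not merely induced ones, is needed. Everything else is bookkeeping. We should also note that "largest" is measured by \(|E'|\), since all candidates share the vertex set \(V\).
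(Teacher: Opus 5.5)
Your proof is correct and follows the paper's argument essentially verbatim. You take the union \((V,E_1\cup E_2)\), transfer each witnessing clique from \(T_i\) to it using \(W_{T_i}(K)\subseteq W_T(K)\) and the increasing property of \(\omega^u\), and conclude \(E_1=E_2\) by maximality; your remark that \(T_i[W_{T_i}(K)]\) is a possibly non-induced subgraph of \(T[W_T(K)]\) just makes explicit a step the paper leaves implicit.
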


\begin{proof}
    Let \(T_1 = (V,E_1)\) and \(T_2 = (V,E_2)\) be two \((k,d,\omega^u)\)-trusses of \(G\), and let \(T = (V,E_1 \cup E_2)\). We show that \(T\) also satisfies the \((k,d,\omega^u)\)-truss condition. Let \(\overline{uv} \in E_1 \cup E_2\). If \(\overline{uv} \in E_1\), then \(C_{T_1,k,d,\omega^u}(u,v)\) holds. Hence there exists a witnessing clique \(K\) for \(\overline{uv}\) in \(T_1\), so \(\omega^u(T_1[W_{T_1}(K)]) \geq k-d\). Since \(T_1 \subseteq T\), the same clique \(K\) is present in \(T\), and \(W_{T_1}(K) \subseteq W_T(K)\). Because \(\omega^u\) is increasing, \(C_{T,k,d,\omega^u}(u,v)\) also holds. The case \(\overline{uv} \in E_2\) is analogous. Therefore \(T\) satisfies the \((k,d,\omega^u)\)-truss condition. Since \(T_1\) and \(T_2\) have maximum size among spanning subgraphs satisfying this condition, \(|E_1 \cup E_2| = |E_1| = |E_2|\). Hence \(E_1 = E_2\), and therefore \(T_1 = T_2\).
\end{proof}

Thus, when \(\omega^u\) is increasing, we may refer to the unique \((k,d,\omega^u)\)-truss of \(G\) as the \((k,d,\omega^u)\)-truss of \(G\).

The computation follows the same peeling principle as for the \(k\)-core: edges that do not satisfy the condition are removed, and only edges whose condition may have changed are inserted back into the queue. Algorithm~\ref{alg:k,d,omega-truss} computes the \((k,d,\omega^u)\)-truss. The correctness and running-time analysis of the algorithm are given in the following proposition:
\begin{proposition}
    \label{prop:k-d-omega-truss-comp}
    Let \(G' = \textsc{truss}(G,k,d,\omega^u)\), let \(n=|V(G)|\), let \(m=|E(G)|\), let \(w\) be the amortized time for one computation of \(\omega^u\), and let \(w_s\) be the space used for one computation of \(\omega^u\). Then the following holds:
    \begin{itemize}
        \item If \(\omega^u\) is increasing, then \(G'\) is the \((k,d,\omega^u)\)-truss of \(G\).
        \item Assuming space complexity \(\mathcal{O}(n+m+w_s)\)
              \begin{itemize}
                  \item If \(d>0\) or \(\omega^u\) is edge-sensitive, then the running time is
                        \(\mathcal{O}\!\left(n+\left(\sqrt m+w\right)m^{\frac{d+4}{2}}\right)\).
                  \item If \(d=0\) and \(\omega^u\) is not edge-sensitive, then the running time is
                        \(\mathcal{O}\!\left(n+\left(\sqrt m+w\right)m^{\frac{3}{2}}\right)\).
              \end{itemize}
        \item Assuming space complexity \(\mathcal{O}(n+m^\frac{d+3}{2}+w_s)\)
              \begin{itemize}
                  \item If \(d>0\) or \(\omega^u\) is edge-sensitive, then the running time is
                        \(\mathcal{O}\!\left(n+wm^{\frac{d+4}{2}}\right)\).
                  \item If \(d=0\) and \(\omega^u\) is not edge-sensitive, then the running time is
                        \(\mathcal{O}\!\left(n+wm^{\frac{3}{2}}\right)\).
              \end{itemize}
    \end{itemize}
\end{proposition}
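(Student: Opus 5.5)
We have not seen Algorithm~\ref{alg:k,d,omega-truss}, so we take it to be the edge analogue of Algorithm~\ref{alg:k,omega-core}. A queue \(Q_E\) is initialized with \(E\). We repeatedly pick an edge \(\overline{uv}\) and test \(C_{G,k,d,\omega^u}(u,v)\) by enumerating the \((d+2)\)-cliques containing \(\overline{uv}\) and evaluating \(\omega^u\) on each witnessing common neighborhood. If the test fails, we delete \(\overline{uv}\) and reinsert every edge whose condition may have changed. The proof splits into two parts, correctness and running time, and we would handle them in that order.

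\textbf{Correctness.} We would mirror the proof of Proposition~\ref{prop:k,omega-core_corr_comp}.
\begin{enumerate}
\item We identify the set of edges whose condition can change when \(\overline{uv}\) is deleted.
\begin{itemize}
\item The condition for \(\overline{xy}\) depends on the \((d+2)\)-cliques through \(\overline{xy}\) and on the subgraphs \(G[W_G(K)]\).
\item Deleting \(\overline{uv}\) can only affect \(\overline{xy}\) if \(u,v\in K\cup W_G(K)\) for some such \(K\).
\item Hence \(\{x,y,u,v\}\) lies in a common clique. This is either a clique \(K\) through \(\overline{uv}\), or a clique of size up to \(d+4\) together with edges inside the witnessing neighborhood.
\end{itemize}
\item With this set identified, termination gives that every surviving edge satisfies the condition, as long as the algorithm reinserts at least all of these edges.
\item For maximality, let \(L\) be the \((k,d,\omega^u)\)-truss (unique by the preceding proposition) and let \(\overline{xy}\in E(L)\) be the first of its edges removed. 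Let \(H\) be the current graph at that moment.
\begin{itemize}
\item Since \(L\subseteq H\), the witnessing clique \(K\) of \(\overline{xy}\) in \(L\) survives in \(H\).
\item We have \(W_L(K)\subseteq W_H(K)\), and \(\omega^u\) is increasing.
\item Therefore \(C_{H,k,d,\omega^u}(x,y)\) holds, which contradicts the removal of \(\overline{xy}\).
\end{itemize}
\item Hence \(L\subseteq G'\), and maximality of \(L\) gives \(G'=L\).
\end{enumerate}

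\textbf{Running time.} The main tool is the standard bound that a graph with \(m\) edges has \(\mathcal{O}(m^{t/2})\) cliques of size \(t\) (Chiba--Nishizeki style), together with \(\mathcal{O}(\sqrt m)\)-time triangle and clique listing per edge. After removing isolated vertices in \(\mathcal{O}(n+m)\) time we may assume \(n=\mathcal{O}(m)\).
\begin{itemize}
\item \textbf{Number of reinsertions.} Each removal of \(\overline{uv}\) reinserts edges \(\overline{xy}\) lying in a common clique of size at most \(d+4\) with \(\overline{uv}\). Summing over all removed edges, the total is bounded by the number of \((d+4)\)-cliques times a constant, which is \(\mathcal{O}(m^{(d+4)/2})\). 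Cliques of size \(d+3\) and \(d+2\) contribute lower-order terms.
\item \textbf{Cost per test.} Each reinserted edge is tested at cost \(\mathcal{O}(\sqrt m + w)\) in amortized form. The \(\sqrt m\) term covers recomputing the common neighborhood or clique extension by intersecting adjacency lists of low-degree endpoints. The \(w\) term covers evaluating \(\omega^u\). This gives \(\mathcal{O}(n+(\sqrt m+w)m^{(d+4)/2})\).
\item \textbf{Case \(d=0\) with \(\omega^u\) not edge-sensitive.} Here \(\omega^u\) depends only on the vertex set. Deleting \(\overline{uv}\) then changes \(W(\overline{xy})\) only when \(\overline{xy}\) lies in a triangle with \(\overline{uv}\). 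Edges in \(4\)-cliques need no reinsertion, so the reinsertion count drops to the triangle count \(\mathcal{O}(m^{3/2})\), giving \(\mathcal{O}(n+(\sqrt m+w)m^{3/2})\).
\item \textbf{Larger-space variant.} We would precompute and store, for every edge, its list of \((d+2)\)-cliques and the associated witnessing neighborhoods. This takes \(\mathcal{O}(m^{(d+3)/2})\) space, after bounding the total storage by \((d+3)\)-cliques. These lists are updated incrementally upon deletions, so each test costs \(\mathcal{O}(w)\) amortized and the \(\sqrt m\) factor disappears.
\end{itemize}

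The main obstacle is the amortized accounting in the second step. We must show that the total work of enumerating witnessing cliques and updating neighborhoods across all repeated tests is charged to distinct \((d+4)\)-cliques, or to \((d+3)\)-cliques in the stored-structure variant, rather than multiplying the per-edge clique count by the number of reinsertions. We would try to charge each test of \(\overline{xy}\) triggered by deleting \(\overline{uv}\) to the clique witnessing their co-occurrence. Each clique is destroyed once its edge \(\overline{uv}\) is deleted, so each clique is charged only \(\mathcal{O}(1)\) times.
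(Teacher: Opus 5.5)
Your correctness argument matches the paper's. The observation that $\{x,y,u,v\}$ must be a clique is exactly what the paper's three-case analysis establishes: the reinserted edges are those from $v_1,v_2$ to $S=N_G(v_1)\cap N_G(v_2)$ and those inside $G[S]$. The maximality argument via the first removed edge of $L$ is identical.

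The genuine gap is in the running-time analysis for $d\ge 1$. Your decomposition is $\mathcal{O}(m^{(d+4)/2})$ reinsertions, each tested at amortized cost $\mathcal{O}(\sqrt m+w)$. It reaches the right exponent only because two errors cancel.
\begin{itemize}
\item The reinsertion count is inflated. Since $\{x,y,u,v\}$ is itself a clique of size at most $4$, each reinsertion can be charged to a triangle or $4$-clique destroyed by the deletion. So there are only $\mathcal{O}(m^{3/2}+m^2)=\mathcal{O}(m^2)$ processed edge occurrences, independently of $d$.
\item The per-test bound is unsupported. One test of $\overline{xy}$ may have to run through all $(d+2)$-cliques containing $\overline{xy}$, up to $\Theta(m^{d/2})$ of them, each with its own $\omega^u$-evaluation and intersections. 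The vertices of such a clique other than $x,y$ need not be adjacent to $u$ or $v$. So charging the test to the clique witnessing the co-occurrence of $\overline{xy}$ and $\overline{uv}$ pays for the test being triggered, not for the work done inside it.
\end{itemize}
The paper does exactly the multiplication you try to avoid: $\mathcal{O}(m^2)$ occurrences times $\mathcal{O}((w+\sqrt m)m^{d/2})$ per call.

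That per-call bound needs three ingredients your plan does not supply.
\begin{itemize}
\item The number of leaves of the recursion is at most the number of $d$-cliques, which is $\mathcal{O}(m^{d/2})$ by Lemma~\ref{lem:number_of_k_cliques}.
\item The recursive intersections $W\cap N_G(v)$ cost $\mathcal{O}(\min_{y\in R}\deg_G(y))$ at the node of clique $R$. By Lemma~\ref{lem:weighted-clique-degree-sum} they sum to $\mathcal{O}(m^{(d+1)/2})$ per call.
\item The initial intersection costs $a_{\overline{xy}}=\min\{|N_G(x)|,|N_G(y)|\}$. This is not $\mathcal{O}(\sqrt m)$ per edge: two adjacent vertices of degree $\Theta(m)$ already break it. It holds only on average over distinct edges. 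So you must use that each edge is processed $\mathcal{O}(m)$ times, which gives $\mathcal{O}(m\sum_e a_e)=\mathcal{O}(m^{5/2})$ via Lemma~\ref{lem:common_neighborhood_intersections}.
\end{itemize}

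The same issue affects your larger-space variant. A test still scans every stored $(d+2)$-clique through the edge, so it costs $\mathcal{O}(wm^{d/2})$, not $\mathcal{O}(w)$.

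For $d=0$ with $\omega^u$ not edge-sensitive, your conclusion is right, but the $\sqrt m$ term again needs an argument. Edge $e$ is processed $\mathcal{O}(1+t_e)$ times with $t_e\le a_e$. Then $\sum_e(1+t_e)a_e\le\sum_e a_e+\sum_e a_e^2=\mathcal{O}(m^2)$ by the second bound of Lemma~\ref{lem:common_neighborhood_intersections}.

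Your charging idea would suit a different algorithm. That algorithm would store a witness flag for every $(d+2)$-clique and, on deleting $\overline{uv}$, re-evaluate $\omega^u$ only for cliques $K$ with $u,v\in K\cup W_G(K)$; each such event destroys the clique $K\cup\{u,v\}$. But \textsc{truss} recomputes the condition from scratch, and such flags do not fit in $\mathcal{O}(n+m+w_s)$ space.
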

\begin{proof}
    The proof is given in Section~\ref{sec:truss-correctness-complexity}.
\end{proof}

\begin{algorithm}[ht]
    \caption{\((k,d,\omega^u)\)-truss computation}
    \label{alg:k,d,omega-truss}

    \DontPrintSemicolon
    \small
    \SetAlgoLined
    \LinesNumbered
    \SetAlgoNlRelativeSize{-1}
    \SetKwInOut{Input}{Input}
    \SetKwInOut{Output}{Output}
    \SetKw{KwConst}{const}
    \SetKw{KwGlobal}{global}

    \SetKwProg{Proc}{ }{}{ }

    \Input{Graph \(G=(V,E)\), integers \(k,d\) with \(0 \leq d \leq k\), MCP upper-bound function \(\omega^u\)}
    \Output{\((k,d,\omega^u)\)-truss of \(G\), if \(\omega^u\) is increasing}

    \BlankLine
    \KwConst \KwGlobal \(k,d,\omega^u\)\;

    \BlankLine
    \BlankLine

    \Proc{\(\textsc{cond}(v_1,v_2,G)\)}{
    \Proc{\textsc{c-rec\((R,W,B)\)}}{
        \lIf{\(|R| = d\)}{\Return \(\omega^u(G[W]) \geq k-d\)}
        \Return \(\bigvee_{v \in B} \textsc{c-rec}(R \cup \{v\},\; W \cap N_G(v), \;B \cap N_G^+(v))\)\;
        }
        \BlankLine

        \(S \gets N_G(v_1) \cap N_G(v_2)\)\;

        \Return \textsc{c-rec\((\emptyset,\; S, S)\)}\;
    }

    \BlankLine

    \Proc{\textsc{truss}\((G, k, d, \omega^u)\)}{
        Set \KwConst \KwGlobal variables \(k,d,\omega^u\)\;
        \(Q_E \gets E\)\;
        \While{\(Q_E \neq \emptyset\)}{
            \(\overline{v_1v_2} \leftarrow \textsc{pick\_and\_remove}(Q_E)\)\;
            \If{\(\neg \textsc{cond}(v_1,v_2,G)\)}{
                \(S \gets N_G(v_1)\cap N_G(v_2)\)\;
                \(Q_E \gets Q_E \cup \{\overline{wv_1},\overline{wv_2} \;|\; w \in S\}\)\;
                \If{\(d > 0\) or \(\omega^u\) is edge-sensitive}{
                    \(Q_E \gets Q_E \cup \{\overline{w_1w_2} \;|\; w_1 \in S,\; w_2 \in S \cap N_G(w_1)\}\)\;
                }
                \(G \gets G \setminus \overline{v_1v_2}\)\;
            }
        }
        \Return \(G\)\;
    }
\end{algorithm}

Let us compare our \((k,0,\omega^u)\)-truss algorithm with the specialized algorithm for computing the \(k\)-truss from~\cite{Wang2012TrussDecomposition}, in the case where \(\omega^u(G):=|V(G)|\). Recall that this trivial upper-bound function is not edge-sensitive, since deleting an edge does not change the number of vertices. For this comparison, we use the higher-space version of our algorithm. In this setting, \(\omega^u\) can be evaluated in amortized \(\mathcal{O}(1)\) time, since the size of the set \(W\) can be maintained throughout the algorithm. Therefore, for \(d=0\), our algorithm runs in time \(\mathcal{O}(n+m^{3/2})\), matching the asymptotic running time of the specialized \(k\)-truss algorithm from~\cite{Wang2012TrussDecomposition}.

The specialized algorithm has space complexity \(\mathcal{O}(m+n)\). In general, our higher-space version uses \(\mathcal{O}(n+m^{3/2}+w_s)\) space, because it precomputes and maintains the common neighborhood \(N(u)\cap N(v)\) for every edge \(\overline{uv}\). However, in the special case \(\omega^u(G):=|V(G)|\), it is enough to maintain only the value \(|N(u)\cap N(v)|\) for every edge \(\overline{uv}\). Since \(w_s = \mathcal{O}(1)\), in this special case, the space complexity can also be reduced to \(\mathcal{O}(n+m)\).

The following proposition gives the reduction property that makes the \((k,d,\omega^u)\)-truss useful for MCP:

\begin{proposition}
    \label{prop:(k,d,omega)_truss}
    Let \(G=(V,E)\) be a graph, let \(\omega^u\) be an MCP upper-bound function, let \(K\subseteq V\) be a set of size \(k\), let \(0\leq d\leq k-2\), and let \(G'=\textsc{truss}(G,k-2,d,\omega^u)\). Then:
    \[
        K \text{ is a clique in } G \Longleftrightarrow K \text{ is a clique in } G'.
    \]
\end{proposition}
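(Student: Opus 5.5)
The plan mirrors the proof of Proposition~\ref{prop:k,omega-core}: a first-removal argument applied to edges instead of vertices. The direction \((\Leftarrow)\) is immediate, since \(G'\) is a spanning subgraph of \(G\) with \(E(G')\subseteq E(G)\). For \((\Rightarrow)\), assume \(K\) is a clique in \(G\) of size \(k\). Since \(G'\) is spanning, it suffices to show that every edge with both endpoints in \(K\) survives the peeling in \textsc{truss}\((G,k-2,d,\omega^u)\). If \(k\le 1\) there are no such edges, so we may assume \(k\ge 2\), which guarantees that \(0\le d\le k-2\) is a valid parameter choice in Definition~\ref{def:C}.

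Suppose some edge of \(K\) is removed, and let \(\overline{uv}\) be the first such edge. Let \(H\) be the current graph immediately before its removal. All edges inside \(K\) are still present in \(H\), so \(K\) is a clique in \(H\). The edge is removed only when \(\textsc{cond}(u,v,H)\) returns false, so I will derive a contradiction by showing that \(C_{H,k-2,d,\omega^u}(u,v)\) holds and that \textsc{cond} detects it.

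For the witness, take any \(d\) vertices \(w_1<\dots<w_d\) of \(K\setminus\{u,v\}\); this is possible because \(|K\setminus\{u,v\}|=k-2\ge d\). Set \(K_0=\{u,v,w_1,\dots,w_d\}\), which is a \((d+2)\)-clique in \(H\). The remaining \(k-2-d\) vertices of \(K\) are adjacent in \(H\) to every vertex of \(K_0\), so they lie in \(W_H(K_0)\) and form a clique there. Hence \(\omega^u(H[W_H(K_0)])\ge\omega(H[W_H(K_0)])\ge k-2-d\), which is exactly the required threshold \((k-2)-d\). Note that only the defining property \(\omega\le\omega^u\) of an MCP upper-bound function is used, not monotonicity, consistent with Remark~\ref{rem:core-preservation-without-increasing}.

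The step I expect to need the most care is confirming that the recursive procedure \textsc{c-rec} actually finds this witness, given that it restricts the candidate set \(B\) via forward neighborhoods \(N_H^+\). I would argue by induction along the chain of calls that add \(w_1,w_2,\dots,w_d\) in increasing label order. At the call with \(R=\{w_1,\dots,w_i\}\), the invariants are \(W=N_H(u)\cap N_H(v)\cap\bigcap_{j\le i}N_H(w_j)\) and \(B=N_H(u)\cap N_H(v)\cap\bigcap_{j\le i}N_H^+(w_j)\). Since \(w_{i+1}\) is adjacent to \(u\), to \(v\) and to every \(w_j\), and \(w_{i+1}>w_j\) for all \(j\le i\), we get \(w_{i+1}\in B\). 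The disjunction therefore explores it, and at depth \(d\) the procedure evaluates \(\omega^u(H[W_H(K_0)])\ge k-2-d\), which is true. Hence \textsc{cond} returns true, contradicting the removal of \(\overline{uv}\). It follows that no edge of \(K\) is ever removed, so \(K\) is a clique in \(G'\).
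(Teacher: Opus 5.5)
Your proof is correct and follows the paper's argument. The paper also uses a first-removed-edge contradiction: $d$ vertices of $K\setminus\{u,v\}$ extend $\overline{uv}$ to a witnessing $(d+2)$-clique whose common neighborhood still contains the remaining $k-2-d$ vertices of $K$, using only $\omega\le\omega^u$. The one difference is that you verify directly, via the recursion invariants, that \textsc{c-rec} finds this witness, whereas the paper relies on the fact, proved separately in Section~\ref{sec:truss-correctness-complexity}, that \textsc{cond} decides $C_{G,k,d,\omega^u}$ exactly.
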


\begin{proof}
    The implication \((\Leftarrow)\) follows directly from \(G' \subseteq G\). For \((\Rightarrow)\), suppose that \(K\) is a clique in \(G\), but \(K\) is not a clique in \(G'\). Since \(G'\) is a spanning subgraph of \(G\), some edge of \(K\) was removed by the algorithm. Let \(\overline{uv}\) be the first edge of \(K\) removed, and let \(H\) be the current graph right before \(\overline{uv}\) is removed. At this moment, all other edges of \(K\) are still present. Choose any \(d\) vertices \(w_1,\ldots,w_d\) from \(K\setminus\{u,v\}\), and let \(K'=\{u,v,w_1,\ldots,w_d\}\). Then \(K'\) is a \((d+2)\)-clique in \(H\). The remaining \(k-d-2\) vertices of \(K\) form a clique in \(H[W_H(K')]\), and therefore \(\omega^u(H[W_H(K')]) \geq k-d-2\). Hence \(C_{H,k-2,d,\omega^u}(u,v)\) holds, contradicting the rule by which \(\overline{uv}\) is removed. Therefore all edges of \(K\) remain in \(G'\), and so \(K\) is a clique in \(G'\).
\end{proof}

By applying \textsc{truss}\((G,k-2,d,\omega^u)\), we can reduce the graph while preserving all cliques of size \(k\). Consequently, all cliques of size at least \(k\) are also preserved.

For the same reason as in Remark~\ref{rem:core-preservation-without-increasing}, Proposition~\ref{prop:(k,d,omega)_truss} does not require the assumption that \(\omega^u\) is increasing.

\section{Correctness and complexity of the \((k,d,\omega^u)\)-truss}
\label{sec:truss-correctness-complexity}
In this section, we prove the correctness and time-complexity bounds for Algorithm~\ref{alg:k,d,omega-truss}. We give these proofs in a separate section because they are long and technical. Readers who are not interested in these details may skip this section, since it does not contain material that is crucial for understanding the later parts of the paper.

Before proving correctness and the time-complexity bounds, we need the following lemmas.
\begin{lemma}
    \label{lem:number_of_k_cliques}
    Let \(C_k\) denote the number of \(k\)-cliques in a graph \(G\) with \(m\) edges. For every integer \(k \geq 2\), we have \(C_k = \mathcal{O}(m^{k/2})\).
\end{lemma}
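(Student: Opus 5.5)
I will prove the bound $C_k=\mathcal{O}(m^{k/2})$ by induction on $k$, using a high/low-degree split of the vertices at threshold $\sqrt m$. The constant hidden in $\mathcal{O}$ is allowed to depend on $k$, which is the regime needed later, where $k=d+2$ or $d+3$ is fixed. Isolated vertices contain no cliques of size at least $2$, so I discard them and assume every vertex has positive degree.

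For the base case $k=2$ we have $C_2=m=m^{2/2}$. For $k=3$ I use the standard triangle-counting argument; it is also the template for the inductive step. Call a vertex \emph{heavy} if its degree exceeds $\sqrt m$ and \emph{light} otherwise. Since $\sum_v |N(v)|=2m$, there are at most $2\sqrt m$ heavy vertices.

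The key step is to assign each $k$-clique ($k\ge 3$) a canonical vertex, namely its vertex of minimum degree, ties broken by label. I then split into two cases.

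\emph{Case 1: the canonical vertex $v$ is light.} The clique is $\{v\}\cup K'$, where $K'$ is a $(k-1)$-clique inside $N(v)$. Since $|N(v)|\le\sqrt m$, the number of $(k-1)$-subsets of $N(v)$ that form cliques is at most $\binom{|N(v)|}{k-1}\le |N(v)|\cdot(\sqrt m)^{k-2}$. A sharper approach is to fix an edge $\overline{vu}$ with $u\in N(v)$; the remaining $k-2$ vertices then lie in $N(v)$, which gives at most $(\sqrt m)^{k-2}$ choices. Summing over the at most $2m$ ordered pairs $(v,u)$ with $v$ light yields $\mathcal{O}(m\cdot m^{(k-2)/2})=\mathcal{O}(m^{k/2})$.

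\emph{Case 2: the canonical vertex is heavy.} By the minimum-degree choice, every vertex of the clique is then heavy. The clique is therefore a $k$-subset of a set of at most $2\sqrt m$ vertices, which gives at most $\binom{2\sqrt m}{k}=\mathcal{O}(m^{k/2})$ cliques.

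Adding the two cases gives $C_k=\mathcal{O}(m^{k/2})$. This argument is in fact direct, so the induction is not really needed.

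I expect Case 1 to be the main obstacle. The counting has to charge to edges rather than to vertices: charging per vertex would give $n\cdot m^{(k-1)/2}$, which is too weak when $n$ is large. Charging to edges incident to the light vertex avoids this, and the bound $\sum_v |N(v)|=2m$ keeps the total within $m^{k/2}$. I will also note briefly that the estimate $\binom{|N(v)|-1}{k-2}\le(\sqrt m)^{k-2}$ holds trivially even when $|N(v)|<k-1$, in which case the count is zero.
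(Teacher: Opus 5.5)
Your proof is correct, but it takes a different route from the paper. The paper does not argue from scratch. It quotes the bound $C_k\le m\binom{\sqrt m}{k-2}$ from Eden et al.\ and applies $\binom{x}{r}\le x^r$ to get $C_k\le m^{k/2}$ with constant $1$. It later reuses that explicit form in the space bound for the higher-space truss implementation.

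You instead give a self-contained heavy/light split at threshold $\sqrt m$, the same degree-splitting idea the paper uses for Lemma~\ref{lem:common_neighborhood_intersections}. Your two cases give at most $2m^{k/2}$ and $\binom{2\sqrt m}{k}\le \frac{2^k}{k!}m^{k/2}$, so $C_k\le 4m^{k/2}$. That constant is uniform in $k$, so your caveat about $k$-dependent constants is unnecessary. Three parts of the write-up can also be dropped:
\begin{enumerate}
\item the induction framing;
\item the discarding of isolated vertices;
\item the claim that charging to edges is ``sharper''. Summing $\binom{|N(v)|}{k-1}\le |N(v)|(\sqrt m)^{k-2}$ over light $v$ already gives $2m^{k/2}$.
\end{enumerate}
Your bound is enough for every later use in the paper, since they all need only the $\mathcal{O}$ form.

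What your route buys is that every constant is verified on the page. The inequality as quoted in the paper is in fact slightly too strong for large $k$. For $K_n$ with $n\to\infty$, $C_k/\bigl(m\binom{\sqrt m}{k-2}\bigr)\to 2^{k/2}/(k(k-1))$, which exceeds $1$ once $k\ge 16$. The standard degree-ordering argument yields $\sqrt{2m}$ in place of $\sqrt m$. This affects only constants, not the lemma itself.
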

\begin{proof}
    In~\cite[Claim~3]{eden2018approximating} it is shown that
    \[
        C_k \leq m\binom{\sqrt{m}}{k-2}.
    \]
    Since \(\binom{x}{r} \leq x^r\) for \(x\geq 0\) and \(r\geq 0\), we get
    \[
        C_k \leq m\binom{\sqrt{m}}{k-2}
        \leq m(\sqrt{m})^{k-2}
        =
        m^{k/2}.
    \]
    Therefore \(C_k=\mathcal{O}(m^{k/2})\).
\end{proof}

\begin{lemma}
    \label{lem:common_neighborhood_intersections}
    Let \(G=(V,E)\) be a graph with \(m\) edges. For every edge \(\overline{uv}\in E\), define
    \[
        a_{\overline{uv}}:=\min\{|N_G(u)|,|N_G(v)|\}.
    \]
    Then
    \[
        \sum_{\overline{uv}\in E} a_{\overline{uv}}=\mathcal{O}(m^{3/2})
        \qquad\text{and}\qquad
        \sum_{\overline{uv}\in E} a_{\overline{uv}}^2=\mathcal{O}(m^2).
    \]
\end{lemma}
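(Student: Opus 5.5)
The plan is the classical degree-splitting argument used for triangle counting. Fix the threshold \(\sqrt{m}\). For each edge \(\overline{uv}\), assume without loss of generality that \(|N_G(u)|\leq|N_G(v)|\), so that \(a_{\overline{uv}}=|N_G(u)|\). Split the edge set into two classes:
\[
E_{\mathrm{low}}:=\{\overline{uv}\in E \mid a_{\overline{uv}}\leq \sqrt{m}\},\qquad E_{\mathrm{high}}:=E\setminus E_{\mathrm{low}}.
\]
The whole proof rests on the handshake identity \(\sum_{x\in V}|N_G(x)|=2m\). It implies that the number of vertices of degree greater than \(\sqrt m\) is less than \(2m/\sqrt m=2\sqrt m\).

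\textbf{First sum.} For edges in \(E_{\mathrm{low}}\), each term is at most \(\sqrt m\). There are at most \(m\) such edges, so their contribution is at most \(m\sqrt m\).

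For edges in \(E_{\mathrm{high}}\), both endpoints have degree greater than \(\sqrt m\). I would charge the term \(a_{\overline{uv}}\) to the endpoint \(u\) realizing the minimum. The edges charged to a fixed vertex \(u\) all go to other high-degree vertices, so there are fewer than \(2\sqrt m\) of them. Each such edge contributes at most \(|N_G(u)|\). Summing over \(u\) gives
\[
\sum_{\overline{uv}\in E_{\mathrm{high}}} a_{\overline{uv}} \leq \sum_{u} 2\sqrt m\,|N_G(u)| = 4m\sqrt m .
\]
Together these give \(\mathcal{O}(m^{3/2})\).

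Alternatively, both classes can be handled at once by charging every edge to its lower-degree endpoint \(u\). For that fixed \(u\), the charged edges \(\overline{uv}\) have \(|N_G(v)|\geq |N_G(u)|\), and there are at most \(\min\{|N_G(u)|,\,2m/|N_G(u)|\}\) such \(v\). Hence the contribution of \(u\) is at most
\[
|N_G(u)|\cdot\min\{|N_G(u)|,\,2m/|N_G(u)|\}\leq \sqrt{2m}\,|N_G(u)|.
\]
Summing over \(u\) gives \(2\sqrt{2}\,m^{3/2}\). I would present this single-charging version, since it avoids the case split.

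\textbf{Second sum.} The same charging gives that the contribution of \(u\) is at most
\[
|N_G(u)|^2\cdot \min\{|N_G(u)|,\,2m/|N_G(u)|\}\leq 2m\,|N_G(u)|.
\]
Summing over \(u\) gives \(4m^2\), so the second sum is \(\mathcal{O}(m^2)\).

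A cruder route also works: since \(a_{\overline{uv}}\leq \sqrt{a_{\overline{uv}}}\,\sqrt{2m}\), one could bound the second sum by the first times a factor of order \(\sqrt m\). Either way the second bound follows from the first.

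\textbf{Main subtlety.} The delicate step is the counting bound "at most \(2m/|N_G(u)|\) neighbors \(v\) with \(|N_G(v)|\geq |N_G(u)|\)". It holds because all these neighbors have degree at least \(|N_G(u)|\), and their degrees sum to at most \(2m\). I also need to check that each edge is charged exactly once; ties in degree can be broken by vertex label, consistent with the paper's fixed ordering.
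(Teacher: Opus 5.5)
Your main argument is correct and is essentially the paper's proof. Charging each edge to its lower-degree endpoint is the paper's orientation, giving $\sum_{\overline{uv}\in E} a_{\overline{uv}}=\sum_{u}|N_G(u)|\,d^+(u)$. The bound $d^+(u)\le\min\{|N_G(u)|,\,2m/|N_G(u)|\}$ is exactly what the paper's split at $t=\sqrt{2m}$ exploits. For the second sum, applying $d^+(u)\le 2m/|N_G(u)|$ uniformly to every vertex is a small tidying of the paper's argument, which splits into low- and high-degree vertices there as well.

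The aside about a ``cruder route'' is wrong, and you should drop it together with the sentence ``Either way the second bound follows from the first.''

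\begin{itemize}
\item The inequality $a_{\overline{uv}}\le\sqrt{a_{\overline{uv}}}\sqrt{2m}$ only yields $\sum a^2\le\sqrt{2m}\sum a^{3/2}$.
\item Bounding $\sum a^2$ by an $\mathcal{O}(\sqrt m)$ factor times $\sum a$ would need $a_{\overline{uv}}=\mathcal{O}(\sqrt m)$ for every edge, and this fails. Take two adjacent vertices $u,v$, each with $k-1$ further pendant neighbours. Then $m=2k-1$, $\sum a=3k-2$ and $\sum a^2=k^2+2k-2$, so $\sum a^2/(\sqrt m\sum a)\to\infty$. No inequality of that form holds.
\item Going through $\sum a^{3/2}$ does not rescue it. In $K_{s,s}$ one has $\sum a^{3/2}=s^{7/2}=m^{7/4}$, so that route gives only $\mathcal{O}(m^{9/4})$.
\end{itemize}

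The second bound genuinely needs its own charging argument, which you already supply correctly.
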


\begin{proof}
    The first bound follows from the standard degree-splitting argument used in~\cite{chiba1985arboricity}. We include the argument for completeness, since the second bound is a simple variation of the same proof.

    Orient every edge \(\overline{uv}\in E\) from the endpoint with smaller neighborhood to the endpoint with larger neighborhood, breaking ties arbitrarily. Let \(d^+(v)\) be the number of edges oriented out of \(v\). Then
    \[
        \sum_{\overline{uv}\in E} a_{\overline{uv}}
        =
        \sum_{v\in V} |N_G(v)|d^+(v).
    \]
    Let \(t=\sqrt{2m}\). If \(|N_G(v)|\leq t\), then \(|N_G(v)|d^+(v)\leq t d^+(v)\). Summing over all such vertices gives at most \(tm=\mathcal{O}(m^{3/2})\). If \(|N_G(v)|>t\), then every outgoing neighbor of \(v\) has degree at least \(|N_G(v)|\), so \(d^+(v)\leq 2m/|N_G(v)|\). Hence \(|N_G(v)|d^+(v)\leq 2m\). Since there are at most \(2m/t=\mathcal{O}(\sqrt m)\) vertices with degree larger than \(t\), their total contribution is \(\mathcal{O}(m^{3/2})\). This proves the first bound.

    For the second bound, using the same orientation gives
    \[
        \sum_{\overline{uv}\in E} a_{\overline{uv}}^2
        =
        \sum_{v\in V} |N_G(v)|^2 d^+(v).
    \]
    If \(|N_G(v)|\leq t\), then \(|N_G(v)|^2d^+(v)\leq t^2d^+(v)=2m\,d^+(v)\). Summing over these vertices gives \(\mathcal{O}(m^2)\). If \(|N_G(v)|>t\), then again \(d^+(v)\leq 2m/|N_G(v)|\), and therefore
    \[
        |N_G(v)|^2 d^+(v) \leq 2m|N_G(v)|.
    \]
    Summing over all vertices gives at most \(2m\sum_{v\in V}|N_G(v)|=4m^2\). Hence the second bound also follows.
\end{proof}

\begin{lemma}
    \label{lem:weighted-clique-degree-sum}
    For every integer \(k\geq 1\), let \(\mathcal K_k\) denote the set of all \(k\)-cliques in \(G\). Then
    \begin{equation*}
        \sum_{C\in \mathcal K_k}\min_{v\in C}\deg_G(v)
        =
        \mathcal{O}\!\left(m^{(k+1)/2}\right).
    \end{equation*}
\end{lemma}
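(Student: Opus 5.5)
We prove the bound by a charging argument in the spirit of Lemma~\ref{lem:common_neighborhood_intersections}. The idea is to charge each \(k\)-clique to its minimum-degree vertex and then count, for each vertex \(v\), how many cliques can have \(v\) as their minimum-degree vertex.

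The case \(k=1\) is immediate: \(\sum_{v}\deg_G(v)=2m=\mathcal{O}(m)\). The case \(k=2\) is exactly the first bound of Lemma~\ref{lem:common_neighborhood_intersections}, since \(\min_{v\in C}\deg_G(v)=a_{\overline{uv}}\) for \(C=\{u,v\}\). So assume \(k\geq 2\) and handle all such cases uniformly.

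Orient each edge from the endpoint of smaller degree to the endpoint of larger degree, breaking ties by the fixed vertex labels. This gives a total order, and therefore an acyclic orientation. Every \(k\)-clique \(C\) has a unique source \(v\), namely its minimum in this order. Then \(\min_{x\in C}\deg_G(x)=\deg_G(v)\), and \(C\setminus\{v\}\) is a \((k-1)\)-subset of \(N^{\to}(v)\), the set of out-neighbours of \(v\), with \(|N^{\to}(v)|=d^+(v)\). Hence
\[
\sum_{C\in\mathcal K_k}\min_{x\in C}\deg_G(x)\leq \sum_{v\in V}\deg_G(v)\,\min\Bigl\{\binom{d^+(v)}{k-1},\,C_{k-1}(G[N^{\to}(v)])\Bigr\}.
\]

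We bound the inner count in two ways. Trivially it is at most \(\binom{d^+(v)}{k-1}\). Also, as in the proof of Lemma~\ref{lem:common_neighborhood_intersections}, \(d^+(v)\leq\min\{\deg_G(v),\,2m/\deg_G(v)\}\leq\sqrt{2m}\). This holds because every out-neighbour of \(v\) has degree at least \(\deg_G(v)\), so \(d^+(v)\deg_G(v)\leq 2m\).

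For \(k\geq 2\), write \(\binom{d^+(v)}{k-1}\leq d^+(v)\cdot(d^+(v))^{k-2}\leq d^+(v)(2m)^{(k-2)/2}\). Then the summand is at most
\[
\deg_G(v)\,d^+(v)\,(2m)^{(k-2)/2}.
\]
The sum \(\sum_v \deg_G(v)d^+(v)\) is exactly \(\sum_{\overline{uv}\in E}a_{\overline{uv}}\), which is \(\mathcal{O}(m^{3/2})\) by Lemma~\ref{lem:common_neighborhood_intersections}. Multiplying gives \(\mathcal{O}(m^{3/2}\cdot m^{(k-2)/2})=\mathcal{O}(m^{(k+1)/2})\), as required.

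The main obstacle is the step in which we peel off one factor \(d^+(v)\) to pair with \(\deg_G(v)\) and bound the remaining \(k-2\) factors by \(\sqrt{2m}\). This step needs both the inequality \(d^+(v)\leq\sqrt{2m}\), which comes from the degree orientation, and the identity \(\sum_v\deg_G(v)d^+(v)=\sum_{e}a_e\), which relies on the orientation pointing toward the larger degree. We must check that tie-breaking by labels keeps \(\min_{x\in C}\deg_G(x)=\deg_G(v)\) for the source \(v\). It does, since the order is by degree first. Constants depending on \(k\) are absorbed into the \(\mathcal{O}\).
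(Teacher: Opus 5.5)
Your proof is correct, but for $k\ge 3$ it takes a different route from the paper.

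\textbf{The paper's route.} It also charges each $k$-clique to its first vertex $v$ in a nondecreasing-degree order. It then bounds the number of $(k-1)$-cliques inside $N^+(v)$ by the \emph{global} count $C_{k-1}=\mathcal{O}(m^{(k-1)/2})$ from Lemma~\ref{lem:number_of_k_cliques}, which rests on the external counting bound of \cite{eden2018approximating}. Multiplying by $\sum_v\deg_G(v)=2m$ finishes the argument.

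\textbf{Your route.} You bound the local count by $(d^+(v))^{k-1}$ and apply $d^+(v)\le\sqrt{2m}$ to $k-2$ of the factors. You then pair the remaining factor with $\deg_G(v)$, which reduces everything to $\sum_e a_e=\mathcal{O}(m^{3/2})$, i.e.\ to the $k=2$ case.

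\textbf{Comparison.} The paper's version is a two-line corollary of the clique-count lemma. It is wasteful, since it ignores that the cliques are confined to $N^+(v)$, yet it still reaches the right exponent. Your version is self-contained: it depends only on the degree-splitting argument behind Lemma~\ref{lem:common_neighborhood_intersections}, and it treats $k=2$ and $k\ge 3$ uniformly. With unit weights in place of $\deg_G(v)$, it even reproves $C_k\le m(2m)^{(k-2)/2}$, so Lemma~\ref{lem:number_of_k_cliques} is not needed for this purpose.

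Your tie-breaking by labels is exactly what the argument requires. It gives a total order, hence a unique source per clique, and it ensures every out-neighbour has degree at least $\deg_G(v)$, which yields both $d^+(v)\le\sqrt{2m}$ and $\sum_v\deg_G(v)d^+(v)=\sum_e a_e$. Treating $k=2$ twice is redundant but harmless.
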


\begin{proof}
    The case \(k=1\) is immediate, since
    \(\sum_{v\in V}\deg_G(v)=2m\).

    For \(k=2\), the desired bound follows directly from the first bound of Lemma~\ref{lem:common_neighborhood_intersections}.

    Assume \(k\geq 3\). Order the vertices by nondecreasing degree, breaking ties arbitrarily, and let \(N^+(v)\) be the set of neighbors of \(v\) that appear after \(v\) in this order.

    Every \(k\)-clique has a unique first vertex \(v\) in this order. This vertex has minimum degree in the clique, and the remaining \(k-1\) vertices form a \((k-1)\)-clique in \(G[N^+(v)]\). Therefore
    \begin{equation*}
        \sum_{K\in\mathcal K_k}\min_{u\in K}\deg_G(u)
        =
        \sum_{v\in V}\deg_G(v)\,
        \left|\{K\in\mathcal K_{k-1} \mid K \subseteq N^+(v)\}\right|.
    \end{equation*}
    Let \(C_{k-1}:=|\mathcal K_{k-1}|\). Since \(G[N^+(v)]\) is an induced subgraph of \(G\), it contains at most \(C_{k-1}\) cliques of size \(k-1\). Hence, by Lemma~\ref{lem:number_of_k_cliques} and since \(k-1\geq 2\),
    \begin{equation*}
        \sum_{K\in\mathcal K_k}\min_{u\in K}\deg_G(u)
        \leq
        \sum_{v\in V}\deg_G(v)C_{k-1}
        =
        2mC_{k-1}
        =
        \mathcal{O}\!\left(m\cdot m^{(k-1)/2}\right)
        =
        \mathcal{O}\!\left(m^{(k+1)/2}\right).
    \end{equation*}
\end{proof}
Now we can start proving Proposition~\ref{prop:k-d-omega-truss-comp}.
\subsection{Correctness}

We first prove the correctness statement from Proposition~\ref{prop:k-d-omega-truss-comp}: if \(\omega^u\) is increasing, then the graph \(G'\) returned by Algorithm~\ref{alg:k,d,omega-truss} is the \((k,d,\omega^u)\)-truss of \(G\).

We begin by showing that \(C_{G,k,d,\omega^u}\) is computed by \textsc{cond}. Consider a fixed edge \(\overline{uv}\). We first show that the recursion in \(\textsc{cond}(u,v,G)\) considers each \((d+2)\)-clique containing \(\overline{uv}\) exactly once. Let \(K=\{u,v,w_1,\ldots,w_d\}\) be such a clique, and assume that the vertices \(w_1,\ldots,w_d\) are ordered increasingly according to their labels. Initially, the branching set is \(B=N_G(u)\cap N_G(v)\), and therefore it contains all vertices \(w_1,\ldots,w_d\). After the recursion chooses \(w_1\), the next branching set becomes \(B\cap N_G^+(w_1)\), which still contains \(w_2,\ldots,w_d\), since \(K\) is a clique and \(w_1<w_2<\cdots<w_d\). Repeating this argument, the recursion has a branch that chooses \(w_1,\ldots,w_d\) in this order.

The branch is unique, because after choosing \(w_j\), the recursion keeps only forward neighbors of \(w_j\). Hence no later step can choose a vertex that appears before \(w_j\), and every \((d+2)\)-clique containing \(\overline{uv}\) is considered exactly once.

It remains to check that the tested set is correct. Let \(R\) denote the set of vertices selected by the recursion in \textsc{cond}. During the recursion, the set \(W\) is maintained as
\[
    W=N_G(u)\cap N_G(v)\cap \bigcap_{x\in R}N_G(x).
\]
Hence, when \(|R|=d\), the set \(W\) is exactly the witnessing common neighborhood \(W_G(\{u,v\}\cup R)\). Therefore the procedure tests whether
\[
    \omega^u(G[W_G(\{u,v\}\cup R)])\geq k-d,
\]
as required in Definition~\ref{def:C}. Consequently, \textsc{cond}\((u,v,G)\) returns true exactly when \(C_{G,k,d,\omega^u}(u,v)\) holds. Thus the algorithm removes exactly those edges for which the \((k,d,\omega^u)\)-truss condition is not satisfied in the current graph.

Initially, all edges are inserted into \(Q_E\), so every edge is checked at least once. If an edge is checked and not removed, then it has a witnessing clique. Later edge removals may destroy such a witness, and then the edge has to be checked again. We therefore have to show that every edge whose condition may change is inserted back into \(Q_E\).

Consider one iteration in which the algorithm removes the edge \(\overline{v_1v_2}\), and let \(S=N_G(v_1)\cap N_G(v_2)\) be its common neighborhood before removal. Suppose that deleting \(\overline{v_1v_2}\) can change the condition \(C_{G,k,d,\omega^u}(x,y)\) of an edge \(\overline{xy} \not\in Q_E\) from true to false. Then, before the deletion, \(\overline{xy}\) has a witnessing clique; let \(K_{\overline{xy}}\) be one such clique in the sense of Definition~\ref{def:C}. Write \(W_{\overline{xy}} := W_G(K_{\overline{xy}})\). The condition \(C_{G,k,d,\omega^u}(x,y)\) depends only on the existence of \(K_{\overline{xy}}\) and on the value of \(\omega^u\) on \(G[W_{\overline{xy}}]\). Therefore this particular witness can be invalidated only if \(\overline{v_1v_2}\) is an edge of \(G[K_{\overline{xy}}\cup W_{\overline{xy}}]\). We claim that, if the condition of an edge \(\overline{xy}\neq \overline{v_1v_2}\) can be affected by deleting \(\overline{v_1v_2}\), then \(\overline{xy}\) belongs to one of the two sets inserted into \(Q_E\) by the algorithm. Let
\[
    U_1:=\{\overline{wv_1},\overline{wv_2}\mid w\in S\},
    \qquad
    U_2:=\{\overline{w_1w_2}\mid w_1\in S,\; w_2\in S\cap N_G(w_1)\}.
\]
We show that \(\overline{xy}\in U_1\cup U_2\). We verify the claim by considering the possible positions of \(v_1\) and \(v_2\) in \(K_{\overline{xy}}\cup W_{\overline{xy}}\):
\begin{itemize}
    \item If both \(v_1\) and \(v_2\) belong to \(K_{\overline{xy}}\), then every other vertex of \(K_{\overline{xy}}\) is adjacent to both \(v_1\) and \(v_2\), and therefore belongs to \(S=N_G(v_1)\cap N_G(v_2)\). Since \(\overline{xy}\neq \overline{v_1v_2}\), either \(\overline{xy}\) is incident to one of \(v_1,v_2\), in which case \(\overline{xy}\in U_1\), or both endpoints of \(\overline{xy}\) belong to \(S\), in which case \(\overline{xy}\in U_2\). In the special case \(d=0\), the second update is not needed here, since \(K_{\overline{xy}}=\{x,y\}\), and therefore this case can occur only for the removed edge \(\overline{xy}=\overline{v_1v_2}\).

    \item Suppose that one endpoint of \(\overline{v_1v_2}\) belongs to \(K_{\overline{xy}}\) and the other belongs to \(W_{\overline{xy}}\). Without loss of generality, let \(v_1\in K_{\overline{xy}}\) and \(v_2\in W_{\overline{xy}}\). If \(\overline{xy}\) is of the form \(\overline{v_1y}\) or \(\overline{xv_1}\), then the other endpoint, \(y\) or \(x\), is adjacent to both \(v_1\) and \(v_2\), and therefore \(\overline{xy}\in U_1\). Otherwise, both \(x\) and \(y\) are different from \(v_1\). Since \(K_{\overline{xy}}\) is a clique and \(v_2\in W_{\overline{xy}}\), both \(x\) and \(y\) are adjacent to \(v_1\) and \(v_2\). Hence \(x,y\in S\), so \(\overline{xy}\in U_2\). In the special case \(d=0\), the second update is not needed here, since \(K_{\overline{xy}}=\{x,y\}\), so only the possibility where \(\overline{xy}\) contains \(v_1\) can occur, and this is covered by the first update.

    \item If both \(v_1\) and \(v_2\) belong to \(W_{\overline{xy}}\), then both are adjacent to every vertex of \(K_{\overline{xy}}\), in particular to \(x\) and \(y\). Hence \(x,y\in S\), so \(\overline{xy}\in U_2\). In the special case where \(\omega^u\) is not edge-sensitive, the second update is not needed here, since deleting an edge inside \(G[W_{\overline{xy}}]\) cannot change the value of \(\omega^u(G[W_{\overline{xy}}])\).
\end{itemize}

Thus every potentially affected edge different from \(\overline{v_1v_2}\) is inserted into \(Q_E\) by the first update or by the second update.

It remains to show that, if \(\omega^u\) is increasing, no edge belonging to the \((k,d,\omega^u)\)-truss is removed. Let \(L\) be a largest spanning subgraph of \(G\) such that \(C_{L,k,d,\omega^u}(u,v)\) holds for every edge \(\overline{uv}\in E(L)\), and suppose that the algorithm removes an edge of \(L\). Let \(\overline{uv}\) be the first such edge, and let \(H\) be the current graph right before the moment when \(\overline{uv}\) is removed. Since no edge of \(L\) was removed before \(\overline{uv}\), we have \(L \subseteq H\). Let \(K_{\overline{uv}}\) be a witnessing clique for \(\overline{uv}\) in \(L\), and let \(W_L(K_{\overline{uv}})\) be its witnessing common neighborhood. Since \(L \subseteq H\), the clique \(K_{\overline{uv}}\) is also present in \(H\), and \(W_L(K_{\overline{uv}}) \subseteq W_H(K_{\overline{uv}})\). Since \(\omega^u\) is increasing, the same clique also witnesses \(C_{H,k,d,\omega^u}(u,v)\), contradicting the rule by which \(\overline{uv}\) is removed from \(H\). Therefore all edges of \(L\) remain in the output graph \(G'\). Since \(G'\) satisfies \(C_{G',k,d,\omega^u}(u,v)\) for every edge \(\overline{uv}\in E(G')\), and \(L\) is a largest spanning subgraph with this property, the output cannot contain any edge outside \(L\). Hence \(G'=L\).

\subsection{Complexity}
For the purpose of the running-time analysis, we ignore isolated vertices in the edge-based part of the algorithm. They can be detected and ignored in \(\mathcal{O}(n+m)\) time. Hence the final running-time bounds include an additional \(\mathcal{O}(n+m)\) term for this preprocessing step.

We first bound the number of processed edge occurrences. The initial queue contains \(m\) edges. We bound the number of insertions into \(Q_E\) after the initial one by summing over all edge removals, since the update rules are applied only when an edge is removed. First consider the update
\[
    Q_E \gets Q_E \cup \{\overline{wv_1},\overline{wv_2} \mid w\in N_G(v_1)\cap N_G(v_2)\}
\]
performed when the edge \(\overline{v_1v_2}\) is removed. This update inserts at most \(2|N_G(v_1)\cap N_G(v_2)|\) edges. For every \(w\in N_G(v_1)\cap N_G(v_2)\), the vertices \(\{v_1,v_2,w\}\) form a triangle in the graph before the removal of \(\overline{v_1v_2}\), and the update inserts the two remaining edges of this triangle. We charge these insertions to that triangle. Each original triangle is charged at most once, namely when its first edge is removed. Hence the total number of insertions caused by this update is \(\mathcal{O}(C_3)=\mathcal{O}(m^{3/2})\), where the last equality follows from Lemma~\ref{lem:number_of_k_cliques}. If \(d>0\) or \(\omega^u\) is edge-sensitive, the algorithm also performs the update
\[
    Q_E \gets Q_E \cup \{\overline{w_1w_2} \mid w_1\in S,\; w_2\in S\cap N_G(w_1)\},
\]
where \(S=N_G(v_1)\cap N_G(v_2)\). For a fixed removed edge \(\overline{v_1v_2}\), this update inserts all edges inside \(G[S]\). For every inserted edge \(\overline{w_1w_2}\), the set \(\{v_1,v_2,w_1,w_2\}\) forms a \(4\)-clique in the graph before the removal. By the same charging argument as above, each original \(4\)-clique is charged at most once. Hence the total number of insertions caused by this update is \(\mathcal{O}(C_4)=\mathcal{O}(m^2)\), where the last equality follows from Lemma~\ref{lem:number_of_k_cliques}. Thus the algorithm processes \(\mathcal{O}(m^2)\) edge occurrences when this second update is used, and \(\mathcal{O}(m^{3/2})\) edge occurrences when \(d=0\) and \(\omega^u\) is not edge-sensitive.

We have now bounded the total number of edge occurrences processed by the algorithm. It remains to bound the time needed to process one such occurrence, which depends on the amount of space allocated to the algorithm.

\subsubsection*{Assuming space complexity \(\mathcal{O}(n+m + w_s)\)}
We now prove the running-time bounds for the
\(\mathcal{O}(n+m+w_s)\)-space implementation:
\begin{itemize}
    \item if \(d>0\) or \(\omega^u\) is edge-sensitive, then the running time is
          \(\mathcal{O}(n+(w+\sqrt{m})m^{(d+4)/2})\);
    \item if \(d=0\) and \(\omega^u\) is not edge-sensitive, then the running time is
          \(\mathcal{O}(n+(w+\sqrt{m})m^{3/2})\).
\end{itemize}

First assume that \(d>0\) or that \(\omega^u\) is edge-sensitive. In this
case the second queue update is used, and by the preceding argument the
algorithm processes at most \(\mathcal{O}(m^2)\) edge occurrences. We now
bound the amortized cost of processing one such occurrence.

We analyse one call of \(\textsc{cond}\). We will analyse it by considering the number of nodes in the recursive tree. We have to prove two thing, the cost of all \(\omega^u\) computations, and the cost of all intersections \(W \cap N_G(v)\). Since intersections \(B \cap N_G^+(v)\) are always intersecting sets that are not larger than intersections \(W \cap N_G(v)\), we don't have to consider them since they don't increase asymptotic time complexity.

We first bound the cost of the \(\omega^u\)-computations in one call of
\textsc{cond}. Such computations are performed only at leaves of the recursion,
that is, when \(|R|=d\). For a fixed processed edge \(\overline{uv}\), each
leaf corresponds uniquely to a \(d\)-clique \(R\) in
\(G[N_G(u)\cap N_G(v)]\). Equivalently, \(\{u,v\}\cup R\) is a
\((d+2)\)-clique containing \(\overline{uv}\).

If \(d=0\), there is only one leaf, and hence the cost of the
\(\omega^u\)-computation is \(\mathcal{O}(w)\) per call.

If \(d=1\), then one call computes \(\omega^u\) once for each vertex in
\(N_G(u)\cap N_G(v)\). We bound this cost amortized over the whole algorithm.
An edge can be processed at most \(\mathcal{O}(m)\) times, since it can be
inserted into the queue only after an edge removal. Moreover,
\[
    \sum_{\overline{uv}\in E}|N_G(u)\cap N_G(v)|
    =
    3C_3
    =
    \mathcal{O}(m^{3/2})
\]
by Lemma~\ref{lem:number_of_k_cliques}. Therefore the total number of
\(\omega^u\)-computations over all calls is at most
\[
    \mathcal{O}\!\left(
    m\sum_{\overline{uv}\in E}|N_G(u)\cap N_G(v)|
    \right)
    =
    \mathcal{O}(m^{5/2}).
\]
Thus their total cost is \(\mathcal{O}(wm^{5/2})\), or
\(\mathcal{O}(wm^{1/2})\) amortized per processed edge occurrence.

Finally, suppose that \(d\geq 2\). For a fixed call of \textsc{cond}, the
number of leaves is at most the number of \(d\)-cliques in \(G\). By
Lemma~\ref{lem:number_of_k_cliques}, this number is
\(\mathcal{O}(m^{d/2})\). Hence, for every \(d\geq 0\), the amortized cost of the \(\omega^u\)-computations per processed edge occurrence is \(\mathcal{O}(wm^{d/2})\).
We next bound the cost of the intersections. First consider the initial
intersection \(N_G(v_1)\cap N_G(v_2)\) computed before the recursive call. We
compute it by scanning the smaller neighborhood and testing membership in the
other one in constant time. Thus, for an edge \(\overline{uv}\), this costs
\[
    \mathcal{O}\!\left(\min\{|N_G(u)|,|N_G(v)|\}\right).
\]
Since the graph only loses edges during the algorithm, this cost is bounded by
the same quantity in the original graph. Moreover, each edge can be processed
at most \(\mathcal{O}(m)\) times. Hence the total cost of all such initial
intersections is at most
\[
    \mathcal{O}\!\left(
    m\sum_{\overline{uv}\in E}
    \min\{|N_G(u)|,|N_G(v)|\}
    \right)
    =
    \mathcal{O}(m^{5/2}),
\]
by Lemma~\ref{lem:common_neighborhood_intersections}. Since in the present
case the algorithm processes at most \(\mathcal{O}(m^2)\) edge occurrences,
this corresponds to an \(\mathcal{O}(\sqrt m)\) amortized contribution per
processed edge occurrence.

We now bound the cost of the intersections \(W\cap N_G(v)\) computed inside the recursive calls. Consider a clique \(R\) chosen by the recursion, where \(1\leq |R|\leq d\). The intersection that creates the node corresponding to \(R\) has the form \(W'\cap N_G(x)\), where \(x\in R\) is the last added vertex. Since we are intersecting \(W'\) with \(N_G(x)\), where \(W'\subseteq N_G(y)\) for every \(y\in R\setminus\{x\}\), the cost of this intersection is \(\mathcal{O}(\min_{y\in R}\deg_G(y))\). Since each recursive node is represented by a unique clique \(R\), the total time used for these intersections is bounded by Lemma~\ref{lem:weighted-clique-degree-sum}:
\[
    \sum_{r=1}^{d}\sum_{R\in\mathcal K_r}\min_{v\in R}\deg_G(v)
    =
    \sum_{r=1}^{d}\mathcal{O}\!\left(m^{(r+1)/2}\right)
    =
    \mathcal{O}\!\left(m^{(d+1)/2}\right).
\]
The last equality follows because this is a geometric sum dominated by its last term. Thus the total cost of the initial intersection and all recursive intersections in one call of \textsc{cond} is \(\mathcal{O}(m^{(d+1)/2})\).

Combining the amortized cost of the \(\omega^u\)-computations with the amortized cost of the intersections, the amortized cost per processed edge occurrence is
\[
    \mathcal{O}\!\left(wm^{d/2}+m^{(d+1)/2}\right)
    =
    \mathcal{O}\!\left((w+\sqrt m)m^{d/2}\right).
\]
Since, in the present case, the algorithm processes \(\mathcal{O}(m^2)\) edge occurrences, together with the preprocessing step for isolated vertices, the total running time is
\[
    \mathcal{O}\!\left(n+(w+\sqrt m)m^{(d+4)/2}\right).
\]

It remains to consider the case \(d=0\) where \(\omega^u\) is not edge-sensitive. Then the recursion has no internal intersections and only one \(\omega^u\)-computation is performed per call. For an edge \(e=\overline{uv}\), let \(a_e=\min\{|N_G(u)|,|N_G(v)|\}\) and \(t_e=|N_G(u)\cap N_G(v)|\), where both quantities are taken in the original graph. The edge \(e\) is processed once initially, and later it can be inserted into \(Q_E\) only when one of the other two edges of a triangle containing \(e\) is removed. Hence \(e\) is processed at most \(\mathcal{O}(1+t_e)\) times. Each computation of \(N_G(u)\cap N_G(v)\) costs at most \(\mathcal{O}(a_e)\), since the graph only loses edges during the algorithm. Therefore the total cost of all initial intersections is
\[
    \mathcal{O}\!\left(\sum_{e\in E}(1+t_e)a_e\right)
    \leq
    \mathcal{O}\!\left(\sum_{e\in E}a_e+\sum_{e\in E}a_e^2\right)
    =
    \mathcal{O}(m^2),
\]
by Lemma~\ref{lem:common_neighborhood_intersections}, using \(t_e\leq a_e\). Since the algorithm processes \(\mathcal{O}(m^{3/2})\) edge occurrences, the total cost of the \(\omega^u\)-computations is \(\mathcal{O}(wm^{3/2})\). Together with the preprocessing step for isolated vertices, the total running time is
\[
    \mathcal{O}\!\left(n+(w+\sqrt m)m^{3/2}\right).
\]

\subsubsection*{Assuming space complexity \(\mathcal{O}(n+m^\frac{d+3}{2}+ w_s) \)}
Finally, we prove the running-time bounds for the higher-space implementation with space complexity \(\mathcal{O}(n+m^{(d+3)/2}+w_s)\):
\begin{itemize}
    \item if \(d>0\) or \(\omega^u\) is edge-sensitive, then the running time is \(\mathcal{O}(n+wm^{(d+4)/2})\);
    \item if \(d=0\) and \(\omega^u\) is not edge-sensitive, then the running time is \(\mathcal{O}(n+wm^{3/2})\).
\end{itemize}

With increased space usage, we precompute and maintain the common neighborhoods needed at the last recursion level of \textsc{cond}. For every clique \(K\) of size \(d+2\), we store
\(
W_G(K)=\bigcap_{v\in K}N_G(v).
\)
The sets are stored in a dictionary indexed by \(K\), so the corresponding common neighborhood can be obtained by an \(\mathcal{O}(1)\)-time lookup.

The space usage is bounded by the total number of stored vertex occurrences. We store each clique according to the order of the vertex labels, and therefore each clique has a unique representation in the dictionary. Thus each common neighborhood is stored only once. If \(|K|=d+2\), then every vertex \(x\in W_G(K)\) gives a \((d+3)\)-clique \(K\cup\{x\}\). Conversely, each \((d+3)\)-clique contributes to at most \(d+3\) such stored occurrences, so the total number of stored occurrences is at most \((d+3)C_{d+3}\), where \(C_k\) denotes the number of \(k\)-cliques in \(G\). From the proof of Lemma~\ref{lem:number_of_k_cliques}, we have
\(
C_k \leq m\binom{\sqrt m}{k-2}.
\)
Hence the total number of stored vertex occurrences is at most
\[
    (d+3)C_{d+3}
    \leq
    (d+3)m\binom{\sqrt m}{d+1}
    \leq
    \frac{d+3}{(d+1)!}m^{(d+3)/2}
    =
    \mathcal{O}\!\left(m^{(d+3)/2}\right).
\]
Thus the total space needed is \(\mathcal{O}(n+m^{(d+3)/2}+w_s)\).

The same bound applies to preprocessing and maintenance time. The common neighborhoods are computed once at the beginning and then updated during the peeling process. By Lemma~\ref{lem:weighted-clique-degree-sum}, the total time needed to construct all stored common neighborhoods is \(\mathcal{O}(m^{(d+3)/2})\). Since the graph only loses vertices and edges, stored common neighborhoods only lose elements. When an edge \(\overline{uv}\) is removed, the affected stored sets can be found by enumerating the relevant cliques in \(N_G(u)\cap N_G(v)\). The stored cliques that contain both \(u\) and \(v\) are deleted, and this deletion is charged to the corresponding clique, while for stored cliques containing exactly one of \(u\) and \(v\), the other endpoint is removed from the corresponding common neighborhood. Each such update can be charged to a stored vertex occurrence that disappears after the deletion. Hence each stored occurrence is updated at most once, and the total update time is also \(\mathcal{O}(m^{(d+3)/2})\).

It remains to bound the cost of the calls to \textsc{cond}. Since the common neighborhoods at the last recursion level are stored, the algorithm does not recompute the final intersections \(W\cap N_G(v)\). Thus the intersection cost is the same as in the \(\mathcal{O}(n+m+w_s)\)-space implementation, but with one recursion level less. By Lemma~\ref{lem:weighted-clique-degree-sum}, this gives an amortized intersection cost of \(\mathcal{O}(m^{d/2})\) per processed edge occurrence. The number of leaves in one call is still at most \(\mathcal{O}(m^{d/2})\), and therefore the cost of the \(\omega^u\)-computations in one call is \(\mathcal{O}(wm^{d/2})\). Hence the amortized cost per processed edge occurrence is \(\mathcal{O}((1+w)m^{d/2})=\mathcal{O}(wm^{d/2})\).

If \(d>0\) or \(\omega^u\) is edge-sensitive, then the algorithm processes \(\mathcal{O}(m^2)\) edge occurrences. Together with the preprocessing step for isolated vertices and the update cost for the stored common neighborhoods, this gives
\[
    \mathcal{O}\!\left(
    n+m^{(d+3)/2}
    +
    m^2\cdot m^{d/2}
    +
    wm^2\cdot m^{d/2}
    \right)
    =
    \mathcal{O}\!\left(n+wm^{(d+4)/2}\right),
\]
where we use that \(w=\Omega(1)\).
The \(\omega^u\)-computations now remain the dominant term, so storing common neighborhoods for additional recursion levels would not improve the stated asymptotic bound.

In the special case \(d=0\) and \(\omega^u\) is not edge-sensitive, the common neighborhoods of edges are stored. Therefore each call to \textsc{cond} only performs one lookup and one computation of \(\omega^u\). Since the algorithm processes \(\mathcal{O}(m^{3/2})\) edge occurrences, together with the preprocessing step for isolated vertices, the total running time is

\[
    \mathcal{O}\!\left(n+m^{3/2}+wm^{3/2}\right)
    =
    \mathcal{O}\!\left(n+wm^{3/2}\right).
\]

\section{Practical implementations of reduction methods}
\label{sec:practical_reductions}
In this section, we describe the practical implementations of the reduction methods used later in our framework for improving upper-bound values. We consider two reduction methods. The first combines the truss and core reductions developed above, while the second is based on repeated applications of structions. These implementations are designed for the computational experiments and will serve as the concrete reduction procedures used by the upper-bound value improvement algorithm.

\subsection{Combining \((k,\omega^u)\)-core and \((k,d,\omega^u)\)-truss}

While the \((k,\omega^u)\)-core and the \((k,d,\omega^u)\)-truss both give valid reductions when applied separately, they remove different parts of the graph: the core removes vertices, while the truss removes edges. In practice, we want to reduce both vertices and edges. We therefore combine the two procedures by alternating calls to \textsc{core} and \textsc{truss}, as shown in Algorithm~\ref{alg:truss-core}.

Note that the \textsc{cond\_mod} is modified in this implementation in order to prune the recursion earlier, before all branches reach depth \(d\). We did not use this version in Definition~\ref{def:C}, because the resulting condition would depend on the order in which vertices are processed. Therefore, we define the theoretical condition in a vertex-order-independent way, while the modified version is only an implementation choice used to speed up the algorithm in our setting. As shown below, this modification still preserves the clique-preservation property, which is the only property needed in our experiments.

Also note that both \textsc{core} and \textsc{truss} are slightly modified so that they both update \(Q_E\) and \(Q_V\) sets. This allows us to prove that the combined procedure has the same asymptotic running time as the truss procedure alone.

Another small implementation detail is the dictionary \(L\), which stores previously found witnessing cliques. When an edge is checked again, the algorithm first tests whether the stored witness is still valid. If it is, the full recursive search can be skipped. This does not change the asymptotic running time, but since checking a stored witness is cheaper than searching for one, it can provide practical speedups.

In this implementation, we use the \(\mathcal{O}(n+m+w_s)\)-space version of the truss procedure. We could also use the higher-space version if desired, since it gives a better theoretical running-time bound. However, it can be more limiting in practice because it stores many common neighborhoods. Moreover, when the computation of \(\omega^u\) costs more than \(\sqrt m\), the two bounds are essentially governed by the same cost of computing \(\omega^u\), so the practical benefit of the higher-space version is smaller. The higher-space version is included mainly for theoretical completeness and to show that, in the special case \(d=0\) with the trivial upper-bound function, our general algorithm matches the asymptotic running time of specialized state-of-the-art truss algorithms. The \(\mathcal{O}(n+m+w_s)\)-space version is therefore less demanding in memory and is the one used in our implementation. Strictly speaking, the combined implementation uses \(\mathcal{O}(n+(d+1)m+w_s)\) space, because the dictionary \(L\) stores at most one witnessing clique for each edge, and each such witness contains \(\mathcal{O}(d+2)\) vertices. This is still acceptable in practice, because \(d\) is small in the intended applications.

\begin{algorithm}[H]
    \caption{The truss and core computation}
    \label{alg:truss-core}

    \DontPrintSemicolon
    \small
    \SetAlgoLined
    \LinesNumbered
    \SetAlgoNlRelativeSize{-1}
    \SetKwInOut{Input}{Input}
    \SetKwInOut{Output}{Output}

    \SetKw{KwGlobal}{global}
    \SetKw{KwConst}{const}

    \SetKwProg{Proc}{ }{}{ }

    \Input{Graph \(G=(V,E)\), integers \(k,d\) with \(0 \leq d \leq k\), MCP upper-bound function \(\omega^u\)}
    \Output{Subgraph \(G'\) of \(G\)}
    \BlankLine
    \KwGlobal Set of vertices \(Q_V\), set of edges \(Q_E\), dictionary of witnessing cliques \(L\)\;
    \KwConst \KwGlobal \(k,d,\omega^u\)\;

    \BlankLine
    \Proc{\textsc{cond\_mod}\((v_1,v_2,G)\)}{
    \Proc{\textsc{c-rec\((R,W,B)\)}}{
        \lIf{\(\omega^u(G[W]) < k-d\)}{\Return \textbf{false}}
        \If{\(|R| = d\)}
        {
            \lIf{\(L[\overline{v_1v_2}]= \emptyset\)}{\(\forall \overline{uv} \in E(G[R\cup\{v_1,v_2\}]) : L[\overline{uv}] \gets R\cup\{v_1,v_2\}\)}
            \Return \textbf{true}\;
        }
        \Return \(\bigvee_{v \in B} \textsc{c-rec}(R \cup \{v\},\; W\cap N_G(v),\; B\cap N_G^+(v))\)\;
        }
        \BlankLine
        \(S\gets N_G(v_1)\cap N_G(v_2)\)\;
        \Return \textsc{c-rec\((\emptyset,\; S,\; S)\)}\;
    }

    \BlankLine

    \Proc{\textsc{core}\((G,k,\omega^u)\)}{
        \While{\(Q_V \neq \emptyset\)}{
            \(v \leftarrow \textsc{pick\_and\_remove}(Q_V)\)\;
            \If{\(v\in V\) \textbf{and} \(\omega^u(N_G(v)) < k\)}{
                \(Q_V \gets Q_V \cup N_G(v)\)\;
                \(Q_E \gets Q_E \cup \{\overline{w_1w_2} \;|\; w_1 \in N_G(v),\; w_2 \in N_G(v)\cap N_G(w_1)\}\)\;
                \(G \gets G \setminus v\)\;
            }
        }
    }

    \BlankLine

    \Proc{\textsc{truss}\((G,k,d,\omega^u)\)}{
        \While{\(Q_E \neq \emptyset\)}{
            \(\overline{v_1v_2} \leftarrow \textsc{pick\_and\_remove}(Q_E)\)\;
            \lIf{\(L[\overline{v_1v_2}]\) is not a witness for \(\overline{v_1v_2}\)}{\(L[\overline{v_1v_2}] \gets \emptyset\)}
            \If{\(\overline{v_1v_2} \in E\) \textbf{and} \(L[\overline{v_1v_2}] = \emptyset\) \textbf{and} \(\neg \textsc{cond\_mod}(v_1,v_2,G)\)}{
                \(S \gets N_G(v_1)\cap N_G(v_2)\)\;
                \(Q_V \gets Q_V \cup \{v_1,v_2\} \cup S\)\;
                \(Q_E \gets Q_E \cup \{\overline{wv_1},\overline{wv_2} \;|\; w \in S\}\)\;
                \If{\(d > 0\)}{
                    \(Q_E \gets Q_E \cup \{\overline{w_1w_2} \;|\; w_1 \in S,\; w_2 \in S\cap N_G(w_1)\}\)\;
                }
                \(G \gets G \setminus \overline{v_1v_2}\)\;
            }
        }
    }

    \BlankLine

    \Proc{\textsc{truss\_core}\((G,k,d,\omega^u)\)}{
        \(Q_V \gets V, \;Q_E \gets E\)\;

        \While{\(Q_V \neq \emptyset\) \textbf{or} \(Q_E \neq \emptyset\)}{
            \textsc{core}\((G,k,\omega^u)\)\;
            \lIf{$k\geq1$}{\textsc{truss}\((G,k-1,d,\omega^u)\)}
        }

        \Return \(G\)\;
    }
\end{algorithm}

\begin{proposition}
    \label{prop:truss-core-comp}
    Let \(G'=\textsc{truss\_core}(G,k,d,\omega^u)\), let \(n=|V(G)|\), let \(m=|E(G)|\), let \(w\) be the amortized time for one computation of \(\omega^u\), and let \(w_s\) be the space used for one computation of \(\omega^u\). Assume that the algorithm is implemented with \(\mathcal{O}(n+(d+1)m+w_s)\) space. Then the following running-time bounds hold:
    \begin{itemize}
        \item If \(d>0\) or \(\omega^u\) is edge-sensitive, then the running time is
              \(\mathcal{O}\!\left(n+(w+\sqrt{m})m^{\frac{d+4}{2}}\right)\).
        \item If \(d=0\) and \(\omega^u\) is not edge-sensitive, then the running time is
              \(\mathcal{O}\!\left(n+(w+\sqrt{m})m^{\frac{3}{2}}\right)\).
    \end{itemize}
\end{proposition}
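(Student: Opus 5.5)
The plan is to reduce the running time of \textsc{truss\_core} to the analysis already carried out for Algorithm~\ref{alg:k,d,omega-truss} in Section~\ref{sec:truss-correctness-complexity}. The cost splits into four parts: (i) the preprocessing of isolated vertices, which is $\mathcal{O}(n+m)$; (ii) the total number of processed vertex occurrences in $Q_V$ and their cost; (iii) the total number of processed edge occurrences in $Q_E$; and (iv) the cost per processed edge occurrence inside \textsc{truss}, including the witness dictionary $L$ and the early pruning in \textsc{cond\_mod}. The outer \textbf{while} loop adds no cost of its own. Each iteration either finds both queues empty and stops, or pops at least one element inside \textsc{core} or \textsc{truss}. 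So its overhead is absorbed into the number of pops.

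For (iii), I will use the same charging as in the proof of Proposition~\ref{prop:k-d-omega-truss-comp}. In \textsc{truss}, the insertions $\{\overline{wv_1},\overline{wv_2}\}$ are charged to the triangle $\{v_1,v_2,w\}$, and the insertions inside $G[S]$ are charged to the $4$-clique $\{v_1,v_2,w_1,w_2\}$. Both objects are destroyed by the removal, so each is charged at most once. In \textsc{core}, removing $v$ inserts every edge of $G[N_G(v)]$. Each such edge $\overline{w_1w_2}$ is charged to the triangle $\{v,w_1,w_2\}$, which disappears with $v$.

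Every triangle and every $4$-clique is destroyed at most once, whether by an edge or a vertex deletion. By Lemma~\ref{lem:number_of_k_cliques}, the total number of edge occurrences is therefore $\mathcal{O}(m+C_3+C_4)=\mathcal{O}(m^2)$. In the case $d=0$ with non-edge-sensitive $\omega^u$, the $4$-clique updates are absent, which gives $\mathcal{O}(m^{3/2})$.

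The code applies the $G[S]$ update only when $d>0$, but the core's $G[N(v)]$ update contributes only triangle-charged insertions. That is exactly why the second case still gets $m^{3/2}$. In the edge-sensitive $d=0$ case the bound $m^2$ is just an upper bound, so it is harmless.

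For (ii), $Q_V$ receives $n$ initial vertices, plus $N_G(v)$ per vertex removal, which totals at most $2m$ as in Proposition~\ref{prop:k,omega-core_corr_comp}. It also receives $\{v_1,v_2\}\cup S$ per edge removal, which totals at most $2m+3C_3=\mathcal{O}(m^{3/2})$. Each processed vertex costs $\mathcal{O}(w)$ for $\omega^u$. When $v$ is removed, enumerating the edges of $G[N_G(v)]$ takes $\mathcal{O}(\sum_{w_1\in N(v)}\min\{\deg w_1,\deg v\})$, which summed over all removed vertices is $\mathcal{O}(m^{3/2})$ by Lemma~\ref{lem:common_neighborhood_intersections}. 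Altogether this part is $\mathcal{O}(n+wm^{3/2})$, dominated by both target bounds.

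For (iv), I will argue that each edge-occurrence processing in \textsc{truss} costs at most what \textsc{cond} costs in the analysis of Proposition~\ref{prop:k-d-omega-truss-comp}, plus $\mathcal{O}(d+2)=\mathcal{O}(1)$ overhead, since $d$ is treated as a constant. Checking whether the stored witness $L[\overline{v_1v_2}]$ is still valid means verifying $\mathcal{O}(d^2)$ adjacencies and recomputing one common neighborhood and one $\omega^u$. That costs $\mathcal{O}(m^{(d+1)/2}+w)$, within the per-occurrence bound. Writing $L$ on success touches $\mathcal{O}(d^2)$ entries.

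\textsc{cond\_mod} explores a subtree of the recursion tree of \textsc{cond}, with extra $\omega^u$ evaluations at internal nodes. The number of internal nodes is bounded by the number of $r$-cliques for $r\le d$, which is $\mathcal{O}(m^{d/2})$ for $d\ge2$ and handled as in the $d=1$ amortization otherwise. So the $\omega^u$ cost stays $\mathcal{O}(wm^{d/2})$ amortized, and the intersection cost is unchanged, giving $\mathcal{O}((w+\sqrt m)m^{d/2})$ per occurrence.

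Multiplying by the occurrence counts from (iii) yields the two stated bounds. For the case $d=0$ with non-edge-sensitive $\omega^u$, I will redo the per-edge argument of the earlier proof. Edge $e$ is reinserted only through triangles containing it, now destroyed either by an edge or a vertex deletion, so it is still processed $\mathcal{O}(1+t_e)$ times and Lemma~\ref{lem:common_neighborhood_intersections} applies.

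The main obstacle I expect is the amortized accounting for the extra internal $\omega^u$ calls in \textsc{cond\_mod} when $d=1$. There the earlier proof relied on a global sum over edges rather than a per-call bound, and the root-level call adds one evaluation per call, which is fine. I would first try to verify that the number of nodes is at most a constant times the number of leaves of the unpruned tree plus one, which reduces the question to the existing argument.
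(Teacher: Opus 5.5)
Your proposal is correct and follows essentially the same route as the paper. Like the paper, you reuse the \(\mathcal{O}(n+m+w_s)\)-space analysis of Algorithm~\ref{alg:k,d,omega-truss} and the core analysis, charge the cross-queue insertions (edges of \(G[N_G(v)]\) on a vertex deletion, \(\{v_1,v_2\}\cup S\) on an edge deletion) to destroyed triangles, and treat \(L\) and \textsc{cond\_mod} as adding only verification or pruned-search work. You are in fact more careful than the paper at one point: its remark that \textsc{cond\_mod} ``only prunes branches earlier'' glosses over the extra \(\omega^u\) evaluations at the root and internal recursion nodes, and your count of recursion nodes by \(r\)-cliques, \(r\le d\), correctly absorbs them.

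There are two small caveats. First, the fallback you float at the end does not work as stated. Branches whose set \(B\) empties before depth \(d\) create nodes (and \(\omega^u\) calls in \textsc{cond\_mod}) without any depth-\(d\) leaf of the kind counted in the existing argument; your clique count already makes the fallback unnecessary. Second, your claim that every outer iteration pops something tacitly needs \(k\geq 1\). For \(k=0\), \textsc{truss} is never called, so \(Q_E\) never empties and the loop does not terminate when \(E\neq\emptyset\). This is a degenerate defect of the algorithm as written, and the paper's proof does not address it either.
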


\begin{proof}
    We use the \(\mathcal{O}(n+m+w_s)\)-space running-time bounds for Algorithm~\ref{alg:k,d,omega-truss} and the running-time bound for Algorithm~\ref{alg:k,omega-core}. The modified procedure \textsc{cond\_mod} only prunes branches earlier, so its cost is bounded by the cost of the original truss procedure. The dictionary \(L\) also does not change this bound: it only stores witnesses found during calls to \textsc{cond\_mod}. When such a witness is found, recording the witnessing clique can be charged to the recursive call branch that found it. Later, checking whether \(L[\overline{v_1v_2}]\) is still a valid witness is only the verification part of a full call to \textsc{cond\_mod}, since the search for a witness is skipped. Hence the overhead of \(L\) is dominated by the cost of \textsc{cond\_mod}.

    Next, consider the extra queue updates. When \textsc{core} removes a vertex \(v\), it inserts into \(Q_E\) all edges inside \(N_G(v)\), since these are exactly the edges whose common neighborhood contains \(v\), and therefore their condition may be affected by the removal of \(v\). Each such edge, together with \(v\), forms a triangle. Each triangle can be charged only once in this way, namely when its first vertex is removed. Thus the total number of these insertions is \(\mathcal{O}(C_3)=\mathcal{O}(m^{3/2})\).

    When \textsc{truss} removes an edge \(\overline{v_1v_2}\), it inserts \(v_1\), \(v_2\), and the vertices in \(N_G(v_1)\cap N_G(v_2)\) into \(Q_V\), since a vertex \(v\) can be affected by the removal of an edge only if one endpoint of the removed edge is \(v\), or if both endpoints of the removed edge belong to \(N_G(v)\). The endpoint insertions contribute \(\mathcal{O}(m)\) in total, while the common-neighbor insertions can again be charged to triangles, giving \(\mathcal{O}(C_3)=\mathcal{O}(m^{3/2})\) insertions. Therefore the additional work caused by the interaction between the two queues is dominated by the truss running time.

    Hence, if \(d>0\) or \(\omega^u\) is edge-sensitive, the running time remains
    \[
        \mathcal{O}\!\left(n+\left(w+\sqrt{m}\right)m^{\frac{d+4}{2}}\right).
    \]
    If \(d=0\) and \(\omega^u\) is not edge-sensitive, the second truss update is not used, and the same argument gives
    \[
        \mathcal{O}\!\left(n+\left(w+\sqrt{m}\right)m^{\frac{3}{2}}\right).
    \]
\end{proof}

\begin{proposition}
    \label{prop:(k,d,omega)_truss_core}
    Let \(G=(V,E)\) be a graph, let \(\omega^u\) be an MCP upper-bound function, let \(K\subseteq V\) be a set of size \(k\), let \(0\leq d\leq k-2\), and let \(G'=\textsc{truss\_core}(G,k-1,d,\omega^u)\). Then:
    \[
        K \text{ is a clique in } G \Longleftrightarrow K \text{ is a clique in } G'.
    \]
\end{proposition}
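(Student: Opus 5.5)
The argument follows the ``first deletion'' pattern of Propositions~\ref{prop:k,omega-core} and~\ref{prop:(k,d,omega)_truss}, now applied to the interleaved process of Algorithm~\ref{alg:truss-core}. The direction \((\Leftarrow)\) is immediate, since \(G'\subseteq G\) (the algorithm only deletes vertices and edges). For \((\Rightarrow)\), suppose \(K\) is a clique in \(G\) but not in \(G'\). Every deletion is either a vertex deletion in \textsc{core} or an edge deletion in \textsc{truss}. So some deletion destroys \(K\) for the first time: either a vertex of \(K\) is removed, or an edge of \(K\) is removed while both its endpoints are still present. Let \(H\) be the current graph immediately before this first destructive operation. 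By minimality, \(K\) is still a clique in \(H\). We derive a contradiction in each case. Note that \textsc{truss\_core} is called with parameter \(k-1\), so \textsc{core} runs with threshold \(k-1\) and \textsc{truss} with threshold \(k-2\).

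\emph{Vertex case.} Suppose \(v\in K\) is removed by \textsc{core} in \(H\). Then \(K\setminus\{v\}\) is a clique of size \(k-1\) in \(H[N_H(v)]\). Hence \(\omega^u(N_H(v))\ge k-1\), contradicting the removal test \(\omega^u(N_H(v))<k-1\). This mirrors Proposition~\ref{prop:k,omega-core}; no monotonicity of \(\omega^u\) is needed.

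\emph{Edge case.} Suppose \(\overline{uv}\) with \(u,v\in K\) is removed by \textsc{truss}. Removal requires two things: \(L[\overline{uv}]\) is empty or invalid, and \(\textsc{cond\_mod}(u,v,H)\) returns false. So it suffices to show that \(\textsc{cond\_mod}(u,v,H)\) returns true, using threshold \(k-2\) (so the test reads \(\omega^u(H[W])<k-2-d\)).

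We exhibit one recursive branch that is never pruned and reaches \(|R|=d\). Let \(w_1<\dots<w_d\) be the \(d\) smallest-labelled vertices of \(K\setminus\{u,v\}\); this is possible because \(d\le k-2\). As in the correctness proof of Section~\ref{sec:truss-correctness-complexity}, the branch choosing \(w_1,\dots,w_j\) in order exists: each \(w_{j+1}\) lies in \(B\cap N^+_H(w_j)\) because \(K\) is a clique and the labels increase. Along this branch, with \(R_j=\{w_1,\dots,w_j\}\), we have
\[
W=N_H(u)\cap N_H(v)\cap\bigcap_{x\in R_j}N_H(x)\supseteq K\setminus(\{u,v\}\cup R_j).
\]
The right-hand side is a clique of size \(k-2-j\ge k-2-d\). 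Since \(\omega^u\) is a valid upper-bound function, \(\omega^u(H[W])\ge\omega(H[W])\ge k-2-d\) at every node of the branch. Thus the pruning test \(\omega^u(H[W])<(k-2)-d\) never fires, and the branch reaches \(|R|=d\) and returns \textbf{true}. The side effect of writing witnesses into \(L\) does not affect the return value. Because the recursion returns the disjunction over children, \textsc{cond\_mod} returns true, which contradicts the removal of \(\overline{uv}\).

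Hence no destructive operation occurs, so \(K\subseteq V(G')\) and all edges of \(K\) remain, i.e.\ \(K\) is a clique in \(G'\). The main obstacle is this edge case. The pruning in \textsc{cond\_mod} happens at every recursion level, not only at the leaves, so the argument has to check pruning along the entire chosen branch. It also depends on \(W\) shrinking by exactly one clique vertex per level, so that the \(k-2-j\ge k-2-d\) vertices of \(K\) remaining in \(W\) keep the bound valid at every depth.
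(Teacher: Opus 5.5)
Your proof is correct and takes essentially the same route as the paper. Vertex deletions in \textsc{core} are handled by the argument of Proposition~\ref{prop:k,omega-core}. For edge deletions in \textsc{truss}, you follow the branch of \textsc{cond\_mod} through vertices of \(K\) and note that the pruning test never fires, because the current \(W\) still contains the remaining vertices of \(K\). Your ``first destructive operation'' framing and your level-by-level count \(k-2-j\geq k-2-d\) make precise two points the paper treats informally: the interleaving of \textsc{core} and \textsc{truss} steps, for which the paper simply cites Proposition~\ref{prop:k,omega-core}, and the fact that pruning is checked at every recursion level.
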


\begin{proof}
    The implication \((\Leftarrow)\) follows directly from \(G'\subseteq G\). For \((\Rightarrow)\), let \(K\) be a clique of size \(k\) in \(G\). We show that no step of \textsc{truss\_core} removes a vertex or edge of \(K\).

    The \textsc{core} steps preserve \(K\) by Proposition~\ref{prop:k,omega-core}. It remains to consider a \textsc{truss} step. Suppose that an edge \(\overline{uv}\in K\) is tested. Since \(0\leq d\leq k-2\), choose \(d\) vertices \(w_1,\ldots,w_d\) from \(K\setminus\{u,v\}\). The set \(K'=\{u,v,w_1,\ldots,w_d\}\) is a \((d+2)\)-clique, and the remaining vertices of \(K\) lie in \(W_G(K')\). Hence this branch can be extended to a clique of size \(k\).

    If the modified condition used in Algorithm~\ref{alg:truss-core} prunes a branch early, then it does so because the current candidate subgraph has an upper-bound value that is too small to complete a clique of size \(k\). Since \(\omega^u\) is an MCP upper-bound function, such a subgraph cannot contain the missing vertices of \(K\). Therefore the branch corresponding to the vertices of \(K\) is never pruned. Consequently, \textsc{cond\_mod}\((u,v,G)\)returns true for every edge \(\overline{uv}\in K\), so no \textsc{truss} step removes an edge of \(K\). Since the \textsc{core} steps also preserve all vertices of \(K\), the set \(K\) remains a clique in \(G'\).
\end{proof}

\subsection{Structions}\label{sec:structions}
So far, the only reduction method introduced in this paper are truss and core related. We now introduce another reduction, called a \emph{struction}. Structions were originally defined for the Stable Set Problem~\cite{Ebenegger1984PseudoBoolean, alexe2003StructionsRevisited}, but they can also be applied to the Maximum Clique Problem~\cite{math10050697}. Using structions serves two purposes: it shows how the framework for improving upper-bound values defined later can use multiple reduction methods, and it illustrates how combining different reductions can lead to stronger upper-bound value improvements.

A struction transforms a graph \(G\) into a graph \(G'\) such that
\[\omega(G')=\omega(G)-1.\]
Unlike the truss and core reductions, a struction does not necessarily reduce the size of the graph. In fact, the transformed graph often has more edges/vertices than the original graph.

The transformation depends on a chosen pivot vertex \(v\in V\). Different choices of the pivot may produce different transformed graphs, which we denote by \(G_{v}\). In our implementation, we choose a pivot that minimizes the number of edges in the transformed graph, that is,
\[
    v_p \in \argmin_{v\in V}|E(G_v)|.
\]
We define \(\textsc{struction}(G) := G_{v_p}\) obtained by applying the struction transformation with such a pivot \(v_p\).
This is all we need to know about structions for the purposes of this paper; for a full description of the construction, we refer to~\cite{math10050697}.

For our practical purposes, we use the repeated version, \(\textsc{reapply\_struction}(G,\textit{edge\_limit})\), defined in Algorithm~\ref{alg:reapply_struction}. If \(\textit{edge\_limit}\) is provided, structions are applied as long as the transformed graph does not exceed this edge limit. Thus, it may happen that no transformation is accepted. If \(\textit{edge\_limit}\) is not provided, we first apply one struction, set \(\textit{edge\_limit}=|E(G)|\), and then continue with the same rule. Thus, at least one transformation is applied. The algorithm returns the final transformed graph \(G'\) and the number \(h\) of accepted structions, so that \(\omega(G')=\omega(G)-h\).

\begin{algorithm}[H]
    \small
    \SetAlgoLined
    \DontPrintSemicolon
    \SetKwProg{Proc}{ }{}{ }

    \caption{Repeated application of \textsc{struction}}
    \label{alg:reapply_struction}

    \KwIn{A graph \(G\), and an edge limit \(\textit{edge\_limit}\) (default \(0\)).}
    \KwOut{A pair \((H, h)\).}

    \Proc{\textsc{reapply\_struction}\((G, \textit{edge\_limit} \gets 0)\)}{
        \lIf{$|E(G)| = 0$}{\Return $(G,0)$}
        \(H \gets G\)\;
        \(h \gets 0\)\;

        \If{\(\textit{edge\_limit} = 0\)}{
            \(\textit{edge\_limit} \gets |E(G)|\)\;
            \(H \gets \textsc{struction}(G)\)\;
            \(h \gets h + 1\)\;
        }

        \(G' \gets H\)\;
        \While{\(0 <|E(G')| \leq \textit{edge\_limit}\)}{
            \(G' \gets \textsc{struction}(G')\)\;
            \If{\(|E(G')| \leq \textit{edge\_limit}\)}{
                \(H \gets G'\)\;
                \(h \gets h + 1\)\;
            }
        }
        \Return{\((H, h)\)}\;
    }

\end{algorithm}

Note that \textsc{reapply\_struction} cannot return the empty graph if the input graph is not empty. Indeed, a struction is accepted only if the transformed graph still has at least one edge. Since each accepted struction decreases the clique number by one, the process can at worst end with a graph whose remaining vertices are isolated. Such a graph has clique number \(1\); the previous graph had clique number \(2\), and therefore contained at least one edge.

\section{Improving MCP upper-bound values using reductions}
\label{sec:improving_bounds}
In this section, we describe how graph reductions can be used to iteratively improve upper-bound values for MCP. We first introduce an abstract notion of a reduction that is suitable for this purpose, and then show how such reductions can be composed and used inside a general upper-bound value improvement algorithm.

\subsection{General upper-bound value improvement algorithm for MCP}
The basic idea behind the algorithm can already be seen from the \(k\)-core reduction. As observed in~\cite{Walteros2020CliqeEasy}, if a graph has an empty \(k\)-core, then it cannot contain a clique of size \(k+1\). Indeed, every vertex in a clique of size \(k+1\) has at least \(k\) neighbors inside the clique, and therefore such a clique would survive in the \(k\)-core. If the \(k\)-core is empty, then the trivial upper-bound value for the reduced graph is \(0\), which is smaller than \(k+1\). Hence we can conclude that \(\omega(G)<k+1\).

The same reasoning does not depend on the trivial upper-bound function. After computing the \(k\)-core, we may apply any MCP upper-bound function \(\omega^u\) to the reduced graph. If this gives \(\omega^u(k\text{-core}(G))<k+1\), then the \(k\)-core contains no clique of size \(k+1\). Since every clique of size \(k+1\) in \(G\) is preserved by the \(k\)-core reduction, it follows that \(G\) itself contains no clique of size \(k+1\).

Finally, the same principle is not specific to the \(k\)-core. We can use any reduction that preserves the clique sizes relevant to the current value of \(k\), and then apply any MCP upper-bound function to the reduced graph. If the resulting upper-bound value is below the required threshold, then the original graph cannot contain a clique of size \(k\). The following definition captures exactly this implication.

\begin{definition}
    \label{def:k-bound-reduction}
    For \(k \in \mathbb{N}_0\), the function \(R_k:\mathcal{G}\rightarrow\mathcal{G}\) is a \(k\)-bound-reduction if there exists \(r_k:\mathcal{G}\rightarrow\mathbb{N}_0\) such that, for every \(G\in\mathcal{G}\),
    \[
        \omega(R_k(G)) < r_k(G) \implies \omega(G) < k.
    \]
\end{definition}

The function \(r_k\) specifies the threshold that has to be checked after applying the reduction. In the simplest case, the reduction preserves cliques of size \(k\), and then we can take \(r_k(G)=k\). However, some reductions change the clique number in a controlled way. For example, structions reduce the clique number by the number of applied transformations, so the threshold for the reduced graph must be shifted accordingly.
The next proposition shows that both types of reductions fit into Definition~\ref{def:k-bound-reduction}.

\begin{proposition}
    \label{prop:tc_rs_k-bound-reductions}
    Let \(k\in\mathbb{N}_0\), let \(d\in\mathbb{N}_0\) with \(d\leq k-2\), let \(\omega^u\) be an MCP upper-bound function, and let \(a\in\mathbb{N}_0\). Define:
    \begin{itemize}
        \item \(R^{d,\omega^u}_k(G) := \textsc{truss\_core}(G,k-1,d,\omega^u)\),
        \item \(R^a_k(G) := H\), where \((H,h) \gets \textsc{reapply\_struction}(G,a)\).
    \end{itemize}
    Then \(R^{d,\omega^u}_k\) and \(R^a_k\) are \(k\)-bound-reductions.
\end{proposition}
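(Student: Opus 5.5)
For each reduction we exhibit an explicit threshold function \(r_k\) and verify the implication in Definition~\ref{def:k-bound-reduction} by contraposition. We assume \(\omega(G)\geq k\) and show \(\omega(R_k(G))\geq r_k(G)\).

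\emph{Truss and core reduction.} Take \(r_k(G):=k\). Suppose \(\omega(G)\geq k\). If \(k=0\), the conclusion \(\omega(R_k(G))\geq 0\) is trivial. If \(k\geq 1\), pick a clique \(K\subseteq V(G)\) of size exactly \(k\); any clique of size \(\omega(G)\geq k\) contains one. The hypothesis \(d\leq k-2\) is exactly the hypothesis of Proposition~\ref{prop:(k,d,omega)_truss_core} with \(G'=\textsc{truss\_core}(G,k-1,d,\omega^u)\). That proposition then gives that \(K\) is a clique in \(G'=R^{d,\omega^u}_k(G)\), so \(\omega(R^{d,\omega^u}_k(G))\geq k=r_k(G)\). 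Since \(d\in\mathbb{N}_0\) and \(d\leq k-2\) force \(k\geq 2\), the degenerate cases do not in fact arise; we note this only for completeness.

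\emph{Repeated structions.} The reduced graph has clique number shifted by the number of accepted structions, so the threshold must depend on \(G\). Let \((H,h)\gets\textsc{reapply\_struction}(G,a)\) and set \(r_k(G):=\max\{k-h,0\}\in\mathbb{N}_0\). We need the identity \(\omega(H)=\omega(G)-h\). This follows by induction on the number of accepted structions in Algorithm~\ref{alg:reapply_struction}, using the stated property \(\omega(\textsc{struction}(G))=\omega(G)-1\). The point to check is that \(H\) and \(h\) are updated together: \(h\) is incremented exactly when \(H\) is replaced by a struction of the previous graph. The first forced struction in the branch \(\textit{edge\_limit}=0\) is also counted. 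A graph \(G'\) computed but rejected because it exceeds the edge limit is never assigned to \(H\), so the invariant \(\omega(H)=\omega(G)-h\) holds at return. The early return \((G,0)\) for edgeless \(G\) satisfies it trivially. Now, if \(\omega(G)\geq k\), then \(\omega(H)=\omega(G)-h\geq k-h\). Since \(\omega(H)\geq 0\), it follows that \(\omega(H)\geq r_k(G)\). Contraposition gives the required implication.

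The main obstacle is this bookkeeping for the struction loop. One must make precise that the returned \(h\) counts exactly the structions composed to obtain the returned \(H\), and not those attempted. Here we rely on the struction property cited from~\cite{math10050697} as a black box. Everything else is a direct application of Proposition~\ref{prop:(k,d,omega)_truss_core}.
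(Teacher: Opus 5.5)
Your proof is correct and follows the paper's argument: threshold \(r_k(G)=k\) plus Proposition~\ref{prop:(k,d,omega)_truss_core} for the truss and core reduction, and a threshold shifted by \(h\) together with \(\omega(H)=\omega(G)-h\) for the repeated structions. Two points go slightly beyond the paper. Your choice \(r_k(G)=\max\{k-h,0\}\) ensures that \(r_k\) takes values in \(\mathbb{N}_0\), as Definition~\ref{def:k-bound-reduction} requires, which the paper's \(k-h\) does not guarantee. Your explicit tracking of \(h\) through Algorithm~\ref{alg:reapply_struction} fills in a step the paper only asserts.
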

\begin{proof}
    For \(R^{d,\omega^u}_k\), define \(r^{d,\omega^u}_k(G):=k\). If \(\omega(R^{d,\omega^u}_k(G))<r^{d,\omega^u}_k(G)\), then \(\omega(R^{d,\omega^u}_k(G))<k\). By Proposition~\ref{prop:(k,d,omega)_truss_core}, this implies that \(G\) contains no clique of size \(k\), and therefore \(\omega(G)<k\).

    For \(R^a_k\), let \((H,h)\gets\textsc{reapply\_struction}(G,a)\), and define \(r^a_k(G):=k-h\). If \(\omega(R^a_k(G))<r^a_k(G)\), then \(\omega(H)<k-h\). Since \(\textsc{reapply\_struction}\) applies \(h\) structions, we have \(\omega(H)=\omega(G)-h\). Hence \(\omega(G)<k\).
\end{proof}

Once we have a set of \(k\)-bound-reductions, we can apply them sequentially and obtain another \(k\)-bound-reduction. This is formalized in the following proposition:

\begin{proposition}
    Let \(k\in\mathbb{N}_0\). Let \(R_k:\mathcal{G}\rightarrow\mathcal{G}\) be a \(k\)-bound-reduction with corresponding function \(r_k:\mathcal{G}\rightarrow\mathbb{N}_0\). For every \(q\in\mathbb{N}_0\), let \(R'_q:\mathcal{G}\rightarrow\mathcal{G}\) be a \(q\)-bound-reduction with corresponding function \(r'_q:\mathcal{G}\rightarrow\mathbb{N}_0\). Define
    \[
        R^c_k(G):=(R'_{r_k(G)}\circ R_k)(G)=R'_{r_k(G)}(R_k(G)).
    \]
    Then \(R^c_k\) is a \(k\)-bound-reduction.
\end{proposition}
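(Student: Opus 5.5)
We will exhibit an explicit threshold function for \(R^c_k\) and then verify the implication in Definition~\ref{def:k-bound-reduction} by chaining the two given implications. The natural candidate is
\[
    r^c_k(G) := r'_{r_k(G)}\bigl(R_k(G)\bigr),
\]
that is, the threshold produced by the second reduction, where the second reduction is instantiated with the index \(q=r_k(G)\) and applied to the intermediate graph \(R_k(G)\). This is a well-defined map \(\mathcal{G}\to\mathbb{N}_0\), because \(r_k(G)\in\mathbb{N}_0\) and, by hypothesis, for every \(q\in\mathbb{N}_0\) there is a \(q\)-bound-reduction \(R'_q\) with a corresponding function \(r'_q\).

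Fix \(G\in\mathcal{G}\), and write \(H:=R_k(G)\) and \(q:=r_k(G)\). Assume that \(\omega(R^c_k(G))<r^c_k(G)\), which is the statement \(\omega(R'_q(H))<r'_q(H)\). Since \(R'_q\) is a \(q\)-bound-reduction, applying its defining implication to the graph \(H\) gives \(\omega(H)<q\). Written out, this says \(\omega(R_k(G))<r_k(G)\). Since \(R_k\) is a \(k\)-bound-reduction, applying its defining implication to \(G\) then gives \(\omega(G)<k\). This is exactly the implication required of \(R^c_k\) with threshold \(r^c_k\).

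The only point that needs care is that the index \(q\) depends on \(G\). The hypothesis must therefore be used for every \(q\in\mathbb{N}_0\) simultaneously, and the implication for \(R'_q\) must be applied pointwise at the particular graph \(H\). This is legitimate because Definition~\ref{def:k-bound-reduction} quantifies over all graphs, so it holds in particular at \(H\). No other step should cause difficulty.
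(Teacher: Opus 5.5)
Your proof is correct and is essentially the paper's proof. You use the same threshold \(r^c_k(G):=r'_{r_k(G)}(R_k(G))\), and you chain the defining implication of \(R'_{r_k(G)}\), applied at the graph \(R_k(G)\), with the defining implication of \(R_k\), applied at \(G\), to conclude \(\omega(G)<k\).
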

\begin{proof}
    Define \(r^c_k:\mathcal{G}\rightarrow\mathbb{N}_0\) by
    \[
        r^c_k(G):=r'_{r_k(G)}(R_k(G)).
    \]
    Then
    \begin{equation*}
        \begin{split}
            \omega(R^c_k(G)) < r^c_k(G)
             & \Rightarrow \omega(R'_{r_k(G)}(R_k(G))) < r'_{r_k(G)}(R_k(G)) \\
             & \Rightarrow \omega(R_k(G)) < r_k(G)                           \\
             & \Rightarrow \omega(G) < k.
        \end{split}
    \end{equation*}
    The first implication follows from the definition of \(R^c_k\) and \(r^c_k\). The second and third implications follow from the definition of a \(k\)-bound-reduction. Hence \(R^c_k\) is a \(k\)-bound-reduction.
\end{proof}
Now we can define Algorithm~\ref{alg:mcp_upper_bound_improvement} for improving an upper-bound value.

\begin{algorithm}[H]
    \small
    \caption{MCP upper-bound value improvement algorithm}
    \label{alg:mcp_upper_bound_improvement}
    \SetAlgoLined
    \DontPrintSemicolon

    \KwIn{Graph \(G\)\;
        \quad\quad\quad\: Family \((\mathcal R_k)_{k\in\mathbb N}\), where each \(\mathcal R_k\) is a set of pairs \((R_k,r_k)\) such that \(R_k\) is a\;
        \quad\quad\quad\:\quad\quad\quad\: \(k\)-bound-reduction with threshold function \(r_k\)\;
        \quad\quad\quad\: MCP upper-bound function \(\omega^u:\mathcal{G}\rightarrow\mathbb{N}_0\)\;}
    \BlankLine
    \(k \gets \omega^u(G)\)\;
    \While{$\neg${\textsc{termination\_criteria}}}{
        Choose a pair \((R_k,r_k)\in\mathcal R_k\)\;
        \If{\(\omega^u(R_k(G)) < r_k(G)\)}{
            \(k \gets k-1\)\;
        }
    }
    \Return \(k\)\;
\end{algorithm}
The algorithm initializes \(k\) with \(\omega^u(G)\), which is an integer by Definition~\ref{def:upper_bound}.

The loop then tries to decrease the current upper-bound value \(k\). In each iteration, for the current value of \(k\), we choose a pair \((R_k,r_k)\in\mathcal R_k\). Thus \(R_k\) is a \(k\)-bound-reduction and \(r_k\) is its corresponding threshold function. The choice of \((R_k,r_k)\) is left open: in practice, one usually starts with cheaper reductions and applies more expensive reductions only if necessary. The stopping rule \textsc{termination\_criteria} is implementation-dependent. It can be based, for example, on a time limit, a maximum number of reductions, a target upper-bound value, or any other criterion suitable for the given application.

After applying the reduction, we compute the upper-bound value \(\omega^u(R_k(G))\). By Definitions~\ref{def:k-bound-reduction} and~\ref{def:upper_bound}, we have
\begin{equation*}
    \begin{split}
        \omega^u(R_k(G)) < r_k(G)
         & \Rightarrow \omega(R_k(G)) < r_k(G) \\
         & \Rightarrow \omega(G) < k.
    \end{split}
\end{equation*}
Therefore, if \(\omega^u(R_k(G)) < r_k(G)\), then the current value \(k\) is not attainable as a clique size in \(G\). Since clique numbers are integers, this implies \(\omega(G)\leq k-1\), and the algorithm can update \(k\gets k-1\). If the condition fails, the current upper-bound value is not improved in that iteration, and the algorithm tries again with another selected reduction.

After termination, the algorithm returns \(k\) as the improved upper-bound value for \(\omega(G)\).

\begin{remark}
    \label{rem:k-not-certified-upper-bound}
    The value \(k\) does not have to be a certified strict upper-bound value for this argument to work. We may also test a value of \(k\) for which it is not known whether \(\omega(G)<k\). If the condition \(\omega^u(R_k(G))<r_k(G)\) holds, then by Definition~\ref{def:k-bound-reduction} we know that \(\omega(G)<k\). In other words, if \(\omega(G)\geq k\), the algorithm cannot falsely certify an improvement; in that case, the certification condition will fail.
\end{remark}

\subsection{Instantiation with truss and core}
Our first experimental variant uses only the \textsc{truss\_core} reduction, which is given in Algorithm~\ref{alg:truss_core_realization}.

\begin{algorithm}[H]
    \small
    \SetAlgoLined
    \DontPrintSemicolon

    \caption{Truss and core based tightening of an upper-bound value for \(\omega(G)\)}
    \label{alg:truss_core_realization}

    \KwIn{A graph \(G\).}
    \KwOut{A tightened upper-bound value for \(\omega(G)\).}

    \(k \gets \omega^{u}(G)\)\;

    \(d \gets 0\)\;
    \(H\gets G\)\;
    \While{$\neg$\textsc{termination\_criteria}}{
        \lIf{\(d+2 > k\)}{\Return \(k\)}
        \(H \gets\) \textsc{truss\_core}\((H, k-1, d, \omega^u)\)\;
        \eIf{\(\omega^{u}(H) < k\)}{
            \(k \gets k-1\)\;
            \(H \gets G\)
        }
        {
            \(d\gets d+1\)
        }
    }

    \Return{\(k\)}\;

\end{algorithm}
In each iteration, the algorithm applies \textsc{truss\_core} and checks whether the upper-bound value can be improved. If the value is improved, then \(k\) is decreased by one, \(H\) is reset to be again \(G\) and the same value of \(d\) is kept. Otherwise, \(k\) remains unchanged and we increase \(d\gets d+1\).

Notice that the algorithm has an additional stopping rule, which we call the \emph{structural stopping condition}: it terminates when \(d+2>k\). The reason is that the last meaningful case is \(d+2=k\). In this case, a witnessing clique from Definition~\ref{def:C} already has size \(k\). Hence, if at least one edge remains after the reduction, then some remaining edge satisfies Definition~\ref{def:C}, and its witnessing clique is a clique of size \(k\). Therefore, the upper-bound test cannot certify \(\omega(G)<k\), and larger values of \(d\) cannot improve the upper-bound value further. If the reduction removes all edges, then only isolated vertices may remain. For \(k\geq 2\), these isolated vertices are also removed by the core step, so the reduced graph becomes empty, \(k\) is decreased, and the algorithm continues normally. For \(k=1\), the structural stopping condition is satisfied trivially, and the algorithm terminates.

In terms of the abstract framework in Algorithm~\ref{alg:mcp_upper_bound_improvement}, this variant uses reductions built from
\[
    \mathrm{TC}_{k,d}(H)
    :=
    \textsc{truss\_core}(H,k-1,d,\omega^u).
\]
For a chosen value of \(d\), the reduction is the composition of all \textsc{truss\_core} reductions from the current starting value \(d'\) up to \(d\), where \(0\leq d'\leq d\):
\[
    \mathrm{TC}_{k,d}
    \circ
    \mathrm{TC}_{k,d-1}
    \circ
    \cdots
    \circ
    \mathrm{TC}_{k,d'}.
\]
Thus, for each current clique-size threshold \(k\), the set \(\mathcal R_k\) can be understood as containing these composed reductions for all \(0\leq d'\leq d\leq k-2\). The value \(d'\) is the current truss parameter when the graph is reset; in particular, when \(k\) decreases, the algorithm resets the graph but continues with the same value of \(d\), rather than starting again from \(d=0\). The corresponding threshold function from Definition~\ref{def:k-bound-reduction} is \(r_k(G)=k\), since each \(\mathrm{TC}\) step preserves all cliques of size \(k\).

\subsection{Instantiation with structions, truss and core}
The second algorithm which we will use in our experiments is defined in Algorithm~\ref{alg:struction_truss_tightening}.

\begin{algorithm}[H]
    \small
    \SetAlgoLined
    \DontPrintSemicolon

    \caption{Struction, truss and core based tightening of an upper-bound value for \(\omega(G)\)}
    \label{alg:struction_truss_tightening}

    \KwIn{A graph \(G\).}
    \KwOut{A tightened upper-bound value for \(\omega(G)\).}

    \(((H_{\mathrm{init}}, h_{\mathrm{init}}), t_{\mathrm{rs\_init}}) \gets \textsc{reapply\_struction}(G;\ \textit{edge\_limit}=\lvert E(G)\rvert)\)\;

    \(H \gets H_{\mathrm{init}},\; h \gets h_{\mathrm{init}},\; k \gets \omega^u(G),\; d \gets 0,\; t_{\mathrm{rs}} \gets t_{\mathrm{rs\_init}},\; t_{\mathrm{tc}} \gets 0\)\;

    \While{$\neg$\textsc{termination\_criteria}}{
    \(\textit{improved} \gets \textbf{false}\)\;

    \While{\(true\)}{
        \If{\(\omega^{u}(H) < k - h\)}{
            \(H \gets H_{\mathrm{init}},\; h \gets h_{\mathrm{init}},\; k \gets k-1,\; d \gets 0,\; t_{\mathrm{rs}} \gets t_{\mathrm{rs\_init}},\; t_{\mathrm{tc}} \gets 0\)\;
            \(\textit{improved} \gets \textbf{true}\)\;
        }
        \lIf{\(t_{\mathrm{tc}} > t_{\mathrm{rs}}\) \textbf{or} \textit{improved}}{\textbf{break}}
        \lIf{\(d+2 > k-h\)}{\Return \(k\)}
        \(((H,\cdot), t_{\mathrm{tc}}) \gets \textsc{truss\_core}(H,k-h-1,d,\omega^u)\)\;
        \(d \gets d+1\)\;
    }

    \If{\(\textbf{not }\textit{improved}\)}{
    \(((H,h_{\mathrm{add}}), t_{\mathrm{rs}}) \gets \textsc{reapply\_struction}(H)\)\;
    \(h \gets h + h_{\mathrm{add}}\)\;
    }
    }

    \Return{\(k\)}\;

\end{algorithm}

First let us make note that for both \textsc{reapply\_struction} and \textsc{truss\_core} functions we added additional return value which tell us the time the algorithm was executing. First thing we do is call \textsc{reapply\_struction} with \(edge\_limit\) set to number of edges in original graph. What we get out will be our initial graph which we will use at every reset. Then we have two loops. The inner loop executes as long as \textsc{truss\_core} reduction is not slower then previous \textsc{reapply\_struction}. Once it gets slower, we exit the inner loop, call \textsc{reapply\_struction} again, measure its time, and again enter the inner loop.

The algorithm uses the same structural stopping condition as Algorithm~\ref{alg:truss_core_realization}, but adjusted for the clique-size decrease caused by the accepted structions: it terminates when \(d+2>k-h\). After \(h\) structions, a clique of size \(k\) in the original graph corresponds to a clique of size \(k-h\) in the current graph. Thus, at the last meaningful value \(d+2=k-h\), if at least one edge remains after the reduction, then some remaining edge has a witnessing clique of size \(k-h\). This clique can be lifted through the \(h\) structions to a clique of size \(k\) in the original graph, so we cannot certify \(\omega(G)<k\). If no edge remains, then only isolated vertices may remain. For \(k-h\geq 2\), these isolated vertices are removed by the core step, so the reduced graph becomes empty and the upper-bound value can still be improved. For \(k-h=1\), the structural stopping condition is satisfied trivially, and the algorithm terminates.

To connect this algorithm with the abstract framework from Algorithm~\ref{alg:mcp_upper_bound_improvement}, we can view it as choosing reductions from the family \((\mathcal R_k)_{k\in\mathbb N}\). Let
\[
    \mathrm{RS}_a(X)
    :=
    H,
    \qquad
    \text{where } (H,h)=\textsc{reapply\_struction}(X,a),
\]
and let
\[
    \mathrm{TC}_{k,d}(X)
    :=
    \textsc{truss\_core}(X,k-1,d,\omega^u).
\]
Here \(k\) denotes the current clique-size threshold. The reduction \(\mathrm{RS}_a\) changes this threshold from \(k\) to \(k-h\), while \(\mathrm{TC}_{k,d}\) preserves all cliques of size \(k\).

For each current clique-size threshold \(k\), the set \(\mathcal R_k\) used by Algorithm~\ref{alg:struction_truss_tightening} can be understood through composed reductions. The algorithm alternates between applications of \(\mathrm{RS}\) and sequences of \(\mathrm{TC}\) reductions with increasing values of \(d\). The first \(\mathrm{RS}\) step uses the initial edge limit \(a_1\), while all later \(\mathrm{RS}\) steps use parameter \(0\). Thus the composed reduction has the form
\[
    \cdots
    \circ \mathrm{RS}_{0}
    \circ
    \left(\mathrm{TC}_{k_2,D_2}\circ\cdots\circ\mathrm{TC}_{k_2,0}\right)
    \circ \mathrm{RS}_{0}
    \circ
    \left(\mathrm{TC}_{k_1,D_1}\circ\cdots\circ\mathrm{TC}_{k_1,0}\right)
    \circ \mathrm{RS}_{a_1}.
\]
Let \(h_j\) be the clique-size decrease returned by the \(j\)-th application of \(\mathrm{RS}\), counting the initial call \(\mathrm{RS}_{a_1}\) as \(j=1\). For each block \(i\), which follows the first \(i\) applications of \(\mathrm{RS}\), the current clique-size threshold is \(k_i = k-\sum_{j=1}^{i} h_j\). Moreover, \(D_i\leq k_i-2\) is the largest truss parameter reached before the inner stopping rule terminates the block. Thus, the threshold function defined for this composed reduction is \(r_k(G):=k_p\).
The \(\mathrm{TC}\) steps do not change this value, since they preserve all cliques of the current target size. The algorithm may terminate at any point in this composition. A concrete example is
\[
    \cdots
    \circ \mathrm{RS}_{0}
    \circ
    \left(
    \mathrm{TC}_{k_2,2}
    \circ \mathrm{TC}_{k_2,1}
    \circ \mathrm{TC}_{k_2,0}
    \right)
    \circ \mathrm{RS}_{0}
    \circ
    \left(
    \mathrm{TC}_{k_1,1}
    \circ \mathrm{TC}_{k_1,0}
    \right)
    \circ \mathrm{RS}_{a_1}.
\]

In this instantiation, structions and the combined truss and core reductions play complementary roles. Structions decrease the clique number and often reduce graph density, which can make later truss and core reductions faster and more effective. However, structions may also increase the number of vertices or edges. In contrast, truss and core reductions directly remove vertices and edges, and can therefore keep the graph growth caused by structions under control. Alternating the two methods therefore helps control the graph size and reduces the risk that either method gets stuck in a long computation.

\section{Computational results}
\label{sec:comp_results}
In this section, we evaluate the proposed framework for improving upper-bound values experimentally. The goal is to measure how much the reductions improve the values obtained from different MCP upper-bound functions, how quickly these improvements are obtained, and how the two algorithm instantiations compare in practice. We first describe the benchmark instances, implementation details, and tested upper-bound functions. We then report the results on the selected benchmark graphs and finally compare our strongest upper-bound values with previously reported values for several difficult DIMACS instances.

\subsection{Computations on benchmark graphs}

We tested several upper-bound functions with both algorithm instantiations. For brevity, we denote the first instantiation, defined in Algorithm~\ref{alg:truss_core_realization}, by \texttt{tc}, and the second instantiation, defined in Algorithm~\ref{alg:struction_truss_tightening}, by \texttt{tc+rs}. The tested upper-bound functions were:
\begin{itemize}
    \item \textbf{\texttt{trivial}:} We use the number of vertices as the upper-bound value.

    \item \textbf{\texttt{degree+density}:} We compute both the degree and density upper-bound values, and return the smaller value. This adds almost no extra cost, since both values can be computed in the same pass over the graph.

    \item \textbf{\texttt{dsatur}:} We color the graph with the DSatur heuristic, and use the number of colors as the upper-bound value.

    \item \textbf{\texttt{sdp}:} We use a hybrid SDP upper-bound function. If the graph density is larger than \(0.5\), we use the Lov\'asz theta upper-bound function. Otherwise, we use vector coloring. The Lov\'asz theta upper-bound value for MCP is computed on the complement graph. Therefore, when the original graph is dense, its complement is sparse and the corresponding SDP has fewer constraints, making it faster to solve. For sparse graphs, vector coloring is usually faster because its corresponding SDP has fewer constraints. All SDP computations were performed using MOSEK 11.1~\cite{mosek}.
\end{itemize}

Our computational setup was as follows. The implementation was written in C++. We used all graphs with fewer than \(45000\) edges from the benchmark collection of~\cite{marino2024shortreviewnovelapproaches}. This cutoff was chosen for computational feasibility: in preliminary tests, slightly larger instances from the same collection exhausted the available memory during the SDP computation. The resulting set contains \(73\) benchmark graphs, which was sufficient for a broad comparison across graph sizes and densities. The instances used here come from the classical DIMACS benchmark set~\cite{johnson1996cliques}, the EVIL benchmark family~\cite{szabo2019benchmark}, and Sloane's coding-theory challenge instances~\cite{sloane2000independentsets}. We divided them into two groups: small graphs with at most \(15000\) edges, whose results are shown in Table~\ref{tab:small}, and medium graphs with more than \(15000\) edges, whose results are shown in Table~\ref{tab:medium}. For each graph, we ran every combination of algorithm and upper-bound function with a time limit of \(25\) minutes.

\input{progress_tables.tex}

The left part of each table reports basic graph information, including the number of vertices, the number of edges, the density, and the clique number. All clique numbers reported here are proven optimal values taken from~\cite{marino2024shortreviewnovelapproaches}. The right part of each table reports the computational results. The column \(\omega^u\) gives the initial upper-bound value computed by the corresponding function. The column named after the upper-bound function reports the progress of the value over time, namely the best upper-bound value after \(1\) second, \(1\) minute, \(5\) minutes, and \(25\) minutes. The rightmost entry in each cell gives the running time. A computation can finish before the \(25\)-minute limit if the current upper-bound value reaches the known lower bound. When a computation finishes before the time limit, the reported value is written in bold.

For the instances considered here, these lower bounds are also proven optimal values, but the algorithm itself does not require this. If the known lower bound is not optimal, then the algorithm will not certify an upper-bound value below the true optimum; it will instead terminate either when \textsc{termination\_criteria} is met or when the structural stopping condition is reached. As explained for both instantiations, the structural stopping condition is reached when the reduction finds a clique matching the current value of \(k\). For \texttt{tc}, this condition is \(d+2>k\), while for \texttt{tc+rs} it is \(d+2>k-h\). When the structural stopping condition is satisfied, the algorithm has found a clique whose size matches the current upper bound; hence, it has certified the optimum value and can return \(k\).

All computations in Tables~\ref{tab:small} and~\ref{tab:medium} were performed on a single machine with a 13th Gen Intel(R) Core(TM) i5-13420H processor. The machine has 8 physical cores, 12 logical threads, and 31~GiB of RAM. The maximum CPU frequency reported by the system is approximately 4.60~GHz. All computations were run single-threaded, including the MOSEK computations used for the SDP-based upper-bound functions. This was done to make the running times comparable across all tested methods.

Another observation from the tables is that \texttt{tc} compares favorably with the Sequential Elimination Algorithm (SEA) from~\cite{GENDRON2008Sequential}. On the benchmark instances that overlap with the SEA experiments, \texttt{tc} with the trivial upper-bound function gives strictly better values in all cases. With the DSatur upper-bound function, \texttt{tc} gives either strictly better values or the same values; in the equal cases, both methods already attain the optimal value.

This comparison is consistent with the relationship between SEA and the core-only version of our framework. More precisely, if \(\omega^u\) is increasing, the core-only version of our framework returns an upper-bound value no larger than that returned by SEA. By the core-only version, we mean Algorithm~\ref{alg:truss_core_realization} without the \textsc{truss} step, terminated when the \textsc{core} reduction no longer improves the bound. We omit the formal proof, since it would require additional notation and the result is not used elsewhere in the paper. The full \texttt{tc} variant additionally applies truss reductions, which can produce stronger bounds, as observed in our experiments.

To obtain a clearer comparison between the methods, we also plot the results from Tables~\ref{tab:small} and~\ref{tab:medium}; see Figure~\ref{plot:progress}. The \(x\)-axis shows time, while the \(y\)-axis shows the percentage gap between the current upper-bound value and the optimal value. More precisely, if \(U_t\) is the upper-bound value at time \(t\), then the plotted gap is \(100\cdot (U_t-\omega(G))/\omega(G)\). The value at time \(0\) is the initial upper-bound value. After that, the values are shown in steps of \(5\)s. If the initial upper-bound value took more than \(5\)s to compute, we use the initial value also for the later time points until the computation finishes. We omit instances for which the initial upper-bound value was not computed within \(25\) minutes.
\input{progress_plots.tex}

The initial values for the \texttt{trivial} and \texttt{degree+density} upper-bound functions are not shown in the plots, since they would make the scale too large and hide the relevant behavior of the other methods. For this reason, we created Figure~\ref{plot:minute_progress}, which analyzes only the first minute for these two upper-bound functions. The figure has the same structure as Figure~\ref{plot:progress}, but the second row of the \(x\)-axis also reports the average value of the parameter \(d\) for the \texttt{tc} runs at each time point. The plot also includes \texttt{tc+rs}; however, we do not report an average \(d\) for this algorithm, because \texttt{tc+rs} resets \(d\) several times during its execution, making this statistic less informative.

Figures~\ref{plot:progress} and~\ref{plot:minute_progress} show that the \texttt{trivial} upper-bound function quickly outperforms the \texttt{degree+\linebreak[0]density} upper-bound function, although the latter gives a better initial upper-bound value. This is due to the lower cost of computing the \texttt{trivial} upper-bound function: it takes \(\mathcal{O}(n)\) time, while the \texttt{degree+density} upper-bound function takes \(\mathcal{O}(m)\) time. In this setting, the stronger initial upper-bound value is not always worth the additional computation time. The time for the \texttt{degree+density} upper-bound function could possibly be reduced by a specialized implementation that maintains the value inside the reduction algorithm. We do not do this here, since in this work we treat each computation of an upper-bound value as a separate task.
\input{first_minute_progress_plots.tex}

As expected, \(d\) increases faster for the \texttt{trivial} upper-bound function than for the \texttt{degree+\linebreak[0]density} upper-bound function, because the \texttt{trivial} upper-bound function is cheaper to compute. We also observe that most of the improvement happens in the first few seconds; after that, the progress slows down substantially.

Returning to Figure~\ref{plot:progress}, we observe that, for all tested upper-bound functions, \texttt{tc+rs} outperforms \texttt{tc} on average. This shows the benefit of combining different reduction methods. On the smaller graphs, the \texttt{trivial} and \texttt{degree+density} upper-bound functions combined with \texttt{tc+rs} come close to \texttt{dsatur} combined with \texttt{tc}. However, this behavior does not continue on the medium graphs. On the other hand, \texttt{dsatur} combined with \texttt{tc+rs} comes close to the performance of the \texttt{sdp} method.

This motivated a second comparison: we compare \texttt{dsatur} with \texttt{tc+rs} against the initial \texttt{sdp} upper-bound value. More precisely, we count how often \texttt{dsatur} with \texttt{tc+rs} reaches the initial \texttt{sdp} upper-bound value faster than the \texttt{sdp} computation itself. The results are shown in Figure~\ref{plot:heatmap}, where instances are grouped by graph density. Below the heatmap, we also report the average speedup on instances where \texttt{dsatur} with \texttt{tc+rs} is faster, and the average slowdown on instances where the \texttt{sdp} computation is faster.

The results are promising. For all density buckets below \(0.7\), \texttt{dsatur} with \texttt{tc+rs} reaches the initial \texttt{sdp} upper-bound value faster in most cases, with the largest speedups appearing on lower-density graphs. In the density bucket \([0.7,0.8)\), it still wins in a majority of cases, but the slowdowns in that range are much larger than the speedups. For graphs with density above \(0.8\), the \texttt{sdp} upper-bound function performs better, and the slowdowns are even larger. This is expected: the \texttt{tc+rs} reduction is less effective on very dense graphs, while the Lov\'asz theta computation becomes easier because it is applied to the complement graph, which becomes sparse when the original graph is dense.
\input{heatmap_comparison.tex}

\subsection{Improving state-of-the-art upper-bound values on large instances}

We now apply our method to improve existing upper-bound values for open DIMACS instances~\cite{johnson1996cliques}, focusing on the graphs \texttt{C500.9}, \texttt{C1000.9}, and \texttt{C2000.9}.

The plain Lovász theta values reported in~\cite{gong2025roundingtheta} are
\[
    \vartheta(\overline{\texttt{C500.9}}) \approx 84.20,\qquad
    \vartheta(\overline{\texttt{C1000.9}}) \approx 123.49,\qquad
    \vartheta(\overline{\texttt{C2000.9}}) \approx 178.93.
\]
These values were obtained by solving the standard Lovász theta SDP relaxation
on the complemented DIMACS instances.

Stronger upper-bound values were reported in~\cite{battista2021inequality}, who considered a strengthened theta formulation with additional inequality constraints. Their method solves a large-scale SDP relaxation and then applies a post-processing procedure to recover a valid dual upper-bound value. For the three instances considered here, they report the certified upper-bound values
\[
    \vartheta_{+}(\overline{\texttt{C500.9}}) \approx 83.58,\qquad
    \vartheta_{+}(\overline{\texttt{C1000.9}}) \approx 122.60,\qquad
    \vartheta_{+}(\overline{\texttt{C2000.9}}) \approx 177.73.
\]
Here, \(\vartheta_{+}\) denotes the nonnegativity-strengthened theta relaxation used in~\cite{battista2021inequality}. These values improve the integer upper-bound values obtained from the plain Lovász theta values.

For \texttt{C500.9}, an even slightly stronger floating-point value was later reported in~\cite{battista2024lovaszschrijver}. They apply a compact Lovász--Schrijver lift-and-project SDP relaxation, denoted \(\mu(G,C)\), where \(C\) is the constraint set used in their compact formulation, and report
\[
    \mu(\overline{\texttt{C500.9}},C) \approx 83.51.
\]
Since the clique number is integer, any certified floating upper-bound value \(U\) implies
\[
    \omega(G) \leq \lfloor U \rfloor.
\]
Applying this observation to the above values gives
\[
    \omega(\texttt{C500.9}) \leq 83,\qquad
    \omega(\texttt{C1000.9}) \leq 122,\qquad
    \omega(\texttt{C2000.9}) \leq 177.
\]
To the best of our knowledge, these are the strongest certified integer upper-bound values previously reported for these three DIMACS instances.

In this work, we obtained new improved upper-bound values by using the \textsc{truss\_core} reduction with the SDP upper-bound function as \(\omega^u\). Since the three graphs considered here are dense, this means that the Lov\'asz theta upper-bound function was used. The obtained values are
\[
    \omega(\texttt{C500.9}) \leq 73,\qquad
    \omega(\texttt{C1000.9}) \leq 115,\qquad
    \omega(\texttt{C2000.9}) \leq 168.
\]

For \texttt{C500.9}, we allowed both \textsc{core} and \textsc{truss} with \(d=0\) to run as long as they continued to improve the value. The computation that improved the upper-bound value from \(83\) to \(73\) took \texttt{4d 9h 13m}. We also tried to improve the value to \(72\), but stopped the computation after three days, since the \(d=0\) truss step no longer removed a substantial number of edges and moving to \(d=1\) was too expensive.

For \texttt{C1000.9}, the Lov\'asz theta computations were even more expensive. Therefore, we only used the \textsc{core} part of the reduction. The computation that improved the upper-bound value from \(122\) to \(115\) took \texttt{4d 14h 45m}. We also tried to improve the value to \(114\), but stopped the computation after three days, since \textsc{core} no longer removed enough vertices to make a further improvement likely, while applying \textsc{truss} was too expensive.

The computations for \texttt{C500.9} and \texttt{C1000.9} were performed on a single compute node with two AMD EPYC 7402 24-Core processors. The node has 48 physical cores, 96 logical threads, and 125~GiB of RAM. The maximum CPU frequency reported by the system is approximately 3.35~GHz. The MOSEK computations were run as single-process shared-memory jobs, using 24 threads for \texttt{C500.9} and 48 threads for \texttt{C1000.9}. The number of threads was chosen empirically. We tested different thread counts for the Lov\'asz theta computations on \texttt{C500.9} and \texttt{C1000.9}, and used the values that gave the best trade-off before additional threads started to give diminishing returns.

For \texttt{C2000.9}, we used a more targeted approach. Computing all intermediate upper-bound values before reaching \(169\) would require too much time and memory. Instead, as explained in Remark~\ref{rem:k-not-certified-upper-bound}, we directly tested the candidate value \(k=169\). Once the reduction certified \(\omega(G)<169\), we obtained the valid upper-bound value \(\omega(G)\leq 168\).

The value \(k=169\) was chosen empirically. In our implementation, \textsc{core} processes vertices in degeneracy order. On \texttt{C500.9} and \texttt{C1000.9}, we observed that when this ordering allowed \textsc{core} to remove several dozen vertices without stalling, the process usually continued until the desired certificate was obtained. We therefore tested several candidate values for \texttt{C2000.9} and found that \(k=169\) was the smallest value for which this behavior occurred consistently, and then applied only \textsc{core} for this value of \(k\). The computation that returned the upper-bound value \(168\) took \texttt{17d 18h 8m}.

Our choice of the degeneracy order~\cite{Matula1983SmallestLast} is motivated by the standard \(k\)-core computation, where this order reduces the number of queue reinsertions to zero: vertices can be processed and removed without being reinserted into the queue. In our generalized core reduction, the removal test is based on an upper-bound function instead of only on the neighborhood size, so this zero-reinsertion property no longer follows. Nevertheless, the degeneracy order still tends to process vertices with small remaining neighborhoods early, and we therefore expect it to be more effective than an arbitrary ordering.

The computation for \texttt{C2000.9} was performed on a single compute node with two AMD EPYC 9455 48-Core processors. The node has 96 physical cores, 192 logical threads, and 751~GiB of RAM. The maximum CPU frequency reported by the system is approximately \(4.41\) GHz. The MOSEK computation was run as a single-process shared-memory job using 192 threads, which corresponds to all available logical threads on the node. This thread count was chosen empirically on \texttt{C2000.9}, where it gave the fastest computations among the tested values.

\section{Conclusions and Future work}\label{sec:conclusions}

\subsection{Conclusions}
In this paper, we studied the interaction between reduction rules and upper-bound functions for MCP. We first showed how MCP upper-bound functions can be used to strengthen classical core and truss reductions, leading to the \((k,\omega^u)\)-core and the \((k,\omega^u)\)-truss. We then further generalized the truss reduction by introducing the \((k,d,\omega^u)\)-truss, where the parameter \(d\) allows one to trade additional computational time for stronger reductions. We proved the relevant clique-preservation properties, correctness results, and running-time bounds. We then introduced a general framework for improving MCP upper-bound values by applying reductions before evaluating an upper-bound function, and provided two concrete instantiations: one using truss and core reductions, and one combining truss and core reductions with structions.

The computational results show that the proposed reductions can substantially improve several standard upper-bound functions. They also show that combining different reductions can be beneficial in practice. On sparse and medium-density benchmark graphs, the instantiation combining truss and core reductions with structions, using the coloring-based upper-bound function, often reached SDP-level bounds faster than computing the corresponding SDP bound directly. Finally, using the truss and core instantiation with the Lov\'asz theta upper-bound function, we improved the previously best known upper-bound values for three difficult DIMACS instances whose optimal clique numbers are not known.

\subsection{Future work}
The results of this paper suggest several natural extensions. We list the main directions for future work below.

\subsubsection*{Generalization of truss and core}
\begin{itemize}
    \item In this work, we used the \((k,\omega^u)\)-core and introduced the \((k,\omega^u)\)-truss and the \((k,d,\omega^u)\)-truss as upper-bound-based generalizations of the \(k\)-truss. We used these notions as reduction techniques for MCP. Since \(k\)-cores and generalized cores are also used in network analysis, including cohesion analysis, community detection, and large-scale graph processing~\cite{Seidman1983network,Batagelj2011GeneralizedCores,Kong2019kcoreTheories,Sariyuce2016kcore}, it would be interesting to study whether the \((k,\omega^u)\)-core can also be useful in such applications for suitable choices of \(\omega^u\). Similarly, since \(k\)-trusses are widely used as cohesive subgraph models, especially for finding dense communities in large graphs~\cite{Wang2012TrussDecomposition,huang2015closesttruss}, it would be interesting to study whether the two generalizations introduced here, namely the use of \(\omega^u\) and the additional parameter \(d\), can also be useful in such applications.

    \item Regarding the time complexity of the \((k,d,\omega^u)\)-truss algorithm, the case \(d=0\) with \(\omega^u\) not being edge-sensitive improves the running time by a factor of \(\mathcal{O}(\sqrt m)\), because the second, more expensive update of \(Q_E\) can be omitted. We believe that the assumption that \(\omega^u\) is not edge-sensitive is necessary for this improvement: if deleting an edge can change the value of \(\omega^u\), then \(\mathcal{O}(m^2)\) edge occurrences may be affected and the expensive update seems unavoidable. The role of the assumption \(d=0\) is less clear. For \(d>0\), after an edge is removed, the current algorithm has to inspect the edges inside its common neighborhood in order to find which witnessing cliques may have been affected. A possible improvement would be to maintain a data structure that stores witnessing cliques and avoids this search. This could improve the practical running time, but we were not able to prove a better asymptotic bound. A natural direction for future work is therefore to test whether such a data structure improves the practical running time, and then to understand its theoretical limits: either by proving an improved asymptotic bound, or by showing that the current bound cannot be improved in this way, or perhaps cannot be improved at all.

\end{itemize}

\subsubsection*{Upper-bound value improvement algorithm:}
\begin{itemize}
    \item We presented a general algorithm for improving MCP upper-bound values and gave two concrete instantiations. However, the framework is not limited to the reduction rules studied here. Many other reductions could be included, either alone or in combination with existing ones. Future work should therefore study which reduction rules complement each other well and how they can be implemented efficiently in parallel. Once an efficient implementation is developed, it should be evaluated in other state-of-the-art applications that rely on upper-bound values, such as branch-and-bound solvers.

          Another direction for future work is to improve the method that produced the new state-of-the-art upper-bound values for the three benchmark instances. In our implementation, parallelization was used only within the \texttt{sdp} computation. MOSEK supports parallel computation only within a single node and cannot distribute the computation across multiple nodes. Although a different solver could provide multi-node support, parallelizing only the computation of upper-bound values would probably still give diminishing returns. A more promising approach is therefore to parallelize the upper-bound value improvement procedure itself across multiple nodes. The running time could also be reduced by minimizing the number of expensive computations of upper-bound values, for example by warm-starting the solver or reusing relevant intermediate results.

    \item The upper-bound value improvement framework is not limited to MCP or to the reduction methods introduced above. Consider two maximization problems with instance sets \(\mathcal{P}\) and \(\mathcal{Q}\). Thus, \(I\in\mathcal{P}\) denotes an instance of the first problem. Let \(u_{\mathcal{Q}}:\mathcal{Q}\rightarrow\mathbb{R}\) be an upper-bound function for the second problem, and let \(v_{\mathcal{P}}^{\mathrm{opt}}:\mathcal{P}\rightarrow\mathbb{R}\) give the optimal value of an instance of the first problem. For each threshold \(k\), suppose that there exist a transformation \(T_k:\mathcal{P}\rightarrow\mathcal{Q}\) and a threshold function \(t_k:\mathcal{P}\rightarrow\mathbb{R}\) such that, for every \(I\in\mathcal{P}\),
          \[
              u_{\mathcal{Q}}(T_k(I))<t_k(I)
              \Rightarrow
              v_{\mathcal{P}}^{\mathrm{opt}}(I)<k.
          \]
          Then an upper-bound value for the transformed instance \(T_k(I)\) can certify that the optimal value of the original instance \(I\) is below \(k\).

          It would be interesting to investigate whether this idea can improve upper-bound values for other optimization problems and whether allowing more general transformations can produce stronger results.

    \item In our experiments, \texttt{tc+rs} with \texttt{dsatur} reached the same upper-bound value as the \texttt{sdp} computation faster on all graphs with density below \(0.7\). For graphs with density between \(0.7\) and \(0.8\), it was still faster in \(6/8\) cases. We limited the experiments to medium-sized graphs with fewer than \(45000\) edges. The reason is that the single-threaded computation of the \texttt{sdp} upper-bound function was already time-consuming on these instances, and it would become practically infeasible on much larger graphs. If the improvement algorithm is parallelized in future work, it would be useful to test whether this behavior also holds for larger graphs when compared with a multi-threaded SDP solver. If so, this method could provide a practical substitute for SDP-based upper-bound functions, which are expensive and sometimes practically infeasible to compute on large instances.

    \item In this work, the function \(\omega^u\) used inside the reductions is always one of the underlying upper-bound functions, such as coloring-based or SDP-based bounds. A possible direction for future work is to define \(\omega^u\) itself as the output of an upper-bound improvement algorithm. This would create a feedback loop in which improved upper-bound values lead to stronger reductions, and these stronger reductions may then lead to further upper-bound improvements.
\end{itemize}

\section*{Author contributions}
Conceptualization, A.K.; methodology, A.K.; software, A.K.; validation, A.K.; formal analysis, A.K.; investigation, A.K.; visualization, A.K.; writing -- original draft, A.K.; writing -- review and editing, A.K. and J.P.; supervision, J.P.; resources, J.P.; funding acquisition, J.P.

\section*{Notes on contributors}
Aljaž Krpan is a master's student at the University of Ljubljana, Faculty of Mathematics and Physics, mentored by Prof. Janez Povh. His main research interests include algorithmic optimization for discrete and combinatorial problems, mathematical modeling of real-life problems, and efficient practical implementations of algorithms for large-scale and parallel computing environments.

Janez Povh is a full professor of applied mathematics at the University of Ljubljana, Slovenia, and  managing director of Rudolfovo -- Science and Technology Centre Novo mesto, Slovenia. His research focuses on developing state-of-the-art algorithms for hard computational problems, mainly in combinatorial optimization and data science. He is primarily interested in computing exact solutions, that is, global optima, but also adapts his methods to obtain high-quality local optima. He analyzes algorithms theoretically and implements them as high-performance code designed to use modern supercomputers and quantum computers.

\section*{Funding}
This research  was co-funded by (1) the Slovenian Research and Innovation Agency ARIS through the research project Quantum Solver for Hard Binary Quadratic Problems (Grant No. J7-50186), and through the annual work programme of Rudolfovo, and by (2)
QEC4QEA project (Grant Agreement No. 101194322), funded by the European High Performance Computing Joint Undertaking (EuroHPC JU) and participating countries, including Slovenia, under the Horizon Europe research and innovation programme.

\section*{Disclosure statement}
The authors declare that they have no competing interests.

\section*{Data availability}
All code used for the computational experiments is available on the arXiv page of this paper, on GitHub {\footnotesize \url{https://github.com/Rudolfovoorg/Improving_Upper_Bounds_of_MCP_using_Reduction_Rules.git}} and on Zenodo {\footnotesize \url{https://doi.org/10.5281/zenodo.21336337}}.

\afterpage{\clearpage}

\bibliographystyle{plainnat}
\bibliography{References}

\end{document}

%% file: progress_tables.tex
\begin{table}[H]
    \centering
    {
        \fontsize{9pt}{9pt}\selectfont
        \setlength{\arrayrulewidth}{0.3pt}

        \renewcommand{\arraystretch}{0.8}
        \setlength{\extrarowheight}{2pt}

        \begin{adjustbox}{max width=\textwidth}
            \setlength{\dashlinedash}{0.6pt}
            \setlength{\dashlinegap}{1.5pt}
            \setlength{\arrayrulewidth}{0.4pt}
            \begin{tabular}{l l l : l | c: l r |c: l r: |c: l r: |c: l r}
                \hline
                Graph & & & Algorithm & \(\omega^u\) & \multicolumn{2}{:c|}{\texttt{trivial}} & \(\omega^u\) & \multicolumn{2}{:c:|}{\texttt{degree+density}} & \(\omega^u\) & \multicolumn{2}{:c:|}{\texttt{dsatur}} & \(\omega^u\) & \multicolumn{2}{:c}{\texttt{sdp}} \\ \hline
                1dc.64 & \(n:64\) & \(m:1473\) & \texttt{tc} & 64 & \textbf{10} & \textless{}1s & 54 & \textbf{10} & \textless{}1s & 12 & \textbf{10} & \textless{}1s & \textbf{10} & \textbf{10} & \textless{}1s \\
                          & \(\rho:0.73\) & \(\omega:10\) & \texttt{tc+rs} &  & \textbf{10} & \textless{}1s &  & \textbf{10} & \textless{}1s &  & \textbf{10} & \textless{}1s &  & \textbf{10} & \textless{}1s \\ \hline
                1dc.128 & \(n:128\) & \(m:6657\) & \texttt{tc} & 128 & 63 \(\Arrow{.15cm}\) 37 \(\Arrow{.15cm}\) 33 \(\Arrow{.15cm}\) 30 & 25m & 115 & 62 \(\Arrow{.15cm}\) 43 \(\Arrow{.15cm}\) 38 \(\Arrow{.15cm}\) 33 & 25m & 21 & 17 \(\Arrow{.15cm}\) \textbf{16} & 2s & \textbf{16} & \textbf{16} & \textless{}1s \\
                          & \(\rho:0.82\) & \(\omega:16\) & \texttt{tc+rs} &  & 42 \(\Arrow{.15cm}\) \textbf{16} & 14s &  & 39 \(\Arrow{.15cm}\) \textbf{16} & 20s &  & \textbf{16} & \textless{}1s &  & \textbf{16} & \textless{}1s \\ \hline
                1et.64 & \(n:64\) & \(m:1752\) & \texttt{tc} & 64 & 29 \(\Arrow{.15cm}\) 19 \(\Arrow{.15cm}\) \textbf{18} & 1m15s & 59 & 29 \(\Arrow{.15cm}\) 21 \(\Arrow{.15cm}\) \textbf{18} & 3m14s & 20 & \textbf{18} & \textless{}1s & \textbf{18} & \textbf{18} & \textless{}1s \\
                          & \(\rho:0.87\) & \(\omega:18\) & \texttt{tc+rs} &  & \textbf{18} & \textless{}1s &  & \textbf{18} & \textless{}1s &  & \textbf{18} & \textless{}1s &  & \textbf{18} & \textless{}1s \\ \hline
                1et.128 & \(n:128\) & \(m:7456\) & \texttt{tc} & 128 & 91 \(\Arrow{.15cm}\) 80 \(\Arrow{.15cm}\) 76 \(\Arrow{.15cm}\) 74 & 25m & 122 & 88 \(\Arrow{.15cm}\) 79 \(\Arrow{.15cm}\) 78 \(\Arrow{.15cm}\) 74 & 25m & 32 & 31 \(\Arrow{.15cm}\) 30 \(\Arrow{.15cm}\) 30 \(\Arrow{.15cm}\) 30 & 25m & 29 & 29 \(\Arrow{.15cm}\) \textbf{28} & 20s \\
                          & \(\rho:0.92\) & \(\omega:28\) & \texttt{tc+rs} &  & \textbf{28} & \textless{}1s &  & \textbf{28} & \textless{}1s &  & \textbf{28} & \textless{}1s &  & \textbf{28} & \textless{}1s \\ \hline
                1tc.8 & \(n:8\) & \(m:22\) & \texttt{tc} & 8 & \textbf{4} & \textless{}1s & 7 & \textbf{4} & \textless{}1s & \textbf{4} & \textbf{4} & \textless{}1s & \textbf{4} & \textbf{4} & \textless{}1s \\
                          & \(\rho:0.79\) & \(\omega:4\) & \texttt{tc+rs} &  & \textbf{4} & \textless{}1s &  & \textbf{4} & \textless{}1s &  & \textbf{4} & \textless{}1s &  & \textbf{4} & \textless{}1s \\ \hline
                1tc.16 & \(n:16\) & \(m:98\) & \texttt{tc} & 16 & \textbf{8} & \textless{}1s & 14 & \textbf{8} & \textless{}1s & \textbf{8} & \textbf{8} & \textless{}1s & \textbf{8} & \textbf{8} & \textless{}1s \\
                          & \(\rho:0.82\) & \(\omega:8\) & \texttt{tc+rs} &  & \textbf{8} & \textless{}1s &  & \textbf{8} & \textless{}1s &  & \textbf{8} & \textless{}1s &  & \textbf{8} & \textless{}1s \\ \hline
                1tc.32 & \(n:32\) & \(m:428\) & \texttt{tc} & 32 & \textbf{12} & \textless{}1s & 29 & \textbf{12} & \textless{}1s & \textbf{12} & \textbf{12} & \textless{}1s & \textbf{12} & \textbf{12} & \textless{}1s \\
                          & \(\rho:0.86\) & \(\omega:12\) & \texttt{tc+rs} &  & \textbf{12} & \textless{}1s &  & \textbf{12} & \textless{}1s &  & \textbf{12} & \textless{}1s &  & \textbf{12} & \textless{}1s \\ \hline
                1tc.64 & \(n:64\) & \(m:1824\) & \texttt{tc} & 64 & 35 \(\Arrow{.15cm}\) 29 \(\Arrow{.15cm}\) 25 \(\Arrow{.15cm}\) 21 & 25m & 60 & 37 \(\Arrow{.15cm}\) 30 \(\Arrow{.15cm}\) 27 \(\Arrow{.15cm}\) 24 & 25m & \textbf{20} & \textbf{20} & \textless{}1s & \textbf{20} & \textbf{20} & \textless{}1s \\
                          & \(\rho:0.90\) & \(\omega:20\) & \texttt{tc+rs} &  & \textbf{20} & \textless{}1s &  & \textbf{20} & \textless{}1s &  & \textbf{20} & \textless{}1s &  & \textbf{20} & \textless{}1s \\ \hline
                1tc.128 & \(n:128\) & \(m:7616\) & \texttt{tc} & 128 & 97 \(\Arrow{.15cm}\) 89 \(\Arrow{.15cm}\) 86 \(\Arrow{.15cm}\) 83 & 25m & 123 & 96 \(\Arrow{.15cm}\) 89 \(\Arrow{.15cm}\) 86 \(\Arrow{.15cm}\) 85 & 25m & \textbf{38} & \textbf{38} & \textless{}1s & \textbf{38} & \textbf{38} & \textless{}1s \\
                          & \(\rho:0.94\) & \(\omega:38\) & \texttt{tc+rs} &  & \textbf{38} & \textless{}1s &  & \textbf{38} & \textless{}1s &  & \textbf{38} & \textless{}1s &  & \textbf{38} & \textless{}1s \\ \hline
                1zc.128 & \(n:128\) & \(m:7008\) & \texttt{tc} & 128 & 83 \(\Arrow{.15cm}\) 55 \(\Arrow{.15cm}\) 51 \(\Arrow{.15cm}\) 50 & 25m & 118 & 80 \(\Arrow{.15cm}\) 60 \(\Arrow{.15cm}\) 53 \(\Arrow{.15cm}\) 50 & 25m & 28 & 23 \(\Arrow{.15cm}\) 22 \(\Arrow{.15cm}\) 22 \(\Arrow{.15cm}\) 21 & 25m & 20 & 20 \(\Arrow{.15cm}\) 19 \(\Arrow{.15cm}\) \textbf{18} & 3m41s \\
                          & \(\rho:0.86\) & \(\omega:18\) & \texttt{tc+rs} &  & 59 \(\Arrow{.15cm}\) 42 \(\Arrow{.15cm}\) 34 \(\Arrow{.15cm}\) 23 & 25m &  & 60 \(\Arrow{.15cm}\) 44 \(\Arrow{.15cm}\) 36 \(\Arrow{.15cm}\) 28 & 25m &  & 23 \(\Arrow{.15cm}\) 19 \(\Arrow{.15cm}\) \textbf{18} & 3m53s &  & 20 \(\Arrow{.15cm}\) 19 \(\Arrow{.15cm}\) \textbf{18} & 1m50s \\ \hline
                2dc.128 & \(n:128\) & \(m:2955\) & \texttt{tc} & 128 & \textbf{5} & \textless{}1s & 77 & \textbf{5} & \textless{}1s & 6 & \textbf{5} & \textless{}1s & \textbf{5} & - \(\Arrow{.15cm}\) \textbf{5} & 3s \\
                          & \(\rho:0.36\) & \(\omega:5\) & \texttt{tc+rs} &  & \textbf{5} & \textless{}1s &  & \textbf{5} & \textless{}1s &  & \textbf{5} & \textless{}1s &  & - \(\Arrow{.15cm}\) \textbf{5} & 3s \\ \hline
                brock200-1 & \(n:200\) & \(m:14834\) & \texttt{tc} & 200 & 127 \(\Arrow{.15cm}\) 52 \(\Arrow{.15cm}\) 45 \(\Arrow{.15cm}\) 40 & 25m & 166 & 107 \(\Arrow{.15cm}\) 58 \(\Arrow{.15cm}\) 52 \(\Arrow{.15cm}\) 45 & 25m & 53 & 38 \(\Arrow{.15cm}\) 28 \(\Arrow{.15cm}\) 26 \(\Arrow{.15cm}\) 25 & 25m & 27 & - \(\Arrow{.15cm}\) 26 \(\Arrow{.15cm}\) 23 \(\Arrow{.15cm}\) \textbf{21} & 20m15s \\
                          & \(\rho:0.75\) & \(\omega:21\) & \texttt{tc+rs} &  & 127 \(\Arrow{.15cm}\) 48 \(\Arrow{.15cm}\) 37 \(\Arrow{.15cm}\) 32 & 25m &  & 107 \(\Arrow{.15cm}\) 51 \(\Arrow{.15cm}\) 40 \(\Arrow{.15cm}\) 35 & 25m &  & 38 \(\Arrow{.15cm}\) 26 \(\Arrow{.15cm}\) 22 \(\Arrow{.15cm}\) \textbf{21} & 6m35s &  & - \(\Arrow{.15cm}\) 26 \(\Arrow{.15cm}\) 23 \(\Arrow{.15cm}\) \textbf{21} & 20m15s \\ \hline
                brock200-2 & \(n:200\) & \(m:9876\) & \texttt{tc} & 200 & 46 \(\Arrow{.15cm}\) \textbf{12} & 4s & 115 & 25 \(\Arrow{.15cm}\) \textbf{12} & 7s & 31 & 14 \(\Arrow{.15cm}\) \textbf{12} & 1s & 14 & - \(\Arrow{.15cm}\) 14 \(\Arrow{.15cm}\) \textbf{12} & 1m31s \\
                          & \(\rho:0.50\) & \(\omega:12\) & \texttt{tc+rs} &  & 48 \(\Arrow{.15cm}\) \textbf{12} & 5s &  & 26 \(\Arrow{.15cm}\) \textbf{12} & 9s &  & 14 \(\Arrow{.15cm}\) \textbf{12} & 1s &  & - \(\Arrow{.15cm}\) 14 \(\Arrow{.15cm}\) \textbf{12} & 1m31s \\ \hline
                brock200-3 & \(n:200\) & \(m:12048\) & \texttt{tc} & 200 & 87 \(\Arrow{.15cm}\) 17 \(\Arrow{.15cm}\) \textbf{15} & 2m18s & 135 & 61 \(\Arrow{.15cm}\) 22 \(\Arrow{.15cm}\) \textbf{15} & 4m54s & 39 & 23 \(\Arrow{.15cm}\) \textbf{15} & 14s & 18 & - \(\Arrow{.15cm}\) 18 \(\Arrow{.15cm}\) \textbf{15} & 2m14s \\
                          & \(\rho:0.61\) & \(\omega:15\) & \texttt{tc+rs} &  & 88 \(\Arrow{.15cm}\) 18 \(\Arrow{.15cm}\) \textbf{15} & 2m47s &  & 61 \(\Arrow{.15cm}\) 23 \(\Arrow{.15cm}\) 18 \(\Arrow{.15cm}\) \textbf{15} & 8m1s &  & 23 \(\Arrow{.15cm}\) \textbf{15} & 9s &  & - \(\Arrow{.15cm}\) 18 \(\Arrow{.15cm}\) \textbf{15} & 2m14s \\ \hline
                brock200-4 & \(n:200\) & \(m:13089\) & \texttt{tc} & 200 & 104 \(\Arrow{.15cm}\) 27 \(\Arrow{.15cm}\) 22 \(\Arrow{.15cm}\) \textbf{17} & 19m18s & 148 & 79 \(\Arrow{.15cm}\) 31 \(\Arrow{.15cm}\) 26 \(\Arrow{.15cm}\) 22 & 25m & 43 & 28 \(\Arrow{.15cm}\) 19 \(\Arrow{.15cm}\) \textbf{17} & 2m38s & 21 & - \(\Arrow{.15cm}\) 20 \(\Arrow{.15cm}\) \textbf{17} & 2m56s \\
                          & \(\rho:0.66\) & \(\omega:17\) & \texttt{tc+rs} &  & 104 \(\Arrow{.15cm}\) 28 \(\Arrow{.15cm}\) 20 \(\Arrow{.15cm}\) \textbf{17} & 9m22s &  & 79 \(\Arrow{.15cm}\) 32 \(\Arrow{.15cm}\) 24 \(\Arrow{.15cm}\) 18 & 25m &  & 29 \(\Arrow{.15cm}\) \textbf{17} & 28s &  & - \(\Arrow{.15cm}\) 20 \(\Arrow{.15cm}\) \textbf{17} & 2m55s \\ \hline
                C125-9 & \(n:125\) & \(m:6963\) & \texttt{tc} & 125 & 81 \(\Arrow{.15cm}\) 67 \(\Arrow{.15cm}\) 64 \(\Arrow{.15cm}\) 62 & 25m & 118 & 79 \(\Arrow{.15cm}\) 69 \(\Arrow{.15cm}\) 67 \(\Arrow{.15cm}\) 63 & 25m & 52 & 42 \(\Arrow{.15cm}\) 38 \(\Arrow{.15cm}\) 38 \(\Arrow{.15cm}\) 37 & 25m & 37 & 37 \(\Arrow{.15cm}\) \textbf{34} & 35s \\
                          & \(\rho:0.90\) & \(\omega:34\) & \texttt{tc+rs} &  & 81 \(\Arrow{.15cm}\) 53 \(\Arrow{.15cm}\) 44 \(\Arrow{.15cm}\) 36 & 25m &  & 79 \(\Arrow{.15cm}\) 53 \(\Arrow{.15cm}\) 45 \(\Arrow{.15cm}\) 39 & 25m &  & 42 \(\Arrow{.15cm}\) \textbf{34} & 48s &  & 37 \(\Arrow{.15cm}\) \textbf{34} & 35s \\ \hline
                c-fat200-1 & \(n:200\) & \(m:1534\) & \texttt{tc} & 200 & \textbf{12} & \textless{}1s & 18 & \textbf{12} & \textless{}1s & 15 & \textbf{12} & \textless{}1s & \textbf{12} & \textbf{12} & \textless{}1s \\
                          & \(\rho:0.08\) & \(\omega:12\) & \texttt{tc+rs} &  & \textbf{12} & \textless{}1s &  & \textbf{12} & \textless{}1s &  & \textbf{12} & \textless{}1s &  & \textbf{12} & \textless{}1s \\ \hline
                c-fat200-2 & \(n:200\) & \(m:3235\) & \texttt{tc} & 200 & \textbf{24} & \textless{}1s & 35 & \textbf{24} & \textless{}1s & \textbf{24} & \textbf{24} & \textless{}1s & \textbf{24} & - \(\Arrow{.15cm}\) \textbf{24} & 2s \\
                          & \(\rho:0.16\) & \(\omega:24\) & \texttt{tc+rs} &  & \textbf{24} & \textless{}1s &  & \textbf{24} & \textless{}1s &  & \textbf{24} & \textless{}1s &  & - \(\Arrow{.15cm}\) \textbf{24} & 2s \\ \hline
                c-fat200-5 & \(n:200\) & \(m:8473\) & \texttt{tc} & 200 & \textbf{58} & \textless{}1s & 87 & \textbf{58} & \textless{}1s & 84 & \textbf{58} & \textless{}1s & 60 & - \(\Arrow{.15cm}\) \textbf{58} & 20s \\
                          & \(\rho:0.43\) & \(\omega:58\) & \texttt{tc+rs} &  & \textbf{58} & \textless{}1s &  & \textbf{58} & \textless{}1s &  & \textbf{58} & \textless{}1s &  & - \(\Arrow{.15cm}\) \textbf{58} & 20s \\ \hline
                c-fat500-1 & \(n:500\) & \(m:4459\) & \texttt{tc} & 500 & \textbf{14} & \textless{}1s & 21 & \textbf{14} & \textless{}1s & \textbf{14} & \textbf{14} & \textless{}1s & \textbf{14} & - \(\Arrow{.15cm}\) \textbf{14} & 7s \\
                          & \(\rho:0.04\) & \(\omega:14\) & \texttt{tc+rs} &  & \textbf{14} & \textless{}1s &  & \textbf{14} & \textless{}1s &  & \textbf{14} & \textless{}1s &  & - \(\Arrow{.15cm}\) \textbf{14} & 7s \\ \hline
                c-fat500-2 & \(n:500\) & \(m:9139\) & \texttt{tc} & 500 & \textbf{26} & \textless{}1s & 39 & \textbf{26} & \textless{}1s & \textbf{26} & \textbf{26} & \textless{}1s & \textbf{26} & - \(\Arrow{.15cm}\) \textbf{26} & 36s \\
                          & \(\rho:0.07\) & \(\omega:26\) & \texttt{tc+rs} &  & \textbf{26} & \textless{}1s &  & \textbf{26} & \textless{}1s &  & \textbf{26} & \textless{}1s &  & - \(\Arrow{.15cm}\) \textbf{26} & 36s \\ \hline
                evil-chv12x10 & \(n:120\) & \(m:6595\) & \texttt{tc} & 120 & 84 \(\Arrow{.15cm}\) 72 \(\Arrow{.15cm}\) 67 \(\Arrow{.15cm}\) 64 & 25m & 113 & 85 \(\Arrow{.15cm}\) 75 \(\Arrow{.15cm}\) 72 \(\Arrow{.15cm}\) 68 & 25m & 35 & 30 \(\Arrow{.15cm}\) 29 \(\Arrow{.15cm}\) 29 \(\Arrow{.15cm}\) 28 & 25m & 24 & 24 \(\Arrow{.15cm}\) 22 \(\Arrow{.15cm}\) 22 \(\Arrow{.15cm}\) 22 & 25m \\
                          & \(\rho:0.92\) & \(\omega:20\) & \texttt{tc+rs} &  & \textbf{20} & \textless{}1s &  & \textbf{20} & \textless{}1s &  & \textbf{20} & \textless{}1s &  & \textbf{20} & \textless{}1s \\ \hline
                evil-myc5x24 & \(n:120\) & \(m:6904\) & \texttt{tc} & 120 & 101 \(\Arrow{.15cm}\) 97 \(\Arrow{.15cm}\) 95 \(\Arrow{.15cm}\) 94 & 25m & 118 & 101 \(\Arrow{.15cm}\) 97 \(\Arrow{.15cm}\) 96 \(\Arrow{.15cm}\) 95 & 25m & 61 & 55 \(\Arrow{.15cm}\) 54 \(\Arrow{.15cm}\) 54 \(\Arrow{.15cm}\) 54 & 25m & 52 & 52 \(\Arrow{.15cm}\) 49 \(\Arrow{.15cm}\) 49 \(\Arrow{.15cm}\) 49 & 25m \\
                          & \(\rho:0.97\) & \(\omega:48\) & \texttt{tc+rs} &  & \textbf{48} & \textless{}1s &  & \textbf{48} & \textless{}1s &  & \textbf{48} & \textless{}1s &  & \textbf{48} & \textless{}1s \\ \hline
                evil-myc11x11 & \(n:121\) & \(m:6752\) & \texttt{tc} & 121 & 87 \(\Arrow{.15cm}\) 75 \(\Arrow{.15cm}\) 72 \(\Arrow{.15cm}\) 70 & 25m & 116 & 88 \(\Arrow{.15cm}\) 79 \(\Arrow{.15cm}\) 76 \(\Arrow{.15cm}\) 73 & 25m & 40 & 33 \(\Arrow{.15cm}\) 32 \(\Arrow{.15cm}\) 32 \(\Arrow{.15cm}\) 31 & 25m & 26 & 26 \(\Arrow{.15cm}\) 25 \(\Arrow{.15cm}\) 24 \(\Arrow{.15cm}\) 24 & 25m \\
                          & \(\rho:0.93\) & \(\omega:22\) & \texttt{tc+rs} &  & \textbf{22} & \textless{}1s &  & \textbf{22} & \textless{}1s &  & \textbf{22} & \textless{}1s &  & \textbf{22} & \textless{}1s \\ \hline
                evil-s3m25x5 & \(n:125\) & \(m:6877\) & \texttt{tc} & 125 & 84 \(\Arrow{.15cm}\) 63 \(\Arrow{.15cm}\) 57 \(\Arrow{.15cm}\) 54 & 25m & 113 & 83 \(\Arrow{.15cm}\) 67 \(\Arrow{.15cm}\) 64 \(\Arrow{.15cm}\) 58 & 25m & 41 & 32 \(\Arrow{.15cm}\) 30 \(\Arrow{.15cm}\) 29 \(\Arrow{.15cm}\) 28 & 25m & 25 & 25 \(\Arrow{.15cm}\) 22 \(\Arrow{.15cm}\) 22 \(\Arrow{.15cm}\) 22 & 25m \\
                          & \(\rho:0.89\) & \(\omega:20\) & \texttt{tc+rs} &  & 86 \(\Arrow{.15cm}\) 46 \(\Arrow{.15cm}\) 29 \(\Arrow{.15cm}\) \textbf{20} & 8m11s &  & 83 \(\Arrow{.15cm}\) 42 \(\Arrow{.15cm}\) 23 \(\Arrow{.15cm}\) \textbf{20} & 5m27s &  & 32 \(\Arrow{.15cm}\) 27 \(\Arrow{.15cm}\) 23 \(\Arrow{.15cm}\) \textbf{20} & 12m13s &  & 25 \(\Arrow{.15cm}\) 22 \(\Arrow{.15cm}\) 22 \(\Arrow{.15cm}\) 22 & 25m \\ \hline
                evil-myc23x6 & \(n:138\) & \(m:8211\) & \texttt{tc} & 138 & 95 \(\Arrow{.15cm}\) 54 \(\Arrow{.15cm}\) 48 \(\Arrow{.15cm}\) 41 & 25m & 127 & 90 \(\Arrow{.15cm}\) 60 \(\Arrow{.15cm}\) 53 \(\Arrow{.15cm}\) 47 & 25m & 28 & 21 \(\Arrow{.15cm}\) 20 \(\Arrow{.15cm}\) 19 \(\Arrow{.15cm}\) 18 & 25m & 15 & 15 \(\Arrow{.15cm}\) 14 \(\Arrow{.15cm}\) 14 \(\Arrow{.15cm}\) 13 & 25m \\
                          & \(\rho:0.87\) & \(\omega:12\) & \texttt{tc+rs} &  & 95 \(\Arrow{.15cm}\) \textbf{12} & 11s &  & 90 \(\Arrow{.15cm}\) \textbf{12} & 9s &  & 21 \(\Arrow{.15cm}\) 19 \(\Arrow{.15cm}\) \textbf{12} & 3m18s &  & 15 \(\Arrow{.15cm}\) 14 \(\Arrow{.15cm}\) 14 \(\Arrow{.15cm}\) 13 & 25m \\ \hline
                evil-myc5x30 & \(n:150\) & \(m:10837\) & \texttt{tc} & 150 & 132 \(\Arrow{.15cm}\) 124 \(\Arrow{.15cm}\) 123 \(\Arrow{.15cm}\) 122 & 25m & 147 & 130 \(\Arrow{.15cm}\) 126 \(\Arrow{.15cm}\) 124 \(\Arrow{.15cm}\) 122 & 25m & 77 & 70 \(\Arrow{.15cm}\) 66 \(\Arrow{.15cm}\) 66 \(\Arrow{.15cm}\) 66 & 25m & 65 & 65 \(\Arrow{.15cm}\) 63 \(\Arrow{.15cm}\) 61 \(\Arrow{.15cm}\) 61 & 25m \\
                          & \(\rho:0.97\) & \(\omega:60\) & \texttt{tc+rs} &  & \textbf{60} & \textless{}1s &  & \textbf{60} & \textless{}1s &  & \textbf{60} & \textless{}1s &  & \textbf{60} & \textless{}1s \\ \hline
                evil-s3m25x6 & \(n:150\) & \(m:10073\) & \texttt{tc} & 150 & 120 \(\Arrow{.15cm}\) 85 \(\Arrow{.15cm}\) 82 \(\Arrow{.15cm}\) 77 & 25m & 138 & 115 \(\Arrow{.15cm}\) 90 \(\Arrow{.15cm}\) 86 \(\Arrow{.15cm}\) 83 & 25m & 50 & 42 \(\Arrow{.15cm}\) 37 \(\Arrow{.15cm}\) 37 \(\Arrow{.15cm}\) 36 & 25m & 30 & 30 \(\Arrow{.15cm}\) 28 \(\Arrow{.15cm}\) 27 \(\Arrow{.15cm}\) 27 & 25m \\
                          & \(\rho:0.90\) & \(\omega:24\) & \texttt{tc+rs} &  & 120 \(\Arrow{.15cm}\) 101 \(\Arrow{.15cm}\) 95 \(\Arrow{.15cm}\) 89 & 25m &  & 115 \(\Arrow{.15cm}\) 93 \(\Arrow{.15cm}\) 89 \(\Arrow{.15cm}\) 82 & 25m &  & 42 \(\Arrow{.15cm}\) 36 \(\Arrow{.15cm}\) 32 \(\Arrow{.15cm}\) 30 & 25m &  & 30 \(\Arrow{.15cm}\) 28 \(\Arrow{.15cm}\) 27 \(\Arrow{.15cm}\) 27 & 25m \\ \hline
                evil-myc11x14 & \(n:154\) & \(m:11080\) & \texttt{tc} & 154 & 131 \(\Arrow{.15cm}\) 106 \(\Arrow{.15cm}\) 103 \(\Arrow{.15cm}\) 100 & 25m & 149 & 127 \(\Arrow{.15cm}\) 109 \(\Arrow{.15cm}\) 107 \(\Arrow{.15cm}\) 103 & 25m & 48 & 45 \(\Arrow{.15cm}\) 41 \(\Arrow{.15cm}\) 41 \(\Arrow{.15cm}\) 41 & 25m & 33 & 33 \(\Arrow{.15cm}\) 32 \(\Arrow{.15cm}\) 32 \(\Arrow{.15cm}\) 31 & 25m \\
                          & \(\rho:0.94\) & \(\omega:28\) & \texttt{tc+rs} &  & \textbf{28} & \textless{}1s &  & \textbf{28} & \textless{}1s &  & \textbf{28} & \textless{}1s &  & \textbf{28} & \textless{}1s \\ \hline
                hamming6-2 & \(n:64\) & \(m:1824\) & \texttt{tc} & 64 & 37 \(\Arrow{.15cm}\) \textbf{32} & 20s & 58 & 39 \(\Arrow{.15cm}\) 33 \(\Arrow{.15cm}\) \textbf{32} & 1m1s & \textbf{32} & \textbf{32} & \textless{}1s & \textbf{32} & \textbf{32} & \textless{}1s \\
                          & \(\rho:0.90\) & \(\omega:32\) & \texttt{tc+rs} &  & 35 \(\Arrow{.15cm}\) \textbf{32} & 2s &  & 35 \(\Arrow{.15cm}\) \textbf{32} & 2s &  & \textbf{32} & \textless{}1s &  & \textbf{32} & \textless{}1s \\ \hline
                hamming6-4 & \(n:64\) & \(m:704\) & \texttt{tc} & 64 & \textbf{4} & \textless{}1s & 23 & \textbf{4} & \textless{}1s & 7 & \textbf{4} & \textless{}1s & 5 & \textbf{4} & \textless{}1s \\
                          & \(\rho:0.35\) & \(\omega:4\) & \texttt{tc+rs} &  & \textbf{4} & \textless{}1s &  & \textbf{4} & \textless{}1s &  & \textbf{4} & \textless{}1s &  & \textbf{4} & \textless{}1s \\ \hline
                johnson8-2-4 & \(n:28\) & \(m:210\) & \texttt{tc} & 28 & \textbf{4} & \textless{}1s & 16 & \textbf{4} & \textless{}1s & 6 & \textbf{4} & \textless{}1s & \textbf{4} & \textbf{4} & \textless{}1s \\
                          & \(\rho:0.56\) & \(\omega:4\) & \texttt{tc+rs} &  & \textbf{4} & \textless{}1s &  & \textbf{4} & \textless{}1s &  & \textbf{4} & \textless{}1s &  & \textbf{4} & \textless{}1s \\ \hline
                johnson8-4-4 & \(n:70\) & \(m:1855\) & \texttt{tc} & 70 & \textbf{14} & \textless{}1s & 54 & 15 \(\Arrow{.15cm}\) \textbf{14} & 1s & 17 & \textbf{14} & \textless{}1s & \textbf{14} & \textbf{14} & \textless{}1s \\
                          & \(\rho:0.77\) & \(\omega:14\) & \texttt{tc+rs} &  & \textbf{14} & \textless{}1s &  & \textbf{14} & \textless{}1s &  & \textbf{14} & \textless{}1s &  & \textbf{14} & \textless{}1s \\ \hline
                johnson16-2-4 & \(n:120\) & \(m:5460\) & \texttt{tc} & 120 & 39 \(\Arrow{.15cm}\) \textbf{8} & 24s & 92 & 35 \(\Arrow{.15cm}\) \textbf{8} & 57s & 14 & 11 \(\Arrow{.15cm}\) 9 \(\Arrow{.15cm}\) \textbf{8} & 1m20s & \textbf{8} & \textbf{8} & \textless{}1s \\
                          & \(\rho:0.76\) & \(\omega:8\) & \texttt{tc+rs} &  & 41 \(\Arrow{.15cm}\) \textbf{8} & 24s &  & 41 \(\Arrow{.15cm}\) \textbf{8} & 31s &  & 11 \(\Arrow{.15cm}\) 9 \(\Arrow{.15cm}\) 9 \(\Arrow{.15cm}\) \textbf{8} & 9m31s &  & \textbf{8} & \textless{}1s \\ \hline
                keller4 & \(n:171\) & \(m:9435\) & \texttt{tc} & 171 & 73 \(\Arrow{.15cm}\) 16 \(\Arrow{.15cm}\) \textbf{11} & 3m32s & 125 & 52 \(\Arrow{.15cm}\) 21 \(\Arrow{.15cm}\) 14 \(\Arrow{.15cm}\) \textbf{11} & 7m39s & 24 & 14 \(\Arrow{.15cm}\) \textbf{11} & 8s & 14 & - \(\Arrow{.15cm}\) 12 \(\Arrow{.15cm}\) \textbf{11} & 1m12s \\
                          & \(\rho:0.65\) & \(\omega:11\) & \texttt{tc+rs} &  & 73 \(\Arrow{.15cm}\) 12 \(\Arrow{.15cm}\) \textbf{11} & 1m6s &  & 53 \(\Arrow{.15cm}\) 13 \(\Arrow{.15cm}\) \textbf{11} & 1m40s &  & 14 \(\Arrow{.15cm}\) \textbf{11} & 8s &  & - \(\Arrow{.15cm}\) 12 \(\Arrow{.15cm}\) \textbf{11} & 1m12s \\ \hline
                MANN-a9 & \(n:45\) & \(m:918\) & \texttt{tc} & 45 & 27 \(\Arrow{.15cm}\) \textbf{16} & 57s & 42 & 28 \(\Arrow{.15cm}\) 21 \(\Arrow{.15cm}\) \textbf{16} & 3m27s & 19 & 17 \(\Arrow{.15cm}\) \textbf{16} & 15s & 17 & \textbf{16} & \textless{}1s \\
                          & \(\rho:0.93\) & \(\omega:16\) & \texttt{tc+rs} &  & \textbf{16} & \textless{}1s &  & \textbf{16} & \textless{}1s &  & \textbf{16} & \textless{}1s &  & \textbf{16} & \textless{}1s \\ \hline
                p-hat300-1 & \(n:300\) & \(m:10933\) & \texttt{tc} & 300 & \textbf{8} & \textless{}1s & 133 & \textbf{8} & \textless{}1s & 22 & \textbf{8} & \textless{}1s & 10 & - \(\Arrow{.15cm}\) - \(\Arrow{.15cm}\) \textbf{8} & 1m21s \\
                          & \(\rho:0.24\) & \(\omega:8\) & \texttt{tc+rs} &  & \textbf{8} & \textless{}1s &  & \textbf{8} & \textless{}1s &  & \textbf{8} & \textless{}1s &  & - \(\Arrow{.15cm}\) - \(\Arrow{.15cm}\) \textbf{8} & 1m21s \\ \hline
                san200-0-7-1 & \(n:200\) & \(m:13930\) & \texttt{tc} & 200 & 116 \(\Arrow{.15cm}\) 68 \(\Arrow{.15cm}\) 64 \(\Arrow{.15cm}\) 60 & 25m & 156 & 100 \(\Arrow{.15cm}\) 70 \(\Arrow{.15cm}\) 67 \(\Arrow{.15cm}\) 63 & 25m & 42 & \textbf{30} & \textless{}1s & \textbf{30} & - \(\Arrow{.15cm}\) \textbf{30} & 10s \\
                          & \(\rho:0.70\) & \(\omega:30\) & \texttt{tc+rs} &  & 116 \(\Arrow{.15cm}\) 62 \(\Arrow{.15cm}\) 58 \(\Arrow{.15cm}\) 58 & 25m &  & 100 \(\Arrow{.15cm}\) 65 \(\Arrow{.15cm}\) 62 \(\Arrow{.15cm}\) 58 & 25m &  & \textbf{30} & \textless{}1s &  & - \(\Arrow{.15cm}\) \textbf{30} & 10s \\ \hline
                san200-0-7-2 & \(n:200\) & \(m:13930\) & \texttt{tc} & 200 & 108 \(\Arrow{.15cm}\) 80 \(\Arrow{.15cm}\) 72 \(\Arrow{.15cm}\) 68 & 25m & 165 & 102 \(\Arrow{.15cm}\) 82 \(\Arrow{.15cm}\) 77 \(\Arrow{.15cm}\) 72 & 25m & 23 & \textbf{18} & \textless{}1s & \textbf{18} & - \(\Arrow{.15cm}\) \textbf{18} & 17s \\
                          & \(\rho:0.70\) & \(\omega:18\) & \texttt{tc+rs} &  & 109 \(\Arrow{.15cm}\) 33 \(\Arrow{.15cm}\) 26 \(\Arrow{.15cm}\) \textbf{18} & 23m6s &  & 100 \(\Arrow{.15cm}\) 52 \(\Arrow{.15cm}\) 36 \(\Arrow{.15cm}\) 21 & 25m &  & \textbf{18} & \textless{}1s &  & - \(\Arrow{.15cm}\) \textbf{18} & 17s \\ \hline
                sanr200-0-7 & \(n:200\) & \(m:13868\) & \texttt{tc} & 200 & 114 \(\Arrow{.15cm}\) 36 \(\Arrow{.15cm}\) 30 \(\Arrow{.15cm}\) 26 & 25m & 162 & 91 \(\Arrow{.15cm}\) 43 \(\Arrow{.15cm}\) 35 \(\Arrow{.15cm}\) 29 & 25m & 47 & 33 \(\Arrow{.15cm}\) 22 \(\Arrow{.15cm}\) 21 \(\Arrow{.15cm}\) 19 & 25m & 23 & - \(\Arrow{.15cm}\) 22 \(\Arrow{.15cm}\) 19 \(\Arrow{.15cm}\) \textbf{18} & 12m12s \\
                          & \(\rho:0.70\) & \(\omega:18\) & \texttt{tc+rs} &  & 114 \(\Arrow{.15cm}\) 35 \(\Arrow{.15cm}\) 26 \(\Arrow{.15cm}\) 21 & 25m &  & 92 \(\Arrow{.15cm}\) 40 \(\Arrow{.15cm}\) 29 \(\Arrow{.15cm}\) 25 & 25m &  & 33 \(\Arrow{.15cm}\) 19 \(\Arrow{.15cm}\) \textbf{18} & 1m33s &  & - \(\Arrow{.15cm}\) 22 \(\Arrow{.15cm}\) 19 \(\Arrow{.15cm}\) \textbf{18} & 12m13s \\ \hline
            \end{tabular}
        \end{adjustbox}
    }
    \caption{Upper-bound progress on small graph instances with at most \(15000\) edges.}
    \label{tab:small}
\end{table}

\begin{table}[H]
    \centering
    {
        \fontsize{9pt}{9pt}\selectfont
        \setlength{\arrayrulewidth}{0.3pt}

        \renewcommand{\arraystretch}{0.8}
        \setlength{\extrarowheight}{2pt}

        \begin{adjustbox}{max width=\textwidth}
            \setlength{\dashlinedash}{0.6pt}
            \setlength{\dashlinegap}{1.5pt}
            \setlength{\arrayrulewidth}{0.4pt}
            \begin{tabular}{l l l : l | c: l r |c: l r: |c: l r: |c: l r}
                \hline
                Graph & & & Algorithm & \(\omega^u\) & \multicolumn{2}{:c|}{\texttt{trivial}} & \(\omega^u\) & \multicolumn{2}{:c:|}{\texttt{degree+density}} & \(\omega^u\) & \multicolumn{2}{:c:|}{\texttt{dsatur}} & \(\omega^u\) & \multicolumn{2}{:c}{\texttt{sdp}} \\ \hline
                1dc.256 & \(n:256\) & \(m:28801\) & \texttt{tc} & 256 & 212 \(\Arrow{.15cm}\) 145 \(\Arrow{.15cm}\) 137 \(\Arrow{.15cm}\) 127 & 25m & 240 & 199 \(\Arrow{.15cm}\) 151 \(\Arrow{.15cm}\) 143 \(\Arrow{.15cm}\) 135 & 25m & 43 & 37 \(\Arrow{.15cm}\) 33 \(\Arrow{.15cm}\) 33 \(\Arrow{.15cm}\) 33 & 25m & \textbf{30} & - \(\Arrow{.15cm}\) \textbf{30} & 5s \\
                          & \(\rho:0.88\) & \(\omega:30\) & \texttt{tc+rs} &  & 198 \(\Arrow{.15cm}\) 138 \(\Arrow{.15cm}\) 138 \(\Arrow{.15cm}\) 135 & 25m &  & 183 \(\Arrow{.15cm}\) 141 \(\Arrow{.15cm}\) 139 \(\Arrow{.15cm}\) 139 & 25m &  & 37 \(\Arrow{.15cm}\) 33 \(\Arrow{.15cm}\) 33 \(\Arrow{.15cm}\) 33 & 25m &  & - \(\Arrow{.15cm}\) \textbf{30} & 5s \\ \hline
                1et.256 & \(n:256\) & \(m:30976\) & \texttt{tc} & 256 & 232 \(\Arrow{.15cm}\) 200 \(\Arrow{.15cm}\) 198 \(\Arrow{.15cm}\) 195 & 25m & 249 & 226 \(\Arrow{.15cm}\) 203 \(\Arrow{.15cm}\) 199 \(\Arrow{.15cm}\) 194 & 25m & 69 & 65 \(\Arrow{.15cm}\) 63 \(\Arrow{.15cm}\) 62 \(\Arrow{.15cm}\) 62 & 25m & 55 & - \(\Arrow{.15cm}\) 54 \(\Arrow{.15cm}\) 54 \(\Arrow{.15cm}\) 53 & 25m \\
                          & \(\rho:0.95\) & \(\omega:50\) & \texttt{tc+rs} &  & \textbf{50} & \textless{}1s &  & \textbf{50} & \textless{}1s &  & \textbf{50} & \textless{}1s &  & - \(\Arrow{.15cm}\) \textbf{50} & 2s \\ \hline
                1tc.256 & \(n:256\) & \(m:31328\) & \texttt{tc} & 256 & 234 \(\Arrow{.15cm}\) 208 \(\Arrow{.15cm}\) 207 \(\Arrow{.15cm}\) 204 & 25m & 250 & 229 \(\Arrow{.15cm}\) 210 \(\Arrow{.15cm}\) 207 \(\Arrow{.15cm}\) 203 & 25m & 69 & 65 \(\Arrow{.15cm}\) 64 \(\Arrow{.15cm}\) \textbf{63} & 1m52s & \textbf{63} & \textbf{63} & \textless{}1s \\
                          & \(\rho:0.96\) & \(\omega:63\) & \texttt{tc+rs} &  & \textbf{63} & \textless{}1s &  & \textbf{63} & \textless{}1s &  & \textbf{63} & \textless{}1s &  & \textbf{63} & 1s \\ \hline
                1zc.256 & \(n:256\) & \(m:29824\) & \texttt{tc} & 256 & 231 \(\Arrow{.15cm}\) 177 \(\Arrow{.15cm}\) 164 \(\Arrow{.15cm}\) 160 & 25m & 244 & 220 \(\Arrow{.15cm}\) 187 \(\Arrow{.15cm}\) 177 \(\Arrow{.15cm}\) 177 & 25m & 56 & 49 \(\Arrow{.15cm}\) 46 \(\Arrow{.15cm}\) 46 \(\Arrow{.15cm}\) 45 & 25m & 38 & - \(\Arrow{.15cm}\) 38 \(\Arrow{.15cm}\) \textbf{36} & 2m39s \\
                          & \(\rho:0.91\) & \(\omega:36\) & \texttt{tc+rs} &  & 215 \(\Arrow{.15cm}\) 156 \(\Arrow{.15cm}\) 156 \(\Arrow{.15cm}\) 156 & 25m &  & 199 \(\Arrow{.15cm}\) 157 \(\Arrow{.15cm}\) 157 \(\Arrow{.15cm}\) 157 & 25m &  & 49 \(\Arrow{.15cm}\) 46 \(\Arrow{.15cm}\) 46 \(\Arrow{.15cm}\) 43 & 25m &  & - \(\Arrow{.15cm}\) 37 \(\Arrow{.15cm}\) \textbf{36} & 1m51s \\ \hline
                2dc.256 & \(n:256\) & \(m:15457\) & \texttt{tc} & 256 & 60 \(\Arrow{.15cm}\) \textbf{7} & 6s & 176 & 38 \(\Arrow{.15cm}\) \textbf{7} & 9s & 10 & \textbf{7} & \textless{}1s & \textbf{7} & - \(\Arrow{.15cm}\) - \(\Arrow{.15cm}\) \textbf{7} & 3m59s \\
                          & \(\rho:0.47\) & \(\omega:7\) & \texttt{tc+rs} &  & 9 \(\Arrow{.15cm}\) \textbf{7} & 1s &  & \textbf{7} & \textless{}1s &  & \textbf{7} & \textless{}1s &  & - \(\Arrow{.15cm}\) - \(\Arrow{.15cm}\) \textbf{7} & 4m1s \\ \hline
                C250-9 & \(n:250\) & \(m:27984\) & \texttt{tc} & 250 & 210 \(\Arrow{.15cm}\) 150 \(\Arrow{.15cm}\) 142 \(\Arrow{.15cm}\) 136 & 25m & 237 & 198 \(\Arrow{.15cm}\) 156 \(\Arrow{.15cm}\) 149 \(\Arrow{.15cm}\) 144 & 25m & 92 & 80 \(\Arrow{.15cm}\) 70 \(\Arrow{.15cm}\) 69 \(\Arrow{.15cm}\) 69 & 25m & 56 & - \(\Arrow{.15cm}\) 55 \(\Arrow{.15cm}\) 52 \(\Arrow{.15cm}\) 51 & 25m \\
                          & \(\rho:0.90\) & \(\omega:44\) & \texttt{tc+rs} &  & 210 \(\Arrow{.15cm}\) 156 \(\Arrow{.15cm}\) 156 \(\Arrow{.15cm}\) 156 & 25m &  & 198 \(\Arrow{.15cm}\) 152 \(\Arrow{.15cm}\) 148 \(\Arrow{.15cm}\) 148 & 25m &  & 80 \(\Arrow{.15cm}\) 70 \(\Arrow{.15cm}\) 66 \(\Arrow{.15cm}\) 62 & 25m &  & - \(\Arrow{.15cm}\) 55 \(\Arrow{.15cm}\) 52 \(\Arrow{.15cm}\) 51 & 25m \\ \hline
                c-fat500-5 & \(n:500\) & \(m:23191\) & \texttt{tc} & 500 & 87 \(\Arrow{.15cm}\) \textbf{64} & 3s & 96 & 74 \(\Arrow{.15cm}\) \textbf{64} & 2s & \textbf{64} & \textbf{64} & \textless{}1s & \textbf{64} & - \(\Arrow{.15cm}\) - \(\Arrow{.15cm}\) - \(\Arrow{.15cm}\) \textbf{64} & 7m26s \\
                          & \(\rho:0.19\) & \(\omega:64\) & \texttt{tc+rs} &  & 89 \(\Arrow{.15cm}\) \textbf{64} & 3s &  & 76 \(\Arrow{.15cm}\) \textbf{64} & 2s &  & \textbf{64} & \textless{}1s &  & - \(\Arrow{.15cm}\) - \(\Arrow{.15cm}\) - \(\Arrow{.15cm}\) \textbf{64} & 7m27s \\ \hline
                evil-chv12x15 & \(n:180\) & \(m:15166\) & \texttt{tc} & 180 & 160 \(\Arrow{.15cm}\) 128 \(\Arrow{.15cm}\) 124 \(\Arrow{.15cm}\) 120 & 25m & 173 & 155 \(\Arrow{.15cm}\) 132 \(\Arrow{.15cm}\) 128 \(\Arrow{.15cm}\) 124 & 25m & 55 & 50 \(\Arrow{.15cm}\) 45 \(\Arrow{.15cm}\) 45 \(\Arrow{.15cm}\) 45 & 25m & 36 & 36 \(\Arrow{.15cm}\) 35 \(\Arrow{.15cm}\) 34 \(\Arrow{.15cm}\) 34 & 25m \\
                          & \(\rho:0.94\) & \(\omega:30\) & \texttt{tc+rs} &  & 148 \(\Arrow{.15cm}\) 103 \(\Arrow{.15cm}\) 94 \(\Arrow{.15cm}\) 83 & 25m &  & 142 \(\Arrow{.15cm}\) 101 \(\Arrow{.15cm}\) 91 \(\Arrow{.15cm}\) 78 & 25m &  & 43 \(\Arrow{.15cm}\) 38 \(\Arrow{.15cm}\) 37 \(\Arrow{.15cm}\) 34 & 25m &  & 36 \(\Arrow{.15cm}\) 33 \(\Arrow{.15cm}\) 33 \(\Arrow{.15cm}\) 33 & 25m \\ \hline
                evil-myc5x36 & \(n:180\) & \(m:15671\) & \texttt{tc} & 180 & 166 \(\Arrow{.15cm}\) 153 \(\Arrow{.15cm}\) 150 \(\Arrow{.15cm}\) 149 & 25m & 177 & 164 \(\Arrow{.15cm}\) 153 \(\Arrow{.15cm}\) 151 \(\Arrow{.15cm}\) 150 & 25m & 90 & 84 \(\Arrow{.15cm}\) 80 \(\Arrow{.15cm}\) 80 \(\Arrow{.15cm}\) 80 & 25m & 77 & 77 \(\Arrow{.15cm}\) 74 \(\Arrow{.15cm}\) 73 \(\Arrow{.15cm}\) \textbf{72} & 8m38s \\
                          & \(\rho:0.97\) & \(\omega:72\) & \texttt{tc+rs} &  & 115 \(\Arrow{.15cm}\) 73 \(\Arrow{.15cm}\) \textbf{72} & 1m6s &  & 108 \(\Arrow{.15cm}\) 74 \(\Arrow{.15cm}\) \textbf{72} & 1m14s &  & 73 \(\Arrow{.15cm}\) \textbf{72} & 1s &  & 77 \(\Arrow{.15cm}\) \textbf{72} & 38s \\ \hline
                evil-myc23x8 & \(n:184\) & \(m:15072\) & \texttt{tc} & 184 & 152 \(\Arrow{.15cm}\) 98 \(\Arrow{.15cm}\) 91 \(\Arrow{.15cm}\) 85 & 25m & 173 & 144 \(\Arrow{.15cm}\) 107 \(\Arrow{.15cm}\) 99 \(\Arrow{.15cm}\) 91 & 25m & 37 & 32 \(\Arrow{.15cm}\) 27 \(\Arrow{.15cm}\) 27 \(\Arrow{.15cm}\) 27 & 25m & 20 & 20 \(\Arrow{.15cm}\) 19 \(\Arrow{.15cm}\) 19 \(\Arrow{.15cm}\) 19 & 25m \\
                          & \(\rho:0.90\) & \(\omega:16\) & \texttt{tc+rs} &  & 152 \(\Arrow{.15cm}\) 80 \(\Arrow{.15cm}\) 19 \(\Arrow{.15cm}\) 19 & 25m &  & 144 \(\Arrow{.15cm}\) 72 \(\Arrow{.15cm}\) \textbf{16} & 2m52s &  & 32 \(\Arrow{.15cm}\) 27 \(\Arrow{.15cm}\) 27 \(\Arrow{.15cm}\) 18 & 25m &  & 20 \(\Arrow{.15cm}\) 19 \(\Arrow{.15cm}\) 19 \(\Arrow{.15cm}\) 19 & 25m \\ \hline
                evil-myc11x17 & \(n:187\) & \(m:16490\) & \texttt{tc} & 187 & 169 \(\Arrow{.15cm}\) 140 \(\Arrow{.15cm}\) 136 \(\Arrow{.15cm}\) 132 & 25m & 181 & 165 \(\Arrow{.15cm}\) 143 \(\Arrow{.15cm}\) 139 \(\Arrow{.15cm}\) 135 & 25m & 63 & 56 \(\Arrow{.15cm}\) 52 \(\Arrow{.15cm}\) 52 \(\Arrow{.15cm}\) 52 & 25m & 40 & 40 \(\Arrow{.15cm}\) 39 \(\Arrow{.15cm}\) 39 \(\Arrow{.15cm}\) 39 & 25m \\
                          & \(\rho:0.95\) & \(\omega:34\) & \texttt{tc+rs} &  & 141 \(\Arrow{.15cm}\) 84 \(\Arrow{.15cm}\) 64 \(\Arrow{.15cm}\) 46 & 25m &  & 134 \(\Arrow{.15cm}\) 87 \(\Arrow{.15cm}\) 65 \(\Arrow{.15cm}\) 44 & 25m &  & 44 \(\Arrow{.15cm}\) 40 \(\Arrow{.15cm}\) 39 \(\Arrow{.15cm}\) \textbf{34} & 20m12s &  & 40 \(\Arrow{.15cm}\) 36 \(\Arrow{.15cm}\) 36 \(\Arrow{.15cm}\) 36 & 25m \\ \hline
                evil-s3m25x8 & \(n:200\) & \(m:18350\) & \texttt{tc} & 200 & 177 \(\Arrow{.15cm}\) 135 \(\Arrow{.15cm}\) 130 \(\Arrow{.15cm}\) 126 & 25m & 188 & 169 \(\Arrow{.15cm}\) 142 \(\Arrow{.15cm}\) 137 \(\Arrow{.15cm}\) 132 & 25m & 66 & 58 \(\Arrow{.15cm}\) 52 \(\Arrow{.15cm}\) 52 \(\Arrow{.15cm}\) 52 & 25m & 40 & 40 \(\Arrow{.15cm}\) 38 \(\Arrow{.15cm}\) 38 \(\Arrow{.15cm}\) 37 & 25m \\
                          & \(\rho:0.92\) & \(\omega:32\) & \texttt{tc+rs} &  & 177 \(\Arrow{.15cm}\) 136 \(\Arrow{.15cm}\) 134 \(\Arrow{.15cm}\) 134 & 25m &  & 169 \(\Arrow{.15cm}\) 129 \(\Arrow{.15cm}\) 129 \(\Arrow{.15cm}\) 129 & 25m &  & 58 \(\Arrow{.15cm}\) 52 \(\Arrow{.15cm}\) 49 \(\Arrow{.15cm}\) 44 & 25m &  & 40 \(\Arrow{.15cm}\) 38 \(\Arrow{.15cm}\) 38 \(\Arrow{.15cm}\) 37 & 25m \\ \hline
                evil-myc5x42 & \(n:210\) & \(m:21404\) & \texttt{tc} & 210 & 197 \(\Arrow{.15cm}\) 181 \(\Arrow{.15cm}\) 180 \(\Arrow{.15cm}\) 179 & 25m & 207 & 194 \(\Arrow{.15cm}\) 183 \(\Arrow{.15cm}\) 181 \(\Arrow{.15cm}\) 180 & 25m & 106 & 99 \(\Arrow{.15cm}\) 94 \(\Arrow{.15cm}\) 94 \(\Arrow{.15cm}\) 94 & 25m & 90 & 90 \(\Arrow{.15cm}\) 87 \(\Arrow{.15cm}\) 87 \(\Arrow{.15cm}\) 85 & 25m \\
                          & \(\rho:0.98\) & \(\omega:84\) & \texttt{tc+rs} &  & 162 \(\Arrow{.15cm}\) 101 \(\Arrow{.15cm}\) 88 \(\Arrow{.15cm}\) \textbf{84} & 8m4s &  & 149 \(\Arrow{.15cm}\) 99 \(\Arrow{.15cm}\) 88 \(\Arrow{.15cm}\) \textbf{84} & 9m30s &  & 102 \(\Arrow{.15cm}\) \textbf{84} & 2s &  & 90 \(\Arrow{.15cm}\) 89 \(\Arrow{.15cm}\) \textbf{84} & 3m8s \\ \hline
                evil-myc11x20 & \(n:220\) & \(m:22960\) & \texttt{tc} & 220 & 201 \(\Arrow{.15cm}\) 169 \(\Arrow{.15cm}\) 165 \(\Arrow{.15cm}\) 162 & 25m & 214 & 197 \(\Arrow{.15cm}\) 176 \(\Arrow{.15cm}\) 172 \(\Arrow{.15cm}\) 168 & 25m & 73 & 66 \(\Arrow{.15cm}\) 61 \(\Arrow{.15cm}\) 61 \(\Arrow{.15cm}\) 61 & 25m & 47 & 47 \(\Arrow{.15cm}\) 47 \(\Arrow{.15cm}\) 47 \(\Arrow{.15cm}\) 46 & 25m \\
                          & \(\rho:0.95\) & \(\omega:40\) & \texttt{tc+rs} &  & 192 \(\Arrow{.15cm}\) 148 \(\Arrow{.15cm}\) 147 \(\Arrow{.15cm}\) 145 & 25m &  & 183 \(\Arrow{.15cm}\) 147 \(\Arrow{.15cm}\) 146 \(\Arrow{.15cm}\) 141 & 25m &  & 60 \(\Arrow{.15cm}\) 51 \(\Arrow{.15cm}\) 50 \(\Arrow{.15cm}\) 48 & 25m &  & 47 \(\Arrow{.15cm}\) 45 \(\Arrow{.15cm}\) 43 \(\Arrow{.15cm}\) 43 & 25m \\ \hline
                evil-myc23x10 & \(n:230\) & \(m:24072\) & \texttt{tc} & 230 & 201 \(\Arrow{.15cm}\) 144 \(\Arrow{.15cm}\) 138 \(\Arrow{.15cm}\) 132 & 25m & 217 & 192 \(\Arrow{.15cm}\) 157 \(\Arrow{.15cm}\) 149 \(\Arrow{.15cm}\) 140 & 25m & 48 & 40 \(\Arrow{.15cm}\) 36 \(\Arrow{.15cm}\) 36 \(\Arrow{.15cm}\) 36 & 25m & 25 & 25 \(\Arrow{.15cm}\) 24 \(\Arrow{.15cm}\) 24 \(\Arrow{.15cm}\) 24 & 25m \\
                          & \(\rho:0.91\) & \(\omega:20\) & \texttt{tc+rs} &  & 201 \(\Arrow{.15cm}\) 146 \(\Arrow{.15cm}\) 126 \(\Arrow{.15cm}\) 119 & 25m &  & 192 \(\Arrow{.15cm}\) 137 \(\Arrow{.15cm}\) 125 \(\Arrow{.15cm}\) 111 & 25m &  & 40 \(\Arrow{.15cm}\) 36 \(\Arrow{.15cm}\) 36 \(\Arrow{.15cm}\) 34 & 25m &  & 25 \(\Arrow{.15cm}\) 24 \(\Arrow{.15cm}\) 24 \(\Arrow{.15cm}\) 24 & 25m \\ \hline
                evil-chv12x20 & \(n:240\) & \(m:27328\) & \texttt{tc} & 240 & 222 \(\Arrow{.15cm}\) 188 \(\Arrow{.15cm}\) 183 \(\Arrow{.15cm}\) 179 & 25m & 233 & 216 \(\Arrow{.15cm}\) 192 \(\Arrow{.15cm}\) 187 \(\Arrow{.15cm}\) 183 & 25m & 74 & 68 \(\Arrow{.15cm}\) 63 \(\Arrow{.15cm}\) 63 \(\Arrow{.15cm}\) 62 & 25m & 49 & 49 \(\Arrow{.15cm}\) 47 \(\Arrow{.15cm}\) 47 \(\Arrow{.15cm}\) 46 & 25m \\
                          & \(\rho:0.95\) & \(\omega:40\) & \texttt{tc+rs} &  & 216 \(\Arrow{.15cm}\) 184 \(\Arrow{.15cm}\) 184 \(\Arrow{.15cm}\) 184 & 25m &  & 210 \(\Arrow{.15cm}\) 186 \(\Arrow{.15cm}\) 186 \(\Arrow{.15cm}\) 186 & 25m &  & 65 \(\Arrow{.15cm}\) 61 \(\Arrow{.15cm}\) 59 \(\Arrow{.15cm}\) 55 & 25m &  & 49 \(\Arrow{.15cm}\) 47 \(\Arrow{.15cm}\) 47 \(\Arrow{.15cm}\) 46 & 25m \\ \hline
                evil-myc5x48 & \(n:240\) & \(m:27962\) & \texttt{tc} & 240 & 226 \(\Arrow{.15cm}\) 207 \(\Arrow{.15cm}\) 205 \(\Arrow{.15cm}\) 204 & 25m & 236 & 223 \(\Arrow{.15cm}\) 210 \(\Arrow{.15cm}\) 207 \(\Arrow{.15cm}\) 205 & 25m & 120 & 114 \(\Arrow{.15cm}\) 108 \(\Arrow{.15cm}\) 108 \(\Arrow{.15cm}\) 108 & 25m & 100 & 100 \(\Arrow{.15cm}\) 98 \(\Arrow{.15cm}\) 97 \(\Arrow{.15cm}\) \textbf{96} & 15m59s \\
                          & \(\rho:0.97\) & \(\omega:96\) & \texttt{tc+rs} &  & 223 \(\Arrow{.15cm}\) 175 \(\Arrow{.15cm}\) 167 \(\Arrow{.15cm}\) 161 & 25m &  & 214 \(\Arrow{.15cm}\) 177 \(\Arrow{.15cm}\) 166 \(\Arrow{.15cm}\) 160 & 25m &  & 119 \(\Arrow{.15cm}\) 97 \(\Arrow{.15cm}\) \textbf{96} & 2m4s &  & 100 \(\Arrow{.15cm}\) 99 \(\Arrow{.15cm}\) \textbf{96} & 2m32s \\ \hline
                evil-s3m25x10 & \(n:250\) & \(m:29075\) & \texttt{tc} & 250 & 227 \(\Arrow{.15cm}\) 182 \(\Arrow{.15cm}\) 177 \(\Arrow{.15cm}\) 172 & 25m & 238 & 219 \(\Arrow{.15cm}\) 190 \(\Arrow{.15cm}\) 186 \(\Arrow{.15cm}\) 176 & 25m & 81 & 73 \(\Arrow{.15cm}\) 66 \(\Arrow{.15cm}\) 66 \(\Arrow{.15cm}\) 66 & 25m & 50 & 50 \(\Arrow{.15cm}\) 49 \(\Arrow{.15cm}\) 48 \(\Arrow{.15cm}\) 47 & 25m \\
                          & \(\rho:0.93\) & \(\omega:40\) & \texttt{tc+rs} &  & 227 \(\Arrow{.15cm}\) 184 \(\Arrow{.15cm}\) 184 \(\Arrow{.15cm}\) 184 & 25m &  & 219 \(\Arrow{.15cm}\) 181 \(\Arrow{.15cm}\) 175 \(\Arrow{.15cm}\) 175 & 25m &  & 73 \(\Arrow{.15cm}\) 66 \(\Arrow{.15cm}\) 65 \(\Arrow{.15cm}\) 62 & 25m &  & 50 \(\Arrow{.15cm}\) 49 \(\Arrow{.15cm}\) 48 \(\Arrow{.15cm}\) 47 & 25m \\ \hline
                evil-myc11x23 & \(n:253\) & \(m:30422\) & \texttt{tc} & 253 & 232 \(\Arrow{.15cm}\) 199 \(\Arrow{.15cm}\) 195 \(\Arrow{.15cm}\) 190 & 25m & 247 & 226 \(\Arrow{.15cm}\) 203 \(\Arrow{.15cm}\) 198 \(\Arrow{.15cm}\) 194 & 25m & 84 & 76 \(\Arrow{.15cm}\) 71 \(\Arrow{.15cm}\) 71 \(\Arrow{.15cm}\) 71 & 25m & 55 & 55 \(\Arrow{.15cm}\) 54 \(\Arrow{.15cm}\) 54 \(\Arrow{.15cm}\) 53 & 25m \\
                          & \(\rho:0.95\) & \(\omega:46\) & \texttt{tc+rs} &  & 224 \(\Arrow{.15cm}\) 194 \(\Arrow{.15cm}\) 194 \(\Arrow{.15cm}\) 194 & 25m &  & 219 \(\Arrow{.15cm}\) 195 \(\Arrow{.15cm}\) 195 \(\Arrow{.15cm}\) 195 & 25m &  & 74 \(\Arrow{.15cm}\) 68 \(\Arrow{.15cm}\) 66 \(\Arrow{.15cm}\) 62 & 25m &  & 55 \(\Arrow{.15cm}\) 53 \(\Arrow{.15cm}\) 53 \(\Arrow{.15cm}\) 52 & 25m \\ \hline
                evil-myc5x60 & \(n:300\) & \(m:43817\) & \texttt{tc} & 300 & 284 \(\Arrow{.15cm}\) 262 \(\Arrow{.15cm}\) 260 \(\Arrow{.15cm}\) 259 & 25m & 296 & 280 \(\Arrow{.15cm}\) 264 \(\Arrow{.15cm}\) 262 \(\Arrow{.15cm}\) 261 & 25m & 149 & 143 \(\Arrow{.15cm}\) 139 \(\Arrow{.15cm}\) 134 \(\Arrow{.15cm}\) 134 & 25m & 122 & 122 \(\Arrow{.15cm}\) 121 \(\Arrow{.15cm}\) \textbf{120} & 1m31s \\
                          & \(\rho:0.98\) & \(\omega:120\) & \texttt{tc+rs} &  & 300 \(\Arrow{.15cm}\) 249 \(\Arrow{.15cm}\) 244 \(\Arrow{.15cm}\) 244 & 25m &  & 296 \(\Arrow{.15cm}\) 246 \(\Arrow{.15cm}\) 238 \(\Arrow{.15cm}\) 238 & 25m &  & 149 \(\Arrow{.15cm}\) 127 \(\Arrow{.15cm}\) 123 \(\Arrow{.15cm}\) 121 & 25m &  & 122 \(\Arrow{.15cm}\) 122 \(\Arrow{.15cm}\) \textbf{120} & 2m22s \\ \hline
                gen200-p0-9-44 & \(n:200\) & \(m:17910\) & \texttt{tc} & 200 & 164 \(\Arrow{.15cm}\) 116 \(\Arrow{.15cm}\) 112 \(\Arrow{.15cm}\) 108 & 25m & 189 & 156 \(\Arrow{.15cm}\) 121 \(\Arrow{.15cm}\) 116 \(\Arrow{.15cm}\) 111 & 25m & 62 & 53 \(\Arrow{.15cm}\) 48 \(\Arrow{.15cm}\) 48 \(\Arrow{.15cm}\) 47 & 25m & \textbf{44} & \textbf{44} & \textless{}1s \\
                          & \(\rho:0.90\) & \(\omega:44\) & \texttt{tc+rs} &  & 165 \(\Arrow{.15cm}\) 113 \(\Arrow{.15cm}\) 107 \(\Arrow{.15cm}\) 102 & 25m &  & 156 \(\Arrow{.15cm}\) 112 \(\Arrow{.15cm}\) 107 \(\Arrow{.15cm}\) 104 & 25m &  & 53 \(\Arrow{.15cm}\) 48 \(\Arrow{.15cm}\) 48 \(\Arrow{.15cm}\) \textbf{44} & 23m15s &  & \textbf{44} & \textless{}1s \\ \hline
                gen200-p0-9-55 & \(n:200\) & \(m:17910\) & \texttt{tc} & 200 & 164 \(\Arrow{.15cm}\) 117 \(\Arrow{.15cm}\) 113 \(\Arrow{.15cm}\) 108 & 25m & 189 & 155 \(\Arrow{.15cm}\) 120 \(\Arrow{.15cm}\) 116 \(\Arrow{.15cm}\) 111 & 25m & 71 & 60 \(\Arrow{.15cm}\) \textbf{55} & 13s & \textbf{55} & \textbf{55} & \textless{}1s \\
                          & \(\rho:0.90\) & \(\omega:55\) & \texttt{tc+rs} &  & 164 \(\Arrow{.15cm}\) 117 \(\Arrow{.15cm}\) 114 \(\Arrow{.15cm}\) 112 & 25m &  & 155 \(\Arrow{.15cm}\) 115 \(\Arrow{.15cm}\) 115 \(\Arrow{.15cm}\) 112 & 25m &  & 60 \(\Arrow{.15cm}\) \textbf{55} & 13s &  & \textbf{55} & \textless{}1s \\ \hline
                hamming8-2 & \(n:256\) & \(m:31616\) & \texttt{tc} & 256 & 247 \(\Arrow{.15cm}\) 225 \(\Arrow{.15cm}\) 222 \(\Arrow{.15cm}\) 220 & 25m & 248 & 242 \(\Arrow{.15cm}\) 225 \(\Arrow{.15cm}\) 222 \(\Arrow{.15cm}\) 221 & 25m & \textbf{128} & \textbf{128} & \textless{}1s & \textbf{128} & \textbf{128} & \textless{}1s \\
                          & \(\rho:0.97\) & \(\omega:128\) & \texttt{tc+rs} &  & 247 \(\Arrow{.15cm}\) 225 \(\Arrow{.15cm}\) 225 \(\Arrow{.15cm}\) 225 & 25m &  & 242 \(\Arrow{.15cm}\) 220 \(\Arrow{.15cm}\) 220 \(\Arrow{.15cm}\) 220 & 25m &  & \textbf{128} & \textless{}1s &  & \textbf{128} & \textless{}1s \\ \hline
                hamming8-4 & \(n:256\) & \(m:20864\) & \texttt{tc} & 256 & 156 \(\Arrow{.15cm}\) 26 \(\Arrow{.15cm}\) \textbf{16} & 4m11s & 164 & 104 \(\Arrow{.15cm}\) 31 \(\Arrow{.15cm}\) 22 \(\Arrow{.15cm}\) \textbf{16} & 13m23s & 24 & 17 \(\Arrow{.15cm}\) \textbf{16} & 6s & \textbf{16} & - \(\Arrow{.15cm}\) \textbf{16} & 46s \\
                          & \(\rho:0.64\) & \(\omega:16\) & \texttt{tc+rs} &  & 157 \(\Arrow{.15cm}\) 32 \(\Arrow{.15cm}\) 23 \(\Arrow{.15cm}\) 17 & 25m &  & 104 \(\Arrow{.15cm}\) 39 \(\Arrow{.15cm}\) 31 \(\Arrow{.15cm}\) 23 & 25m &  & 17 \(\Arrow{.15cm}\) \textbf{16} & 6s &  & - \(\Arrow{.15cm}\) \textbf{16} & 46s \\ \hline
                p-hat300-2 & \(n:300\) & \(m:21928\) & \texttt{tc} & 300 & 90 \(\Arrow{.15cm}\) 39 \(\Arrow{.15cm}\) 35 \(\Arrow{.15cm}\) 32 & 25m & 209 & 76 \(\Arrow{.15cm}\) 42 \(\Arrow{.15cm}\) 37 \(\Arrow{.15cm}\) 34 & 25m & 42 & 29 \(\Arrow{.15cm}\) \textbf{25} & 3s & 26 & - \(\Arrow{.15cm}\) - \(\Arrow{.15cm}\) - \(\Arrow{.15cm}\) \textbf{25} & 9m45s \\
                          & \(\rho:0.49\) & \(\omega:25\) & \texttt{tc+rs} &  & 92 \(\Arrow{.15cm}\) 29 \(\Arrow{.15cm}\) \textbf{25} & 2m20s &  & 77 \(\Arrow{.15cm}\) 31 \(\Arrow{.15cm}\) \textbf{25} & 3m45s &  & 29 \(\Arrow{.15cm}\) \textbf{25} & 3s &  & - \(\Arrow{.15cm}\) - \(\Arrow{.15cm}\) - \(\Arrow{.15cm}\) \textbf{25} & 9m45s \\ \hline
                p-hat300-3 & \(n:300\) & \(m:33390\) & \texttt{tc} & 300 & 180 \(\Arrow{.15cm}\) 93 \(\Arrow{.15cm}\) 86 \(\Arrow{.15cm}\) 79 & 25m & 258 & 158 \(\Arrow{.15cm}\) 96 \(\Arrow{.15cm}\) 89 \(\Arrow{.15cm}\) 83 & 25m & 69 & 58 \(\Arrow{.15cm}\) 41 \(\Arrow{.15cm}\) 40 \(\Arrow{.15cm}\) 40 & 25m & 41 & - \(\Arrow{.15cm}\) - \(\Arrow{.15cm}\) 41 \(\Arrow{.15cm}\) \textbf{36} & 20m27s \\
                          & \(\rho:0.74\) & \(\omega:36\) & \texttt{tc+rs} &  & 180 \(\Arrow{.15cm}\) 98 \(\Arrow{.15cm}\) 76 \(\Arrow{.15cm}\) 65 & 25m &  & 158 \(\Arrow{.15cm}\) 95 \(\Arrow{.15cm}\) 77 \(\Arrow{.15cm}\) 67 & 25m &  & 59 \(\Arrow{.15cm}\) 41 \(\Arrow{.15cm}\) 39 \(\Arrow{.15cm}\) \textbf{36} & 13m54s &  & - \(\Arrow{.15cm}\) - \(\Arrow{.15cm}\) 41 \(\Arrow{.15cm}\) \textbf{36} & 20m26s \\ \hline
                p-hat500-1 & \(n:500\) & \(m:31569\) & \texttt{tc} & 500 & 79 \(\Arrow{.15cm}\) \textbf{9} & 7s & 205 & 39 \(\Arrow{.15cm}\) \textbf{9} & 6s & 32 & 13 \(\Arrow{.15cm}\) \textbf{9} & 3s & - & - & 25m \\
                          & \(\rho:0.25\) & \(\omega:9\) & \texttt{tc+rs} &  & 170 \(\Arrow{.15cm}\) \textbf{9} & 8s &  & 46 \(\Arrow{.15cm}\) \textbf{9} & 7s &  & 22 \(\Arrow{.15cm}\) \textbf{9} & 4s &  & - & 25m \\ \hline
                san200-0-9-1 & \(n:200\) & \(m:17910\) & \texttt{tc} & 200 & 160 \(\Arrow{.15cm}\) 116 \(\Arrow{.15cm}\) 116 \(\Arrow{.15cm}\) 115 & 25m & 189 & 151 \(\Arrow{.15cm}\) 117 \(\Arrow{.15cm}\) 117 \(\Arrow{.15cm}\) 116 & 25m & 73 & \textbf{70} & \textless{}1s & \textbf{70} & \textbf{70} & \textless{}1s \\
                          & \(\rho:0.90\) & \(\omega:70\) & \texttt{tc+rs} &  & 160 \(\Arrow{.15cm}\) 105 \(\Arrow{.15cm}\) 99 \(\Arrow{.15cm}\) 98 & 25m &  & 151 \(\Arrow{.15cm}\) 105 \(\Arrow{.15cm}\) 100 \(\Arrow{.15cm}\) 99 & 25m &  & \textbf{70} & \textless{}1s &  & \textbf{70} & \textless{}1s \\ \hline
                san200-0-9-2 & \(n:200\) & \(m:17910\) & \texttt{tc} & 200 & 167 \(\Arrow{.15cm}\) 116 \(\Arrow{.15cm}\) 112 \(\Arrow{.15cm}\) 108 & 25m & 189 & 158 \(\Arrow{.15cm}\) 121 \(\Arrow{.15cm}\) 116 \(\Arrow{.15cm}\) 112 & 25m & 75 & 63 \(\Arrow{.15cm}\) \textbf{60} & 8s & \textbf{60} & \textbf{60} & \textless{}1s \\
                          & \(\rho:0.90\) & \(\omega:60\) & \texttt{tc+rs} &  & 167 \(\Arrow{.15cm}\) 123 \(\Arrow{.15cm}\) 123 \(\Arrow{.15cm}\) 118 & 25m &  & 158 \(\Arrow{.15cm}\) 117 \(\Arrow{.15cm}\) 117 \(\Arrow{.15cm}\) 116 & 25m &  & 63 \(\Arrow{.15cm}\) \textbf{60} & 8s &  & \textbf{60} & \textless{}1s \\ \hline
                san200-0-9-3 & \(n:200\) & \(m:17910\) & \texttt{tc} & 200 & 167 \(\Arrow{.15cm}\) 116 \(\Arrow{.15cm}\) 112 \(\Arrow{.15cm}\) 107 & 25m & 188 & 157 \(\Arrow{.15cm}\) 122 \(\Arrow{.15cm}\) 117 \(\Arrow{.15cm}\) 112 & 25m & 64 & 54 \(\Arrow{.15cm}\) 48 \(\Arrow{.15cm}\) 47 \(\Arrow{.15cm}\) 47 & 25m & \textbf{44} & - \(\Arrow{.15cm}\) \textbf{44} & 1s \\
                          & \(\rho:0.90\) & \(\omega:44\) & \texttt{tc+rs} &  & 167 \(\Arrow{.15cm}\) 125 \(\Arrow{.15cm}\) 123 \(\Arrow{.15cm}\) 119 & 25m &  & 157 \(\Arrow{.15cm}\) 117 \(\Arrow{.15cm}\) 117 \(\Arrow{.15cm}\) 116 & 25m &  & 54 \(\Arrow{.15cm}\) 48 \(\Arrow{.15cm}\) 47 \(\Arrow{.15cm}\) 45 & 25m &  & - \(\Arrow{.15cm}\) \textbf{44} & 1s \\ \hline
                san400-0-5-1 & \(n:400\) & \(m:39900\) & \texttt{tc} & 400 & 182 \(\Arrow{.15cm}\) 93 \(\Arrow{.15cm}\) 75 \(\Arrow{.15cm}\) 59 & 25m & 226 & 153 \(\Arrow{.15cm}\) 94 \(\Arrow{.15cm}\) 84 \(\Arrow{.15cm}\) 65 & 25m & 21 & \textbf{13} & \textless{}1s & - & - & 25m \\
                          & \(\rho:0.50\) & \(\omega:13\) & \texttt{tc+rs} &  & 183 \(\Arrow{.15cm}\) 93 \(\Arrow{.15cm}\) 37 \(\Arrow{.15cm}\) 16 & 25m &  & 154 \(\Arrow{.15cm}\) 89 \(\Arrow{.15cm}\) 65 \(\Arrow{.15cm}\) 58 & 25m &  & \textbf{13} & \textless{}1s &  & - & 25m \\ \hline
                sanr200-0-9 & \(n:200\) & \(m:17863\) & \texttt{tc} & 200 & 163 \(\Arrow{.15cm}\) 113 \(\Arrow{.15cm}\) 108 \(\Arrow{.15cm}\) 104 & 25m & 189 & 155 \(\Arrow{.15cm}\) 118 \(\Arrow{.15cm}\) 113 \(\Arrow{.15cm}\) 109 & 25m & 74 & 65 \(\Arrow{.15cm}\) 57 \(\Arrow{.15cm}\) 56 \(\Arrow{.15cm}\) 56 & 25m & 49 & 49 \(\Arrow{.15cm}\) 46 \(\Arrow{.15cm}\) 45 \(\Arrow{.15cm}\) \textbf{42} & 23m38s \\
                          & \(\rho:0.90\) & \(\omega:42\) & \texttt{tc+rs} &  & 164 \(\Arrow{.15cm}\) 118 \(\Arrow{.15cm}\) 112 \(\Arrow{.15cm}\) 110 & 25m &  & 155 \(\Arrow{.15cm}\) 115 \(\Arrow{.15cm}\) 114 \(\Arrow{.15cm}\) 112 & 25m &  & 65 \(\Arrow{.15cm}\) 56 \(\Arrow{.15cm}\) 54 \(\Arrow{.15cm}\) 50 & 25m &  & 49 \(\Arrow{.15cm}\) 46 \(\Arrow{.15cm}\) 45 \(\Arrow{.15cm}\) \textbf{42} & 23m36s \\ \hline
                sanr400-0-5 & \(n:400\) & \(m:39984\) & \texttt{tc} & 400 & 176 \(\Arrow{.15cm}\) 35 \(\Arrow{.15cm}\) 19 \(\Arrow{.15cm}\) \textbf{13} & 13m43s & 234 & 107 \(\Arrow{.15cm}\) 35 \(\Arrow{.15cm}\) 22 \(\Arrow{.15cm}\) 15 & 25m & 56 & 46 \(\Arrow{.15cm}\) 16 \(\Arrow{.15cm}\) 14 \(\Arrow{.15cm}\) \textbf{13} & 5m26s & - & - & 25m \\
                          & \(\rho:0.50\) & \(\omega:13\) & \texttt{tc+rs} &  & 177 \(\Arrow{.15cm}\) 35 \(\Arrow{.15cm}\) 20 \(\Arrow{.15cm}\) 18 & 25m &  & 108 \(\Arrow{.15cm}\) 35 \(\Arrow{.15cm}\) 25 \(\Arrow{.15cm}\) 24 & 25m &  & 50 \(\Arrow{.15cm}\) 16 \(\Arrow{.15cm}\) \textbf{13} & 4m19s &  & - & 25m \\ \hline
            \end{tabular}
        \end{adjustbox}
    }
    \caption{Upper-bound progress on medium graph instances with more than \(15000\) and fewer than \(45000\) edges.}
    \label{tab:medium}
\end{table}

%% file: first_minute_progress_plots.tex
\begin{figure}[ht]
    \centering
    {\scriptsize
        \setlength{\tabcolsep}{5pt}
        \begin{tabular}{c c}
            \texttt{tc} & \texttt{tc+rs} \\
            \tikz[baseline=-0.5ex]\draw[black, solid, line width=0.8pt] (0,0) -- (0.55cm,0); & \tikz[baseline=-0.5ex]\draw[black, dashed, line width=0.8pt] (0,0) -- (0.55cm,0); \\
        \end{tabular}
        \par\vspace{0.4em}
    }
    \begin{tikzpicture}
        \begin{groupplot}[
            group style={group size=2 by 2, horizontal sep=1.0cm, vertical sep=1.45cm},
            width=0.47\textwidth,
            height=0.31\textwidth,
            xmin=0, xmax=60,
            ymin=0,
            ymax=950,
            xtick={0,5,10,15,20,25,30,35,40,45,50,55,60},
            grid=major,
            xlabel={},
            tick label style={font=\scriptsize},
            xticklabel style={align=center},
            label style={font=\small},
            title style={font=\small},
        ]
            \nextgroupplot[
                title={Small: \texttt{trivial}},
                xticklabels={{\shortstack{0\\{\tiny 0}}},{\shortstack{5\\{\tiny 4.1}}},{\shortstack{10\\{\tiny 4.6}}},{\shortstack{15\\{\tiny 4.8}}},{\shortstack{20\\{\tiny 5}}},{\shortstack{25\\{\tiny 5.1}}},{\shortstack{30\\{\tiny 5.2}}},{\shortstack{35\\{\tiny 5.2}}},{\shortstack{40\\{\tiny 5.4}}},{\shortstack{45\\{\tiny 5.5}}},{\shortstack{50\\{\tiny 5.5}}},{\shortstack{55\\{\tiny 5.5}}},{\shortstack{60\\{\tiny 5.5}}}},
            ]
                \addplot[color=blue, solid, mark=none, line width=0.8pt] coordinates {
                    (0,856.45)
                    (1,201.76)
                    (2,162.03)
                    (3,143.13)
                    (4,131.11)
                    (5,122.91)
                    (6,118.04)
                    (7,114.6)
                    (8,111.6)
                    (9,109.12)
                    (10,106.63)
                    (11,105.49)
                    (12,103.95)
                    (13,102.78)
                    (14,101.59)
                    (15,100.42)
                    (16,99.81)
                    (17,99.04)
                    (18,98.59)
                    (19,97.86)
                    (20,97.6)
                    (21,97.04)
                    (22,96.02)
                    (23,95.3)
                    (24,94.33)
                    (25,93.95)
                    (26,93.74)
                    (27,93.39)
                    (28,93.06)
                    (29,92.72)
                    (30,92.47)
                    (31,92.2)
                    (32,92)
                    (33,91.48)
                    (34,91.48)
                    (35,91.34)
                    (36,91.25)
                    (37,90.76)
                    (38,90.45)
                    (39,90.3)
                    (40,90.21)
                    (41,89.94)
                    (42,89.94)
                    (43,89.67)
                    (44,89.67)
                    (45,89.26)
                    (46,88.78)
                    (47,88.63)
                    (48,88.33)
                    (49,88.33)
                    (50,88.33)
                    (51,88.17)
                    (52,87.86)
                    (53,87.75)
                    (54,87.51)
                    (55,87.37)
                    (56,87.02)
                    (57,86.88)
                    (58,86.5)
                    (59,86.5)
                    (60,86.2)
                };
                \addplot[color=blue, only marks, mark=*, mark size=1.7pt, mark options={fill=blue, draw=blue}] coordinates {
                    (0,856.45)
                };
                \addplot[color=blue, dashed, mark=none, line width=0.8pt] coordinates {
                    (0,856.45)
                    (1,151.61)
                    (2,116.73)
                    (3,97.85)
                    (4,84.38)
                    (5,74.32)
                    (6,67.48)
                    (7,61.84)
                    (8,57.83)
                    (9,53.81)
                    (10,51.08)
                    (11,47.95)
                    (12,46.92)
                    (13,45.76)
                    (14,44.98)
                    (15,43.75)
                    (16,43.01)
                    (17,41.5)
                    (18,40.67)
                    (19,39.77)
                    (20,39.28)
                    (21,39.01)
                    (22,38.4)
                    (23,38.03)
                    (24,37.76)
                    (25,37.05)
                    (26,36.23)
                    (27,35.88)
                    (28,35.19)
                    (29,34.9)
                    (30,34.62)
                    (31,34.26)
                    (32,34.26)
                    (33,34.06)
                    (34,33.56)
                    (35,33.03)
                    (36,32.89)
                    (37,32.77)
                    (38,32.54)
                    (39,32.4)
                    (40,32.06)
                    (41,31.99)
                    (42,31.73)
                    (43,31.5)
                    (44,31.23)
                    (45,30.96)
                    (46,30.79)
                    (47,30.79)
                    (48,30.65)
                    (49,30.26)
                    (50,30.14)
                    (51,30)
                    (52,29.86)
                    (53,29.86)
                    (54,29.44)
                    (55,29.3)
                    (56,29.22)
                    (57,29.05)
                    (58,29.05)
                    (59,28.93)
                    (60,28.67)
                };
                \addplot[color=blue, only marks, mark=*, mark size=1.7pt, mark options={fill=blue, draw=blue}] coordinates {
                    (0,856.45)
                };

            \nextgroupplot[
                title={Small: \texttt{degree+density}},
                xticklabels={{\shortstack{0\\{\tiny 0}}},{\shortstack{5\\{\tiny 2.5}}},{\shortstack{10\\{\tiny 3}}},{\shortstack{15\\{\tiny 3.2}}},{\shortstack{20\\{\tiny 3.3}}},{\shortstack{25\\{\tiny 3.5}}},{\shortstack{30\\{\tiny 3.6}}},{\shortstack{35\\{\tiny 3.7}}},{\shortstack{40\\{\tiny 3.7}}},{\shortstack{45\\{\tiny 3.7}}},{\shortstack{50\\{\tiny 3.8}}},{\shortstack{55\\{\tiny 3.9}}},{\shortstack{60\\{\tiny 3.9}}}},
            ]
                \addplot[color=red, solid, mark=none, line width=0.8pt] coordinates {
                    (0,461.23)
                    (1,173.4)
                    (2,150.9)
                    (3,139.65)
                    (4,132.76)
                    (5,128.74)
                    (6,125.47)
                    (7,123.3)
                    (8,121.02)
                    (9,118.99)
                    (10,117.62)
                    (11,116.46)
                    (12,115.09)
                    (13,114.06)
                    (14,113.52)
                    (15,112.54)
                    (16,112.17)
                    (17,110.95)
                    (18,110.82)
                    (19,110.5)
                    (20,109.66)
                    (21,109.2)
                    (22,108.93)
                    (23,107.98)
                    (24,107.48)
                    (25,106.84)
                    (26,106.45)
                    (27,106.11)
                    (28,105.7)
                    (29,105.26)
                    (30,104.81)
                    (31,104.46)
                    (32,104.32)
                    (33,104.1)
                    (34,103.22)
                    (35,103.22)
                    (36,102.54)
                    (37,102.18)
                    (38,101.94)
                    (39,101.64)
                    (40,101.49)
                    (41,101.49)
                    (42,101.2)
                    (43,100.75)
                    (44,100.6)
                    (45,100.6)
                    (46,100.22)
                    (47,100.22)
                    (48,99.94)
                    (49,99.85)
                    (50,99.34)
                    (51,99.34)
                    (52,99.34)
                    (53,98.84)
                    (54,98.53)
                    (55,98.32)
                    (56,98.11)
                    (57,97.8)
                    (58,97.58)
                    (59,97.43)
                    (60,97.29)
                };
                \addplot[color=red, only marks, mark=*, mark size=1.7pt, mark options={fill=red, draw=red}] coordinates {
                    (0,461.23)
                };
                \addplot[color=red, dashed, mark=none, line width=0.8pt] coordinates {
                    (0,461.23)
                    (1,123.87)
                    (2,99.72)
                    (3,85.74)
                    (4,76.55)
                    (5,70.89)
                    (6,65.59)
                    (7,60.76)
                    (8,56.11)
                    (9,52.47)
                    (10,51.09)
                    (11,49.13)
                    (12,47.85)
                    (13,47.09)
                    (14,45.92)
                    (15,45.21)
                    (16,44.6)
                    (17,43.8)
                    (18,42.7)
                    (19,42.58)
                    (20,41.81)
                    (21,41.09)
                    (22,40.66)
                    (23,40.37)
                    (24,40.25)
                    (25,39.47)
                    (26,39.27)
                    (27,39.01)
                    (28,38.29)
                    (29,37.84)
                    (30,37.84)
                    (31,37.6)
                    (32,37.07)
                    (33,36.72)
                    (34,36.72)
                    (35,36.56)
                    (36,36.21)
                    (37,36.06)
                    (38,36.06)
                    (39,35.43)
                    (40,35.23)
                    (41,35.15)
                    (42,35.03)
                    (43,35.03)
                    (44,35.03)
                    (45,35.03)
                    (46,34.65)
                    (47,34.42)
                    (48,34.42)
                    (49,34.42)
                    (50,34.42)
                    (51,34.27)
                    (52,33.94)
                    (53,33.8)
                    (54,33.8)
                    (55,33.8)
                    (56,33.8)
                    (57,33.72)
                    (58,33.32)
                    (59,33.32)
                    (60,33.21)
                };
                \addplot[color=red, only marks, mark=*, mark size=1.7pt, mark options={fill=red, draw=red}] coordinates {
                    (0,461.23)
                };

            \nextgroupplot[
                title={Medium: \texttt{trivial}},
                xticklabels={{\shortstack{0\\{\tiny 0}}},{\shortstack{5\\{\tiny 0.6}}},{\shortstack{10\\{\tiny 1.8}}},{\shortstack{15\\{\tiny 2.5}}},{\shortstack{20\\{\tiny 3.2}}},{\shortstack{25\\{\tiny 3.6}}},{\shortstack{30\\{\tiny 4}}},{\shortstack{35\\{\tiny 4.2}}},{\shortstack{40\\{\tiny 4.4}}},{\shortstack{45\\{\tiny 4.5}}},{\shortstack{50\\{\tiny 4.6}}},{\shortstack{55\\{\tiny 4.7}}},{\shortstack{60\\{\tiny 4.8}}}},
            ]
                \addplot[color=blue, solid, mark=none, line width=0.8pt] coordinates {
                    (0,893.94)
                    (1,441.34)
                    (2,412.1)
                    (3,391.3)
                    (4,374.94)
                    (5,359.71)
                    (6,347.4)
                    (7,338.45)
                    (8,331.21)
                    (9,324.04)
                    (10,317.33)
                    (11,311.27)
                    (12,306.02)
                    (13,301.34)
                    (14,297.18)
                    (15,292.86)
                    (16,288.91)
                    (17,285.65)
                    (18,281.96)
                    (19,278.78)
                    (20,275.64)
                    (21,272.51)
                    (22,269.72)
                    (23,267.64)
                    (24,265.17)
                    (25,262.79)
                    (26,259.94)
                    (27,257.77)
                    (28,255.18)
                    (29,253.42)
                    (30,251.35)
                    (31,249.58)
                    (32,247.52)
                    (33,245.81)
                    (34,244.01)
                    (35,242.62)
                    (36,240.87)
                    (37,239.47)
                    (38,238.26)
                    (39,237.21)
                    (40,236.01)
                    (41,235)
                    (42,233.82)
                    (43,232.63)
                    (44,231.42)
                    (45,230.53)
                    (46,229.66)
                    (47,229.19)
                    (48,228.29)
                    (49,227.54)
                    (50,226.81)
                    (51,226.45)
                    (52,225.91)
                    (53,225.7)
                    (54,225.23)
                    (55,224.77)
                    (56,224.51)
                    (57,224.27)
                    (58,223.94)
                    (59,223.94)
                    (60,223.83)
                };
                \addplot[color=blue, only marks, mark=*, mark size=1.7pt, mark options={fill=blue, draw=blue}] coordinates {
                    (0,893.94)
                };
                \addplot[color=blue, dashed, mark=none, line width=0.8pt] coordinates {
                    (0,893.94)
                    (1,420.55)
                    (2,373.66)
                    (3,357.35)
                    (4,341.89)
                    (5,326.84)
                    (6,315.13)
                    (7,306.49)
                    (8,298.55)
                    (9,292)
                    (10,285.55)
                    (11,279.92)
                    (12,274.35)
                    (13,268.9)
                    (14,264.88)
                    (15,260.24)
                    (16,256.7)
                    (17,253.17)
                    (18,249.54)
                    (19,246.46)
                    (20,243.09)
                    (21,239.76)
                    (22,236.69)
                    (23,233.74)
                    (24,231.13)
                    (25,228.25)
                    (26,225.61)
                    (27,222.91)
                    (28,220.82)
                    (29,218.52)
                    (30,216.49)
                    (31,214.66)
                    (32,212.9)
                    (33,210.99)
                    (34,209.36)
                    (35,207.82)
                    (36,206.09)
                    (37,204.43)
                    (38,203.06)
                    (39,201.61)
                    (40,200.33)
                    (41,199.15)
                    (42,197.61)
                    (43,196.61)
                    (44,195.56)
                    (45,194.49)
                    (46,193.39)
                    (47,192.89)
                    (48,191.83)
                    (49,191.14)
                    (50,190.77)
                    (51,190.2)
                    (52,189.3)
                    (53,188.99)
                    (54,188.51)
                    (55,188.25)
                    (56,187.99)
                    (57,187.69)
                    (58,186.89)
                    (59,186.59)
                    (60,186.5)
                };
                \addplot[color=blue, only marks, mark=*, mark size=1.7pt, mark options={fill=blue, draw=blue}] coordinates {
                    (0,893.94)
                };

            \nextgroupplot[
                title={Medium: \texttt{degree+density}},
                xticklabels={{\shortstack{0\\{\tiny 0}}},{\shortstack{5\\{\tiny 0.3}}},{\shortstack{10\\{\tiny 1}}},{\shortstack{15\\{\tiny 1.4}}},{\shortstack{20\\{\tiny 1.6}}},{\shortstack{25\\{\tiny 1.8}}},{\shortstack{30\\{\tiny 1.9}}},{\shortstack{35\\{\tiny 2.1}}},{\shortstack{40\\{\tiny 2.2}}},{\shortstack{45\\{\tiny 2.2}}},{\shortstack{50\\{\tiny 2.3}}},{\shortstack{55\\{\tiny 2.4}}},{\shortstack{60\\{\tiny 2.5}}}},
            ]
                \addplot[color=red, solid, mark=none, line width=0.8pt] coordinates {
                    (0,611.99)
                    (1,367.9)
                    (2,346.84)
                    (3,332.69)
                    (4,321.97)
                    (5,313.77)
                    (6,305.42)
                    (7,298.55)
                    (8,292.7)
                    (9,287.17)
                    (10,283.17)
                    (11,279.67)
                    (12,276.22)
                    (13,273.81)
                    (14,271)
                    (15,268.57)
                    (16,266.53)
                    (17,264.48)
                    (18,262.49)
                    (19,260.91)
                    (20,258.87)
                    (21,257.28)
                    (22,256.31)
                    (23,254.97)
                    (24,253.74)
                    (25,252.5)
                    (26,251.69)
                    (27,251.14)
                    (28,249.92)
                    (29,248.94)
                    (30,248.49)
                    (31,247.84)
                    (32,246.82)
                    (33,246.65)
                    (34,245.73)
                    (35,245.13)
                    (36,244.7)
                    (37,243.93)
                    (38,243.45)
                    (39,242.97)
                    (40,242.48)
                    (41,242.48)
                    (42,241.95)
                    (43,241.46)
                    (44,241.22)
                    (45,241.22)
                    (46,240.64)
                    (47,240.03)
                    (48,239.52)
                    (49,239.33)
                    (50,238.9)
                    (51,238.21)
                    (52,238.02)
                    (53,238.02)
                    (54,237.88)
                    (55,237.43)
                    (56,237.43)
                    (57,237.28)
                    (58,236.89)
                    (59,236.6)
                    (60,236.27)
                };
                \addplot[color=red, only marks, mark=*, mark size=1.7pt, mark options={fill=red, draw=red}] coordinates {
                    (0,611.99)
                };
                \addplot[color=red, dashed, mark=none, line width=0.8pt] coordinates {
                    (0,611.99)
                    (1,325.57)
                    (2,310.77)
                    (3,296.81)
                    (4,285.7)
                    (5,276.83)
                    (6,269.88)
                    (7,263.16)
                    (8,257.2)
                    (9,252.94)
                    (10,248.73)
                    (11,244.82)
                    (12,240.78)
                    (13,236.83)
                    (14,233.07)
                    (15,229.36)
                    (16,226.11)
                    (17,222.78)
                    (18,220.02)
                    (19,217.16)
                    (20,214.03)
                    (21,211.56)
                    (22,208.92)
                    (23,207.88)
                    (24,206.33)
                    (25,204.91)
                    (26,203.43)
                    (27,202.33)
                    (28,201.26)
                    (29,200.11)
                    (30,198.84)
                    (31,197.92)
                    (32,197.13)
                    (33,196.13)
                    (34,195.4)
                    (35,194.3)
                    (36,193.8)
                    (37,193.25)
                    (38,192.44)
                    (39,191.88)
                    (40,191.3)
                    (41,190.49)
                    (42,190.2)
                    (43,189.7)
                    (44,189.44)
                    (45,188.68)
                    (46,188.18)
                    (47,187.44)
                    (48,187.09)
                    (49,186.52)
                    (50,186.3)
                    (51,185.62)
                    (52,185.27)
                    (53,184.82)
                    (54,184.48)
                    (55,183.93)
                    (56,183.85)
                    (57,183.21)
                    (58,182.79)
                    (59,182.29)
                    (60,182.03)
                };
                \addplot[color=red, only marks, mark=*, mark size=1.7pt, mark options={fill=red, draw=red}] coordinates {
                    (0,611.99)
                };

        \end{groupplot}
        \path (group c1r1.north west) -- node[pos=0.5, xshift=-1.0cm, rotate=90, anchor=center] {Average gap above optimum (\%)} (group c1r2.south west);
        \path (group c1r2.south west) -- node[pos=0.5, below=1.05cm, anchor=north] {Time (s); lower row: average \(d\) for \texttt{tc}} (group c2r2.south east);
    \end{tikzpicture}
    \caption{Average upper-bound gap during the first minute for \texttt{trivial} and \texttt{degree+density} on small and medium instances. Solid and dashed lines correspond to \texttt{tc} and \texttt{tc+rs}, respectively. In each \(x\)-axis tick, the upper entry gives time in seconds and the lower entry gives the average current \(d\) for \texttt{tc}.}
    \label{plot:minute_progress}
\end{figure}

%% file: heatmap_comparison.tex
\begin{figure}[ht]
    \centering
    \begin{tikzpicture}[
        celltext/.style={font=\scriptsize, text=black, fill=white, fill opacity=0.86, text opacity=1, inner sep=1.1pt},
        rowlabel/.style={font=\small, anchor=east},
        collabel/.style={font=\scriptsize, rotate=45, anchor=west},
        heatcell/.style={draw=white, line width=0.45pt, rounded corners=1.7pt},
        ratiocell/.style={draw=white, line width=0.45pt, rounded corners=1.7pt, fill=white},
    ]
        \node[collabel] at (2.12,-0.42) {\([0.0,0.1)\)};
        \node[collabel] at (3.20,-0.42) {\([0.1,0.2)\)};
        \node[collabel] at (4.28,-0.42) {\([0.2,0.3)\)};
        \node[collabel] at (5.36,-0.42) {\([0.3,0.4)\)};
        \node[collabel] at (6.44,-0.42) {\([0.4,0.5)\)};
        \node[collabel] at (7.52,-0.42) {\([0.5,0.6)\)};
        \node[collabel] at (8.60,-0.42) {\([0.6,0.7)\)};
        \node[collabel] at (9.68,-0.42) {\([0.7,0.8)\)};
        \node[collabel] at (10.76,-0.42) {\([0.8,0.9)\)};
        \node[collabel] at (11.84,-0.42) {\([0.9,1.0]\)};
        \node[rowlabel] at (1.84,-1.08) {Small};
        \filldraw[heatcell, fill=blue!90!white] (2.00,-0.72) rectangle (3.08,-1.44);
        \node[celltext] at (2.54,-1.08) {3/3};
        \filldraw[heatcell, fill=blue!90!white] (3.08,-0.72) rectangle (4.16,-1.44);
        \node[celltext] at (3.62,-1.08) {1/1};
        \filldraw[heatcell, fill=blue!90!white] (4.16,-0.72) rectangle (5.24,-1.44);
        \node[celltext] at (4.70,-1.08) {1/1};
        \filldraw[heatcell, fill=blue!90!white] (5.24,-0.72) rectangle (6.32,-1.44);
        \node[celltext] at (5.78,-1.08) {2/2};
        \filldraw[heatcell, fill=blue!90!white] (6.32,-0.72) rectangle (7.40,-1.44);
        \node[celltext] at (6.86,-1.08) {2/2};
        \filldraw[heatcell, fill=blue!90!white] (7.40,-0.72) rectangle (8.48,-1.44);
        \node[celltext] at (7.94,-1.08) {1/1};
        \filldraw[heatcell, fill=blue!90!white] (8.48,-0.72) rectangle (9.56,-1.44);
        \node[celltext] at (9.02,-1.08) {4/4};
        \filldraw[heatcell, fill=blue!67!white] (9.56,-0.72) rectangle (10.64,-1.44);
        \node[celltext] at (10.10,-1.08) {5/7};
        \filldraw[heatcell, fill=blue!40!white] (10.64,-0.72) rectangle (11.72,-1.44);
        \node[celltext] at (11.18,-1.08) {3/8};
        \filldraw[heatcell, fill=blue!61!white] (11.72,-0.72) rectangle (12.80,-1.44);
        \node[celltext] at (12.26,-1.08) {7/11};
        \node[font=\scriptsize, text=black] at (13.32,-1.08) {29/40};
        \node[rowlabel] at (1.84,-1.80) {Medium};
        \filldraw[heatcell, fill=gray!15] (2.00,-1.44) rectangle (3.08,-2.16);
        \node[celltext] at (2.54,-1.80) {0/0};
        \filldraw[heatcell, fill=blue!90!white] (3.08,-1.44) rectangle (4.16,-2.16);
        \node[celltext] at (3.62,-1.80) {1/1};
        \filldraw[heatcell, fill=blue!90!white] (4.16,-1.44) rectangle (5.24,-2.16);
        \node[celltext] at (4.70,-1.80) {1/1};
        \filldraw[heatcell, fill=gray!15] (5.24,-1.44) rectangle (6.32,-2.16);
        \node[celltext] at (5.78,-1.80) {0/0};
        \filldraw[heatcell, fill=blue!90!white] (6.32,-1.44) rectangle (7.40,-2.16);
        \node[celltext] at (6.86,-1.80) {2/2};
        \filldraw[heatcell, fill=blue!90!white] (7.40,-1.44) rectangle (8.48,-2.16);
        \node[celltext] at (7.94,-1.80) {2/2};
        \filldraw[heatcell, fill=blue!90!white] (8.48,-1.44) rectangle (9.56,-2.16);
        \node[celltext] at (9.02,-1.80) {1/1};
        \filldraw[heatcell, fill=blue!90!white] (9.56,-1.44) rectangle (10.64,-2.16);
        \node[celltext] at (10.10,-1.80) {1/1};
        \filldraw[heatcell, fill=blue!10!white] (10.64,-1.44) rectangle (11.72,-2.16);
        \node[celltext] at (11.18,-1.80) {0/4};
        \filldraw[heatcell, fill=blue!25!white] (11.72,-1.44) rectangle (12.80,-2.16);
        \node[celltext] at (12.26,-1.80) {4/21};
        \node[font=\scriptsize, text=black] at (13.32,-1.80) {12/33};
        \node[rowlabel] at (1.84,-2.52) {Combined};
        \filldraw[heatcell, fill=blue!90!white] (2.00,-2.16) rectangle (3.08,-2.88);
        \node[celltext] at (2.54,-2.52) {3/3};
        \filldraw[heatcell, fill=blue!90!white] (3.08,-2.16) rectangle (4.16,-2.88);
        \node[celltext] at (3.62,-2.52) {2/2};
        \filldraw[heatcell, fill=blue!90!white] (4.16,-2.16) rectangle (5.24,-2.88);
        \node[celltext] at (4.70,-2.52) {2/2};
        \filldraw[heatcell, fill=blue!90!white] (5.24,-2.16) rectangle (6.32,-2.88);
        \node[celltext] at (5.78,-2.52) {2/2};
        \filldraw[heatcell, fill=blue!90!white] (6.32,-2.16) rectangle (7.40,-2.88);
        \node[celltext] at (6.86,-2.52) {4/4};
        \filldraw[heatcell, fill=blue!90!white] (7.40,-2.16) rectangle (8.48,-2.88);
        \node[celltext] at (7.94,-2.52) {3/3};
        \filldraw[heatcell, fill=blue!90!white] (8.48,-2.16) rectangle (9.56,-2.88);
        \node[celltext] at (9.02,-2.52) {5/5};
        \filldraw[heatcell, fill=blue!70!white] (9.56,-2.16) rectangle (10.64,-2.88);
        \node[celltext] at (10.10,-2.52) {6/8};
        \filldraw[heatcell, fill=blue!30!white] (10.64,-2.16) rectangle (11.72,-2.88);
        \node[celltext] at (11.18,-2.52) {3/12};
        \filldraw[heatcell, fill=blue!38!white] (11.72,-2.16) rectangle (12.80,-2.88);
        \node[celltext] at (12.26,-2.52) {11/32};
        \node[font=\scriptsize, text=black] at (13.32,-2.52) {41/73};
        \node[rowlabel] at (1.84,-3.24) {Avg. speedup};
        \filldraw[ratiocell] (2.00,-2.88) rectangle (3.08,-3.60);
        \node[celltext] at (2.54,-3.24) {14,456x};
        \filldraw[ratiocell] (3.08,-2.88) rectangle (4.16,-3.60);
        \node[celltext] at (3.62,-3.24) {112,574x};
        \filldraw[ratiocell] (4.16,-2.88) rectangle (5.24,-3.60);
        \node[celltext] at (4.70,-3.24) {\(>\)354x};
        \filldraw[ratiocell] (5.24,-2.88) rectangle (6.32,-3.60);
        \node[celltext] at (5.78,-3.24) {90x};
        \filldraw[ratiocell] (6.32,-2.88) rectangle (7.40,-3.60);
        \node[celltext] at (6.86,-3.24) {150x};
        \filldraw[ratiocell] (7.40,-2.88) rectangle (8.48,-3.60);
        \node[celltext] at (7.94,-3.24) {\(>\)695x};
        \filldraw[ratiocell] (8.48,-2.88) rectangle (9.56,-3.60);
        \node[celltext] at (9.02,-3.24) {6.1x};
        \filldraw[ratiocell] (9.56,-2.88) rectangle (10.64,-3.60);
        \node[celltext] at (10.10,-3.24) {24x};
        \filldraw[ratiocell] (10.64,-2.88) rectangle (11.72,-3.60);
        \node[celltext] at (11.18,-3.24) {9.7x};
        \filldraw[ratiocell] (11.72,-2.88) rectangle (12.80,-3.60);
        \node[celltext] at (12.26,-3.24) {42x};
        \node[rowlabel] at (1.84,-3.96) {Avg. slowdown};
        \filldraw[ratiocell] (2.00,-3.60) rectangle (3.08,-4.32);
        \node[celltext] at (2.54,-3.96) {-};
        \filldraw[ratiocell] (3.08,-3.60) rectangle (4.16,-4.32);
        \node[celltext] at (3.62,-3.96) {-};
        \filldraw[ratiocell] (4.16,-3.60) rectangle (5.24,-4.32);
        \node[celltext] at (4.70,-3.96) {-};
        \filldraw[ratiocell] (5.24,-3.60) rectangle (6.32,-4.32);
        \node[celltext] at (5.78,-3.96) {-};
        \filldraw[ratiocell] (6.32,-3.60) rectangle (7.40,-4.32);
        \node[celltext] at (6.86,-3.96) {-};
        \filldraw[ratiocell] (7.40,-3.60) rectangle (8.48,-4.32);
        \node[celltext] at (7.94,-3.96) {-};
        \filldraw[ratiocell] (8.48,-3.60) rectangle (9.56,-4.32);
        \node[celltext] at (9.02,-3.96) {-};
        \filldraw[ratiocell] (9.56,-3.60) rectangle (10.64,-4.32);
        \node[celltext] at (10.10,-3.96) {1,181x};
        \filldraw[ratiocell] (10.64,-3.60) rectangle (11.72,-4.32);
        \node[celltext] at (11.18,-3.96) {\(>\)756x};
        \filldraw[ratiocell] (11.72,-3.60) rectangle (12.80,-4.32);
        \node[celltext] at (12.26,-3.96) {\(>\)1,299x};
        \filldraw[heatcell, fill=blue!10!white] (2.00,-4.64) rectangle (2.55,-4.84);
        \filldraw[heatcell, fill=blue!90!white] (2.65,-4.64) rectangle (3.20,-4.84);
        \node[anchor=west, font=\scriptsize] at (3.34,-4.74) {larger fraction of wins};
    \end{tikzpicture}
    \caption{Comparison of \texttt{dsatur} with \texttt{tc+rs} against direct computation of the initial \texttt{sdp} bound, grouped by edge-density bucket. The bottom rows report mean speedup and slowdown factors. For speedups, a leading \textgreater{} means that the \texttt{sdp} run did not finish within the time limit, so the reported speedup is only a lower bound. For slowdowns, it means that \texttt{dsatur} with \texttt{tc+rs} did not reach the target bound within the time limit, so the reported slowdown is only a lower bound.}
    \label{plot:heatmap}
\end{figure}
